\font\tencyr=wncyr10 
\font\tencyi=wncyi10 
\font\tencysc=wncysc10 
\def\rus{\tencyr\cyracc}
\def\rusi{\tencyi\cyracc}
\def\rusc{\tencysc\cyracc}
\newtheorem{thm}{Theorem}
\newtheorem{lm}[thm]{Lemma} 
\newtheorem{cl}[thm]{Corollary}
\newtheorem{prop}[thm]{Proposition}
\theoremstyle{remark}
\newtheorem{ex}{Example}
\newtheorem{rmk}{Remark}
\newtheorem{qn}{Question}
\theoremstyle{definition}
\newtheorem{df}{Definition}
\renewcommand{\iff}{if and only if }
\newcommand{\eus}{\EuScript}
\newcommand{\gt}{\mathfrak}
\newcommand{\ff}{\mathbb F}
\newcommand{\GL}{{\rm GL}}
\newcommand{\ind}{{\rm ind\,}}
\newcommand{\codim}{\mathrm{codim\,}}
\newcommand{\rk}{\mathrm{rk\,}}
\newcommand{\Lie}{\mathrm{Lie\,}}
\newcommand{\Hom}{\mathrm{Hom}}
\renewcommand{\Im}{{\rm Im\,}}
\newcommand{\ad}{\mathrm{ad\,}}
\newcommand{\Ad}{{\rm Ad}}
\newcommand{\g}{\mathfrak g}
\newcommand {\cN}{{\mathcal N}}
\newcommand {\cS}{{\mathcal S}}
\newcommand{\esi}{\varepsilon}
\newcommand{\mK}{{\mathbb K}}
\newcommand{\VV}{{\mathbb V}}
\renewcommand{\le}{\leqslant}
\renewcommand{\ge}{\geqslant}
\font\Bbbfont=msbm10 scaled 1200%
\font\bbbfont=msbm10 scaled 1000%
\font\Bbbsmallfont=msbm8%
\font\euzw=eufm10 scaled 1200%
\font\euac=eufm7 scaled 1200%
\font\euacc=eufm7 scaled 1000%
\font\euszw=eusm10 scaled 1200%
\font\eusac=eusm7 scaled 1200%
\font\eusacc=eusm7 scaled 1000%
\def\varnothing{\hbox {\Bbbfont\char'077}}
\def\bbk{\hbox {\Bbbfont\char'174}}
\def\bkk{\hbox {\bbbfont\char'174}}
\font\euszw=eusm10 scaled 1200%
\font\eusac=eusm7 scaled 1200%
\font\eusacc=eusm7 scaled 1000%
\begin{document}
\hfill {\scriptsize November 24, 2008} 
\vskip1ex

\title[Surprising properties of centralisers]
{Surprising properties of centralisers in classical Lie algebras}
\author{O.\,Yakimova}
\address{Mathematisches Institut, 
Universit\"at Erlangen-N\"urnberg,
Bismarckstrasse 1\,1/2,
91054 Erlangen  Germany}
\email{yakimova@mpim-bonn.mpg.de}
\keywords{Nilpotent orbits, centralisers, symmetric invariants}
\subjclass[2000]{17B45}
\begin{abstract}
Let $\g$ be a classical Lie algebra, i.e., 
either $\gt{gl}_n$, $\gt{sp}_n$, or $\gt{so}_n$ and 
let $e$ be a nilpotent element of $\g$. 
We study various properties 
of centralisers $\g_e$.
The first four sections deal with rather elementary questions,
like  the centre of $\g_e$, commuting
varieties associated with $\g_e$, or 
centralisers of commuting pairs. The second half of the paper 
addresses problems related to different Poisson structures 
on $\g_e^*$ and symmetric invariants of $\g_e$. 
\vskip0.5ex

\noindent
{\sc R{\'e}sum{\'e}.} \ Soit $\g$ une alg{\`e}bre de Lie classique, i.e.,
$\gt{gl}_n$, $\gt{sp}_n$, ou $\gt{so}_n$, et
soit $e$ un {\'e}l{\'e}ment nilpotent de $\g$.
Nous {\'e}tudions dans cet article diverses propri{\'e}t{\'e}s 
du centralisateur $\g_e$ de $e$. 
Les quatre premi{\`e}res sections concernent
des probl{\`e}mes assez {\'e}l{\'e}mentaires portant sur le centre de $\g_e$,
la vari{\'e}t{\'e} commutante de $\g_e$,
ou encore des centralisateurs des paires commutantes.
La seconde partie aborde des questions li{\'e}es aux
diff{\'e}rentes structures de Poisson sur $\g_e^*$
et aux invariants sym{\'e}triques de $\g_e$. 
\end{abstract}
\maketitle
\maketitle

\tableofcontents

\section*{Introduction}

Suppose that $G$ is a connected  
reductive 
algebraic group defined over a
field $\ff$ and $\gt g=\Lie G$.
For 
$x\in\gt g$ let  
$\gt g_x$  denote the centraliser of $x$ in $\gt g$.
Due to the existence of the Jordan decomposition many questions about 
centralisers are readily reduced to nilpotent elements $e\in\gt g$.  
In this paper we restrict ourself to the case of classical $\gt g$
and study various  properties of centralisers.
The first four sections deal with rather elementary questions,
like {\it commuting varieties} associated with $\gt g_e$ or 
centralisers of commuting pairs. The second half of the paper 
addresses problems related to different Poisson structures 
on $\gt g_e^*$ and symmetric invariants of $\gt g_e$. 
It pursues  further an approach and some 
methods  of \cite{ppy}.

 In Section~1,
we introduce a basis of $\gt g_e$, which is used throughout 
the paper. 
Section~2 is devoted to the description of the 
centre of $\gt g_e$.
Let $\cN(\gt g)\subset\gt g$ be the nilpotent cone, i.e., the set 
of nilpotent elements. 
Let $\rk\gt g$ denote the rank of $\gt g$.
Answering a question of  Hotta and Kashiwara, Sekiguchi wrote 
a short note \cite{sek},
where he stated (without a proof)
that, for each  classical Lie algebra $\gt g$ and each 
$e\in\cN(\gt g)$, there exists 
$x\in\gt g_e$ such that 
the centraliser $\gt g_{(e,x)}=\gt g_e\cap\gt g_x$ 
is of dimension $\rk\gt g$. He addressed the same problem 
for the exceptional Lie algebras, but was not able to deal with the $E_8$ case
and overlooked one orbit in type $G_2$. 
Recently W. de Graaf \cite{graaf}  calculated (using computer)  
that in the exceptional Lie algebras there are 
only three nilpotent orbits $Ge$ such that 
$\dim\gt g_{(e,x)}>\rk\gt g$ for all $x\in\gt g_e$, one in $G_2$,
one in $F_4$, and one in $E_8$. 
In Section~3, we prove that,
for each $x$ in a classical Lie algebra 
$\gt g$, there is a {\it nilpotent} element $e\in\gt g_x$ such that 
$\dim\gt g_{(x,e)}=\rk\gt g$.


In Section~\ref{commv}, we study {\it mixed} commuting varieties, 
$\gt C^*(\gt g_e)=\{(x,\alpha)\in\gt g_e{\times}\gt g_e^* \mid \alpha([x,\gt g_e])=0\}$, 
associated with centralisers. 
In contrast with the reductive case, these varieties can be
reducible. The simplest examples are 
provided by 
a minimal nilpotent element in 
$\gt{sl}_4$ (defined by  partition $(2,1,1)$)) and a nilpotent element 
$e\in\gt{sp}_6$  with Jordan blocks $(4,2)$. On the other hand, we prove that if 
$e\in\cN(\gt{gl}_n)$ has at most two Jordan blocks, then 
$\gt C^*(\gt g_e)$ is irreducible. 

The last four sections are devoted to the coadjoint representation 
of $\gt g_e$. In those sections we assume that  
the ground field $\ff$ is algebraically closed and 
of characteristic zero. For a linear action of a Lie algebra 
$\gt q$ on a vector space $V$, let $\gt q_v$ denote the 
stabiliser of $v\in V$ in $\gt q$. 
Recall that 
$\ind\gt q=\,\min_{\gamma\in\gt q^*} \dim\gt q_\gamma$.  
Set
$$
\gt q^*_{\rm sing}:=\{\gamma\in\gt q^*\mid \dim\gt q_\gamma>\ind\gt q\}.
$$ 
For a reductive Lie algebra $\gt g$ we have $\codim\gt g^*_{\rm sing}\ge 3$. 
In Section 5, the same is shown to be true for the centralisers in 
type $A$. In type $C$ there are elements such that 
$\codim(\gt g_e^*)_{\rm sing}=2$. In all other simple 
Lie algebras $\gt g$ 
the codimension of $(\gt g_e^*)_{\rm sing}$ may be $1$, see \cite[Section 3.9]{ppy}.

The dual space $\gt q^*$ of a Lie algebra carries a 
Poisson structure induced by the Lie-Poisson bracket on $\gt q$.
Having inequalities like $\codim\gt q^*_{\rm sing}\ge 2,3$
one can construct interesting (maximal) Poisson-commutative 
subalgebras in ${\mathcal S}(\gt q)$, see \cite{codim3}. 

By the Jacobson-Morozov theorem, $e$ can be included into an 
$\gt{sl}_2$-triple $(e,h,f)$ in $\g$.  Let us identify $\gt g$ and 
$\gt g^*$ by means of the  Killing form on $\g$.
Then $\gt g_e^*$ is isomorphic to a so called 
{\it Slodowy slice} $\mathbb S_e=e+\gt g_f\subset\gt g^*$
at $e$ to the (co)adjoint orbit $Ge$. 
The Slodowy slice ${\mathbb S}_e$, and hence
$\gt g_e^*$, carries another polynomial  Poisson 
structure, obtained from $\gt g^*$ via Weinstein reduction, 
see e.g. \cite{cush-rob} or \cite{gg}.
This second Poisson bracket is not linear in general and its linear part coincides 
with the Lie-Poisson bracket on $\gt g_e^*$. On the quantum level,
one can express 
the fact by saying that a {\it finite $W$-algebra}\  $W(\gt g,e)$
is a deformation of
the universal enveloping algebra ${\bf U}(\gt g_e)$. 
The centre of $W(\gt g,e)$ is a polynomial algebra in $\rk\gt g$ variables 
for all $\gt g$ and $e$. 
(It can be deduced from the 
analogous statement on the Poisson level, which is proved  
e.g. in \cite[Remark 2.1]{ppy}.)
In \cite{ppy}, the same is shown to be true for
the centre of ${\bf U}(\gt g_e)$, which is isomorphic to 
${\mathcal S}(\gt g_e)^{\gt g_e}$, if $\gt g$ is of type $A$ or $C$. 
In type $A$ another proof is given by Brown and Brundan \cite{bb}.
In Section~6, we compare construction of \cite{ppy} and \cite{bb} and 
conclude that they produce the same set of generating 
symmetric invariants.    

In Section~7, we prove that, in types $A$ and $C$, a generic fibre of the 
quotient morphism $\gt  g_e^*\to\gt g_e^*/\!\!/G_e$ consists of a single 
(closed) $G_e$-orbit. The most interesting fibre of this quotient  morphism 
is the one containing zero, the so called  
{\it null-cone} $\cN(e)$.  In type $A$ it is equidimensional 
by \cite[Section 5]{ppy}.  
Contrary to the expectations, see \cite[Conjecture 5.1]{ppy}, 
the null-cone is not reduced (as a scheme). 
A counterexample is provided by 
$e\in\cN(\gt{gl}_6)$ with partition $(4,2)$. 
This implies that the {\it tangent cone} at $e$ to 
$\cN(\gt{gl}_6)$   is not reduced either. For 
this nilpotent element there is an 
irreducible component of 
$\cN(e)$, which contains infinitely many closed  
$G_e$-orbits  and no regular elements. 

If $e\in\gt{gl}_n$ is defined by a rectangular 
partition $d^k$, then $\gt g_e$ is  
a truncated current algebra 
$\gt{gl}_k\otimes\ff[t]/(t^d)$ and it is also a 
so called  {\it Takiff} Lie algebra.  
As was noticed by Eisenbud and Frenkel \cite[Appendix]{mus},
a deep result of Must{\u{a}}{\c{t}}a \cite{mus} implies that 
 $\cN(e)$ is irreducible.  
Apart from that little is known about the number of 
irreducible components  of $\cN(e)$. 
We compute that $\cN(e)$ has $m{+}1$ components for the 
hook partition 
$(n, 1^m)$ with $n>1$, $m>0$, and $min(n-m,m){+}1$ components for 
the partition $(n,m)$ with $n\ge m$. 

Suppose that either $\gt g$ is an orthogonal Lie algebra and $e\in\gt g$
has only Jordan blocks of odd size or $\gt g$ is symplectic and $e$ has only  
Jordan blocks of even size. Then, as shown in Section~8,  all 
irreducible components of $\cN(e)$ are of dimension $\dim\gt g_e-\rk\gt g$. 
In type $A$ the same result 
is proved in \cite[Section~5]{ppy} for all nilpotent elements. 
  
In Sections~1--4, the ground field is supposed to be 
infinite and 
whenever dealing with orthogonal or symmetric Lie algebras we 
assume that ${\rm char}\,\mathbb F\ne 2$. 

\vskip1ex

\noindent {\bf Acknowledgements.} 
Parts of this work were carried out 
during my stay at the
Universit\"at zu K\"oln and 
Max Planck Institut f{\"u}r Mathematik (Bonn).  
I would like to thank both
institutions for hospitality and support. 
My gratitude goes to the anonymous referee for the 
careful reading of the paper and pointing out some 
inaccuracies. Thanks are also due to 
Alexander Elashvili and Alexander Premet for 
many inspiring discussions on the subject of nilpotent orbits.

\section{Basis of a centraliser}

The main object of this section is to introduce our notation. 
We construct a certain basis in $\gt  g_e$, which is used throughout 
the paper.  
Let $\VV$ be an $n$-dimensional vector space over
$\ff$ and let $e$ be a nilpotent element in
$\g=\gt{gl}(\VV)$. Let $k$ be the number of Jordan blocks of $e$
and $W\subseteq \VV$ a ($k$-dimensional) complement of $\Im e$ in
$\VV$. Let $d_i+1$ denote the size of the $i$-th Jordan block of
$e$. We always assume that the Jordan blocks are ordered such that
$d_1\ge d_2\ge\ldots\ge d_k$ so 
that $e$ is represented by the partition $(d_1{+}1,\ldots,d_k{+}1)$
of $n=\dim\VV$. 
Choose a basis $w_1, w_2, \ldots, w_k$ 
in $W$ such that the vectors $e^{j}{\cdot}w_i$ with $1\le
i\le k$, $0\le j\le d_i$ form a basis for $\VV$, and put
$\VV[i]:=\textrm{span}\{e^j{\cdot}w_i\,|\,\, j\ge 0\}$. Note that
$e^{d_i+1}{\cdot}w_i=0$ for all $i\le k$. 

If $\xi\in\g_e$, then $\xi(e^j{\cdot}w_i)=e^j{\cdot} \xi(w_i)$, hence
$\xi$ is completely determined by its values on $W$. Each vector
$\xi(w_i)$ can be written as
\begin{equation}
\xi(w_i)\,=\,\sum_{j,s} c_i^{j,s}e^s{\cdot}w_j,\ \qquad
c_i^{j,s}\in\ff.
\end{equation}
Thus, $\xi$ is completely determined by the coefficients
$c_i^{j,s}=\,c_i^{j,s}(\xi)$. This shows that $\g_e$ has a basis
$\{\xi_i^{j,s}\}$ such that
$$
\left\{
\begin{array}{l}
\xi_i^{j,s}(w_i)=e^s{\cdot}w_j, \\
\xi_i^{j,s}(w_t)=0 \enskip \mbox{for } t\ne i, \\
\end{array}\right.
\quad 1\le i,j\le k, \ \mbox{ and }\ \max\{d_j-d_i, 0\} \le s\le d_j \ .
$$
Note that $\xi\in\g_e$ preserves each $\VV[i]$ \iff $c_i^{j,s}(\xi)=0$
for $i\ne j$.

An example of $\xi_i^{j,1}$ with $i>j$ and $d_j=d_i{+}1$ is shown on 
Picture~1. On Picture~2, we indicate elements $\xi_i^{j,s}$ using 
Arnold's description of $\gt g_e$ for $e$ with three Jordan blocks. 
In that interpretation $e$ is given in a standard Jordan form and 
each $\xi_i^{j,s}$ as a matrix with entries $1$ on one of the (above) 
diagonal lines in one of the nine rectangles.

\begin{figure}[htb]
\setlength{\unitlength}{0.023in}
\begin{center}
\begin{picture}(90,85)(-9,-5)

\put(-9,6){$e\!\!:$}
\put(-2.7,5){\vector(0,1){10}}

\put(10,0){\line(1,0){20}}
\put(10,0){\line(0,1){70}}
\put(10,70){\line(1,0){20}}
\put(30,0){\line(0,1){70}}
\put(10,30){\line(1,0){20}}
\put(10,20){\line(1,0){20}}
\put(10,10){\line(1,0){20}}
\put(10,60){\line(1,0){20}}
\put(70,0){\line(0,1){60}}
\put(70,0){\line(1,0){20}}
\put(70,60){\line(1,0){20}}
\put(90,0){\line(0,1){60}}
\put(70,10){\line(1,0){20}}
\put(70,20){\line(1,0){20}}
\put(70,50){\line(1,0){20}}

\qbezier[20](20,32),(20,45),(20,58)
\qbezier[16](80,22),(80,35),(80,48)
\qbezier[15](50,23),(50,40),(50,57)

\put(70,05){\vector(-4,1){40}}                        
\put(70,15){\vector(-4,1){40}}  
\put(70,55){\vector(-4,1){40}}  

 \put(48,65){$\xi_i^{j,1}$}

{\small                                        
\put(11,63){$e^{d_j}{\cdot}w_j$}
\put(12.1,23){$e^2{\cdot}w_j$}
\put(13,13){$e{\cdot}w_{j}$}
\put(15.5,3){$w_{j}$}
\put(17.3,-5.5){$j$} 

\put(72,53){$e^{d_i}{\cdot}w_i$}
\put(74,13){$e{\cdot}w_{i}$}
\put(76.5,3){$w_{i}$}
\put(79,-5.5){$i$}  }
               
\put(31.4,2.6){$\ldots\ldots\ldots\ldots$}

\end{picture}
\end{center}
\caption{}\label{pikcha_A}
\end{figure}

\begin{figure}[htb]
\setlength{\unitlength}{0.023in}
\begin{center}
\begin{picture}(93,90)(-3,0)

\qbezier(0,0),(-3,45),(0,90)
\qbezier(90,0),(93,45),(90,90)

\put(0,20){\line(1,0){90}}
\put(0,50){\line(1,0){90}}
\put(70,0){\line(0,1){90}}
\put(40,0){\line(0,1){90}}

\put(1,89){\line(1,-1){38}}
\put(5,87){$e$}
\put(8.4,86.6){\line(1,-1){11.5}}
\put(20,72){$e$}
\put(23.4,71.6){\line(1,-1){12.2}}
\put(36,56){$e$}
\put(11,90){\line(1,-1){27.5}}
\put(41,49){\line(1,-1){28}}
\put(71,19){\line(1,-1){18}}
\put(23,79){$\xi_1^{1,s}$}
\qbezier(15.5,90),(19.25,86.25),(23,82.5)   
\qbezier(29,76.5),(34,71.5),(39,66.5) 
\qbezier[10](20,88),(27.5,88),(35,88) 
\qbezier(35,90)(37,88)(39,86)

\put(45,46.5){$e$}
\put(48.4,46.6){\line(1,-1){18}}
\put(66,25.5){$e$}
\qbezier[4](51,45.5)(52.5,45.5)(54,45.5)
\put(55,43){$\xi_2^{2,s}$}
\qbezier(61.5,41.5)(64.5,38.5)(67.5,35.5)
\qbezier[2](65.5,45.5)(67,45.5)(68.5,45.5)

\put(75,16.5){$e$}
\put(78.4,16.6){\line(1,-1){11}}
\qbezier(79.5,19)(84.5,14)(89.5,9)
\qbezier[2](83.5,17)(84.5,17)(85.5,17)
\qbezier(85.5,19)(87.5,17)(89.5,15)

\put(41,89){\line(1,-1){28}}
\put(53,83.5){$\xi_2^{1,s}$}
\qbezier[4](45.5,86)(48.75,86)(52,86)
\qbezier[2](63.76,86)(66.03,86)(68.3,86)
\qbezier(60,82)(64.5,77.5)(69,73)

\put(71,89){\line(1,-1){19}}
\put(78,83.5){$\xi_3^{1,s}$}
\qbezier(84.5,81.5)(86.75,79.25)(89,77)

\put(71,49){\line(1,-1){19}}
\put(78,43){$\xi_3^{2,s}$}
\qbezier(84.5,41.5)(86.75,39.25)(89,37)

\put(13,43){$\xi_1^{2,0}$}
\put(39,21){\line(-1,1){20}}
\put(23,43){$\xi_1^{2,1}$}
\put(39,31){\line(-1,1){10}}
\qbezier[4](33,45)(36,45)(39,45)

\put(23,13){$\xi_1^{3,0}$}
\put(39,1){\line(-1,1){10}}
\qbezier[4](33,15)(36,15)(39,15)

\put(53,13){$\xi_2^{3,0}$}
\put(69,1){\line(-1,1){10}}
\qbezier[4](63,15)(66,15)(69,15)

\end{picture}
\end{center}
\caption{}\label{pikcha_Arnold}
\end{figure}

A direct computation shows that the following commutator relation
holds in $\g_e$:
\begin{eqnarray}\label{commutator}
[\xi_i^{j,s},\xi]=\sum_{t,\ell}c_t^{i,\ell}(\xi)\xi_t^{j,\ell+s}-
\sum_{t,\ell}c_j^{t,\ell}(\xi)\xi_i^{t,\ell+s}\qquad\
(\forall\,\xi\in\g_e);
\end{eqnarray}
see \cite{fan} for more detail. 

Let $(\xi_i^{j,s})^*$ be a linear function on $\gt{g}_e$ such that 
$(\xi_i^{j,s})^*(\xi)=c_i^{j,s}(\xi)$. Then 
$\left<(\xi_i^{j,s})^*\right>$ form a basis of $\gt{g}_e^*$ dual to the basis 
$\left< \xi_i^{j,s}\right>$ of $\gt{g}_e$.  

Let $a\colon\, {\ff}^{^\times}\rightarrow
\GL(\VV)_e$ be the cocharacter such that $a(t){\cdot}w_i=t^iw_i$ for all
$i\le k$ and $t\in{\ff}^{^\times}$, and define a rational
linear action $\rho\colon\,{\ff}^{^\times}\rightarrow\GL(\g_e^*)$ by the formula
\begin{equation}          \label{ro}
\rho(t)\gamma=\,t\Ad^*(a(t)^{-1})\gamma \qquad\ \ \ \ \ \ \ \,
\big(\forall\,\gamma\in\g_e^*, \ \, \forall\,t\in \ff^{^\times}\big).
\end{equation}
Then $\rho(t)(\xi_i^{j,s})^*=t^{i-j+1}(\xi_i^{j,s})^*$ 
and for the adjoint action, denoted by the same 
letter, we have $\rho(t)\xi_i^{j,s}=t^{j-i-1}\xi_i^{j,s}$.

\vskip1ex

Let $(\ \,,\ )$ be a nondegenerate symmetric or
skew-symmetric bilinear form on $\VV$, i.e., 
$(v,w)=\esi(w,v)$, where $v,w\in\VV$ and $\esi=+1$ or $-1$.
Let $J$ be the matrix of  $(\ \,,\ )$ with respect to a basis $B$ of $\VV$. 
Let $X$ denote the matrix
of $x\in\gt{gl}(\VV)$ relative to $B$. The linear mapping $x\mapsto
\sigma(x)$ sending each $x\in\gt{gl}(\VV)$ to the linear
transformation $\sigma(x)$ whose matrix relative to $B$ equals
$-JX^t J^{-1}$ is an involutory automorphism of $\gt{gl}(\VV)$
independent of the choice of $B$. The elements of $\gt {gl}(\VV)$
preserving $(\ \, ,\ )$ are exactly the fixed points of $\sigma$. We
now set $\widetilde{\g}:=\gt{gl}(\VV)$ and let
$\widetilde{\g}=\,\widetilde{\g}_0\oplus\widetilde{\g}_1$ be the
symmetric decomposition of $\widetilde{\g}$ 
corresponding to the $\sigma$-eigenvalues $1$ and $-1$. 
The elements $x\in\widetilde{\g}_1$ have the property
that $( x{\cdot}v,w) =(v, x{\cdot}w)$ for all $v,w\in \VV$.

Set $\g:=\widetilde{\g}_0$ and let $e$ be a nilpotent element of
$\g$. Since $\sigma(e)=e$, the centraliser $\widetilde{\g}_e$ of
$e$ in $\widetilde{\g}$ is $\sigma$-stable and
$(\widetilde{\g}_e)_0=\,\widetilde{\g}_e^{\sigma}=\,\g_e$. This
yields the $\gt g_e$-invariant symmetric decomposition
$\widetilde{\gt g}_e=\gt g_e\oplus(\widetilde{\gt g}_e)_1$.

\begin{lm}   \label{restr}
In the above setting, suppose that $e\in\widetilde{\gt g}_0$ is a nilpotent element.
Then the cyclic vectors $\{w_i\}$ and thereby the spaces $\{\VV[i]\}$ can be chosen
such that there is an involution $i\mapsto i'$ on the set
$\{1,\dots, k\}$ satisfying the following conditions:  
\begin{itemize}
\item $d_i=d_{i'}$; 
\item $(\VV[i], \VV[j])=0$ if $i\ne j'$;
\item $i=i'$ if and only if $(-1)^{d_i}\esi=1$.
\end{itemize}
\end{lm}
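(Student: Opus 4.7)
I would argue by induction on $\dim\VV$, at each stage splitting off an orthogonal $e$-stable summand consisting of either one cyclic subspace $\VV[1]$ of maximal dimension $d_1{+}1$ (declared $1=1'$) or two such cyclic subspaces $\VV[1],\VV[2]$ hyperbolically paired (declared $1'=2$, $2'=1$). The essential preliminary remark is that $\sigma(e)=e$ forces $e$ to be skew with respect to the form, $(ev,w)=-(v,ew)$, hence $(e^a v,w)=(-1)^a(v,e^a w)$ for every $a\ge 0$.

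Set $d:=d_1$, so that $e^{d+1}=0$ on $\VV$. Consider the bilinear form $B(v,u):=(v,e^d u)$ on $\VV$. It is non-zero, for otherwise $\Im e^d$ would lie in the radical of $(\,\cdot\,,\,\cdot\,)$; and
\[
B(u,v)=(-1)^d(e^d u,v)=(-1)^d\esi\,(v,e^d u)=(-1)^d\esi\,B(v,u),
\]
so $B$ is symmetric when $(-1)^d\esi=1$ and alternating when $(-1)^d\esi=-1$.

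\emph{Case $(-1)^d\esi=1$.} The symmetric bilinear form $B$ is non-zero, so (using $\mathrm{char}\,\ff\ne 2$) its associated quadratic form $Q(v)=(v,e^d v)$ is not identically zero. Choosing $w_1$ with $Q(w_1)\ne 0$ gives $e^d w_1\ne 0$ and $w_1\notin\Im e$; the Gram matrix of $\VV[1]:=\textrm{span}\{e^j w_1\mid 0\le j\le d\}$ has entries $(-1)^a(w_1,e^{a+b}w_1)$, which vanish when $a+b>d$ and equal $\pm(w_1,e^d w_1)\ne 0$ on the anti-diagonal $a+b=d$, hence are non-singular. Thus $\VV[1]$ is non-degenerate, $\VV=\VV[1]\oplus\VV[1]^\perp$ is an $e$-stable orthogonal decomposition (since $e$ is skew, $\VV[1]^\perp$ is $e$-invariant), the restricted form on $\VV[1]^\perp$ is non-degenerate of the same symmetry $\esi$, and induction applies.

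\emph{Case $(-1)^d\esi=-1$.} Here $B$ is alternating, so $(w,e^d w)=0$ for every cyclic generator $w$ of a longest block. Pick such a $w_1$; by non-degeneracy of $(\,\cdot\,,\,\cdot\,)$ there is $w_2\in\VV$ with $(w_1,e^d w_2)\ne 0$, which forces $e^d w_2\ne 0$, $w_2\notin\Im e$, and $\bar w_1,\bar w_2$ linearly independent in $\VV/\Im e$ (otherwise $(w_1,e^d w_2)$ would be a scalar multiple of $(w_1,e^d w_1)=0$). It follows that $e^d w_1$ and $e^d w_2$ are linearly independent in $\Im e^d$, and $\VV[1]\oplus\VV[2]$ has dimension $2(d{+}1)$. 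A successive-elimination argument then shows $\VV[1]\oplus\VV[2]$ is non-degenerate: if $v=\sum_a x_a e^a w_1+\sum_a y_a e^a w_2$ lies in the radical of the restricted form, pairing $v$ with $e^{d-k}w_2$ and $e^{d-k}w_1$ for $k=0,1,\ldots,d$ successively forces $x_k=y_k=0$, since the only non-vanishing ``top'' pairings $(e^a w_i,e^{d-a}w_j)$ are the cross ones $\pm(w_1,e^d w_2)$ while the self ones $(w_i,e^d w_i)$ all vanish. Hence $\VV=(\VV[1]\oplus\VV[2])\oplus(\VV[1]\oplus\VV[2])^\perp$ is $e$-stable and orthogonal, and induction applies to the complement.

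\textbf{Main obstacle.} The substantive ingredients are the symmetry dichotomy for $B$ and the non-vanishing of the quadratic form $Q$ in Case~1; both follow from the short calculation above, and the latter is where $\mathrm{char}\,\ff\ne 2$ enters. The non-singularity of the Gram matrix, the $e$-stability of the orthogonal complement, and the successive elimination in Case~2 are routine linear-algebra verifications valid over an arbitrary infinite field of characteristic $\ne 2$, matching the blanket assumption of Sections~1--4. I do not anticipate any serious obstacle beyond careful sign bookkeeping.
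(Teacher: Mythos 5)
Your induction scheme (split off one self-paired maximal block when $(-1)^{d_1}\esi=1$, or a pair of maximal blocks when $(-1)^{d_1}\esi=-1$, then pass to the orthogonal complement) is the standard route; the paper itself only cites \cite[Sect.~5.1]{cm} and \cite[Sect.~1]{ja} here, so your write-up is an honest attempt at the argument those references contain. Case~1 and the general reductions (skewness of $e$, the dichotomy for $B$, non-vanishing of $Q$ in characteristic $\ne 2$, $e$-stability of $\VV[1]^\perp$, the anti-triangular Gram matrix) are fine.

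There is, however, a genuine gap in Case~2. The lemma's second condition with $i=j$ and $i\ne i'$ demands that each member of a hyperbolic pair be \emph{totally isotropic}: $(\VV[1],\VV[1])=0=(\VV[2],\VV[2])$ when $1'=2$. Your construction only uses $(w_1,e^{d}w_1)=0$ (which the alternating form $B$ gives for the single degree $s=d$) and non-degeneracy of $\VV[1]\oplus\VV[2]$; it does not control $(w_1,e^{s}w_1)$ for $s<d$ of parity opposite to $d$, and these need not vanish for an arbitrary admissible choice of $w_1$. Concretely, take $\esi=1$, $d=1$, $\VV$ of dimension $4$ with a basis $a,ea,b,eb$ and symmetric form determined by $(a,eb)=(a,b)=1$, $(ea,b)=-1$, all other pairings zero; then $e$ is skew, has partition $(2,2)$, and $w_1=a+eb$, $w_2=b$ satisfy all the requirements you impose ($e w_1\ne 0$, $(w_1,ew_2)=1\ne 0$), yet $(w_1,w_1)=2\ne 0$, so $\VV[1]$ is not isotropic and the conclusion of the lemma fails for this choice. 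The missing step is the standard recursive correction of the generators: after normalising $(w_1,e^{d}w_2)=1$, replace $w_1$ by $w_1+\sum_{s\ge 1}c_s e^{s}w_2$ (and then adjust $w_2$ similarly by elements of $\VV[1]\oplus e\VV[2]$), choosing the coefficients degree by degree so as to kill all self-pairings $(w_1,e^{s}w_1)$ and $(w_2,e^{s}w_2)$ while preserving the cross-pairing; only after this normalisation do the two cyclic subspaces satisfy $(\VV[i],\VV[j])=0$ for $i\ne j'$, which is exactly what the paper needs later when it further normalises $(w_i,e^{s}{\cdot}w_{i'})$.
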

\begin{proof} 
This is  a standard property of the nilpotent orbits in $\gt{sp}(\VV)$ and
$\gt{so}(\VV)$, see, for example, \cite[Sect.~5.1]{cm} or \cite[Sect.~1]{ja}.
\end{proof}

Let $\{w_i\}$ be a set of cyclic vectors 
chosen according to Lemma~\ref{restr}. Consider the restriction of 
the $\gt g$-invariant form $(\ \,,\ )$  to $\VV[i]+\VV[i']$. 
Since $(w,e^s{\cdot}v)=(-1)^s(e^s{\cdot}w,v)$, 
a vector $e^{d_i}{\cdot}w_i$ is orthogonal to all 
vectors $e^s{\cdot}w_{i'}$ with $s>0$. Therefore 
$(w_{i'},e^{d_i}{\cdot}w_i)=(-1)^{d_i}(e^{d_i}{\cdot}w_{i'},w_i)\ne 0$.
There is a (unique up to a scalar)  vector $v\in\VV[i]$ such that 
$(v,e^s{\cdot}w_{i'})=0$ for all $s<d_i$. It is not contained  in 
$\Im e$, otherwise it would be orthogonal to $e^{d_i}{\cdot}w_{i'}$ too and hence 
to $\VV[i']$. Therefore there is no harm in replacing $w_i$ by $v$. 
Let us always choose the cyclic vectors $w_i$ in such a way 
that $(w_i,e^s{\cdot}w_{i'})=0$ for $s<d_i$ and 
normalise  them according to: 
\begin{equation}\label{normalise}
(w_i,e^{d_i}{\cdot}w_{i'})=\pm 1 \ \text{ and } \
(w_i,e^{d_i}{\cdot}w_{i'})>0 \ \text{ if } \ i\le i'. 
\end{equation}
Then $\gt g_e$ is generated (as a vector
space) by the vectors
$\xi_i^{j,d_j-s}+\varepsilon(i,j,s)\xi_{j'}^{i',d_i-s}$,
where $\varepsilon(i,j,s)=\pm 1$ depending on $i,j$ and $s$ in 
the following way
$$
(e^{d_j-s}{\cdot}w_j,e^{s}{\cdot}w_{j'})=-\esi(i,j,s)(w_i,e^{d_i}{\cdot}w_{i'}).
$$
Elements $\xi_i^{j,d_j-s}-\varepsilon(i,j,s)\xi_{j'}^{i',d_i-s}$ form a basis 
of $(\widetilde{\gt g}_e)_1$. In the following we always 
normalise $w_i$ as above and 
enumerate the Jordan blocks such that 
$i'\in\{i,i+1,i-1\}$ keeping inequalities $d_i\ge d_j$ for $i<j$.
The matrix of the bilinear form $(\ \,,\ )$ in this basis 
$\{e^s{\cdot}w_i\}$ is anti-diagonal with entries $\pm 1$. 

\section{The centre of a centraliser}

Let $\gt z$ be the centre of  $\gt g_e$. 
The powers of $e$ (as a matrix) are also elements of $\gt{gl}(V)$.
Set $\gt E:=\gt g\cap\langle e^0,e,e^2,\ldots,e^{d_1}\rangle_{\ff}$.
All higher powers of $e$ are zeros; the first element,  
$e^0$, is the identity matrix.  
Clearly, $\gt E\subset\gt z$. 
If $\gt g$ is either $\gt{sl}(\VV)$ or 
$\gt{sp}(\VV)$, then  
this inclusion is in fact the equality and 
in  orthogonal Lie algebras $\gt z$ can be larger.
For $\gt g$ classical, the centre of $\gt g_e$ was  
described  by Kurtzke \cite{kur} and that 
description is not quite correct. 

The following result is well-known. 
The proof is easy and illustrates the general scheme of argument
very well.

\begin{thm} If $\gt g=\gt{gl}(\VV)$ , then $\gt z=\gt E$.
\end{thm}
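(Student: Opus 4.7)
The inclusion $\gt E\subseteq\gt z$ is immediate, since distinct powers of $e$ commute with one another and with every element of $\gt g_e$ by definition. So the task is to show the reverse: every central element is a polynomial in $e$. The plan is to take $\xi\in\gt z$, expand $\xi=\sum c_i^{j,s}(\xi)\xi_i^{j,s}$ in the basis of Section~1, and apply the commutator formula (1.2) to specific brackets $[\xi_i^{j,s},\xi]=0$ to squeeze the coefficients $c_i^{j,s}(\xi)$ into the required shape.

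The first step is to kill the ``off-diagonal'' part by testing against $\xi_i^{i,0}\in\gt g_e$. Formula (1.2) reads
$$
0=[\xi_i^{i,0},\xi]=\sum_{t,\ell}c_t^{i,\ell}(\xi)\xi_t^{i,\ell}-\sum_{t,\ell}c_i^{t,\ell}(\xi)\xi_i^{t,\ell}.
$$
For $t\ne i$ the basis vector $\xi_t^{i,\ell}$ only appears in the first sum and $\xi_i^{t,\ell}$ only in the second; equating coefficients against the basis yields $c_i^{j,s}(\xi)=0$ whenever $i\ne j$. Thus $\xi=\sum_{i,s}c_i^{i,s}(\xi)\xi_i^{i,s}$ is block-diagonal with respect to the $\VV[i]$ decomposition.

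For the second step, pick any pair $i\ne j$ with $d_i\ge d_j$, so that $\xi_i^{j,0}\in\gt g_e$. Using the vanishing from Step~1, formula (1.2) collapses to
$$
0=[\xi_i^{j,0},\xi]=\sum_{\ell=0}^{d_j}\bigl(c_i^{i,\ell}(\xi)-c_j^{j,\ell}(\xi)\bigr)\xi_i^{j,\ell},
$$
forcing $c_i^{i,\ell}(\xi)=c_j^{j,\ell}(\xi)$ for all $0\le\ell\le d_j$. Letting $i$ vary over all blocks with $d_i\ge\ell$, the common value $a_\ell:=c_i^{i,\ell}(\xi)$ depends only on $\ell$. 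Finally, one checks directly that $e^\ell=\sum_{i\colon d_i\ge\ell}\xi_i^{i,\ell}$ as endomorphisms of $\VV$, since both sides send $e^r\cdot w_j$ to $e^{r+\ell}\cdot w_j$ when $r+\ell\le d_j$ and to zero otherwise. Therefore $\xi=\sum_{\ell=0}^{d_1}a_\ell e^\ell\in\gt E$. The only real obstacle is careful bookkeeping with the admissible index ranges in (1.2); once that is sorted out, each step is a one-line identification of coefficients of linearly independent basis vectors.
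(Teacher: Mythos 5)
Your proposal is correct and follows essentially the same route as the paper: bracketing with the torus elements $\xi_i^{i,0}$ to force $\xi$ to be diagonal, then bracketing with an off-diagonal element $\xi_i^{j,0}$ to identify the diagonal coefficients across blocks. The paper merely phrases the second step as a contradiction after normalising $c_1^{1,s}(\eta)=0$ by subtracting an element of $\gt E$, whereas you equate the coefficients directly; the underlying computation is identical.
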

\begin{proof}
We have $e^s=\sum\limits_{i=1}^{k}\xi_i^{i,s}$ and $e^s\in\gt g$ 
for all $0\le s\le d_1$.
Suppose $\eta\in\gt z$.
Then $\eta$  commutes with 
the maximal torus $\gt t:=\langle\xi_i^{i,0}\rangle_{\ff}\subset\gt{gl}(\VV)_e$. 
We have 
$$
[\xi_i^{i,0},\xi_j^{t,s}]=\left\{
\begin{array}{rl}
-\xi_i^{t,s} &\mbox{ if } i=j, i\ne t; \\
\xi_j^{i,s} &\mbox{ if } i=t, i\ne j;\\
0 &\mbox{ otherwise.} \\
\end{array}\right.,
$$
Therefore $\eta\in\langle\xi_i^{i,s}\rangle_{\ff}$. 
Adding an element of 
$\gt E$ we may assume that $c_1^{1,s}(\eta)=0$ for all $s$.
If $\eta\not\in\gt E$, then there is some $c_i^{i,s}(\eta)$, which is not 
zero. Now take $\xi_1^{i,0}\in\gt g_e$ and compute that 
$$
[\eta,\xi_1^{i,0}]=c_i^{i,0}(\eta)\xi_1^{i,0}+c_i^{i,1}(\eta)\xi_1^{i,1}+\ldots+
c_i^{i,d_i}(\eta)\xi_1^{i,d_i}\ne 0.
$$
A contradiction! Thus $\gt z=\gt E$.
\end{proof}

\begin{cl} Suppose that $\gt g=\gt{sl}(\VV)$, then 
also  $\gt z=\gt E$. 
\end{cl}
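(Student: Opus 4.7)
The plan is to deduce the $\gt{sl}$-case from the preceding theorem for $\gt{gl}$ via the direct sum decomposition of the centraliser. The key observation is that $\gt{gl}(\VV)_e = \gt{sl}(\VV)_e \oplus \ff\cdot e^0$, because the identity matrix $e^0$ lies in $\gt{gl}(\VV)_e$ and (unless the characteristic divides $n$) is not contained in $\gt{sl}$; in any case the sum is direct modulo trivial adjustments, since $e^0$ is central in $\gt{gl}(\VV)$.

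First I would check the easy inclusion $\gt E \subset \gt z$. By definition every element of $\gt E$ is a polynomial in $e$ with scalar coefficients, hence lies in $\gt{gl}(\VV)_e$; the intersection with $\gt g = \gt{sl}(\VV)$ ensures that such an element in fact belongs to $\gt{sl}(\VV)_e$. Powers of $e$ commute with every element of $\gt{gl}(\VV)_e$ (and in particular with all of $\gt{sl}(\VV)_e$), so $\gt E \subset \gt z$.

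For the reverse inclusion, take $\eta \in \gt z \subset \gt{sl}(\VV)_e$. Since $e^0$ is central in $\gt{gl}(\VV)$, it certainly commutes with $\eta$. Combined with the assumption that $\eta$ commutes with $\gt{sl}(\VV)_e$, this shows that $\eta$ commutes with the whole of $\gt{gl}(\VV)_e = \gt{sl}(\VV)_e + \ff\cdot e^0$. By the theorem just proved, $\eta$ belongs to $\langle e^0, e, \ldots, e^{d_1}\rangle_\ff$. Intersecting with $\gt{sl}(\VV)$ yields $\eta \in \gt E$, completing the proof.

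There is essentially no obstacle here; the only thing worth being explicit about is the trivial fact that $\eta$ automatically commutes with $e^0$, which is what promotes a central element of $\gt{sl}(\VV)_e$ to a central element of $\gt{gl}(\VV)_e$ and thereby lets us invoke the theorem.
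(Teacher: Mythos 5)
Your reduction to the $\gt{gl}(\VV)$-theorem is exactly the natural (and surely the intended) argument, and it is complete whenever $n$ is invertible in $\ff$: then $\mathrm{tr}(e^0)=n\ne 0$, so $\gt{gl}(\VV)_e=\gt{sl}(\VV)_e\oplus\ff e^0$, a central element of $\gt{sl}(\VV)_e$ commutes with $e^0$ and hence with all of $\gt{gl}(\VV)_e$, and intersecting $\langle e^0,e,\ldots,e^{d_1}\rangle_\ff$ with $\gt{sl}(\VV)$ gives $\gt E$. In characteristic zero there is nothing more to say.

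The one genuine gap is precisely the clause you wave away with ``in any case the sum is direct modulo trivial adjustments''. The paper's conventions for Sections 1--4 allow an arbitrary infinite field (char $\ne 2$ is imposed only for the orthogonal and symplectic cases), so $\mathrm{char}\,\ff=p$ with $p\mid n$ is permitted. In that case $e^0$ has trace $n=0$, hence $e^0\in\gt{sl}(\VV)_e$, while the trace form need not vanish identically on $\gt{gl}(\VV)_e$; for instance for $p=2$ and the partition $(3,1)$ one has $\mathrm{tr}\,\xi_1^{1,0}=3\ne 0$, so $\gt{sl}(\VV)_e$ has codimension $1$ in $\gt{gl}(\VV)_e$ and $\gt{sl}(\VV)_e+\ff e^0=\gt{sl}(\VV)_e$ is a proper subspace. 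Then ``$\eta$ commutes with $\gt{sl}(\VV)_e$ and with $e^0$'' no longer implies that $\eta$ centralises $\gt{gl}(\VV)_e$, and the theorem cannot be invoked directly. The statement itself survives, but one must argue inside $\gt{sl}(\VV)_e$: the intersection of the torus $\langle\xi_i^{i,0}\rangle_\ff$ with $\gt{sl}(\VV)_e$ still forces a central $\eta$ to be ``block-diagonal'' except in the single degenerate situation $k=2$ with $p\mid n$, and that case is settled by bracketing with the elements $\xi_1^{1,1}$, $\xi_1^{2,0}$ and $\xi_2^{1,d_1-d_2}$, which kills the off-diagonal coefficients; after that the argument of the $\gt{gl}$-theorem applies verbatim. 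So either add the hypothesis that $n$ is invertible in $\ff$, or supply this supplementary computation for $p\mid n$.
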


\begin{thm}        \label{1}
If $\gt g=\gt{so}(\VV)$ and $e$ is given by a partition 
$(d_1+1,d_2+1,d_3+1,\ldots,d_k+1)$ with $k\ge 2$, where 
$d_2>d_3$ and both $d_1$ and $d_2$ 
are even, then $\gt z=\gt E\oplus\ff(\xi_1^{2,d_2}-\xi_2^{1,d_1})$.
For all other nilpotent elements of classical simple Lie algebras, 
we have $\gt z=\gt E$.
\end{thm}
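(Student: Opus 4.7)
The plan is to prove the two inclusions of the equality separately. The easy direction $\gt E\subseteq\gt z$ is immediate: any polynomial in $e$ which happens to lie in $\gt g$ commutes with the whole of $\gt g_e$. In the exceptional orthogonal case I verify directly that $\eta:=\xi_1^{2,d_2}-\xi_2^{1,d_1}$ belongs to $\gt g_e\setminus\gt E$ and is central. Membership in $\gt g_e$ follows from Lemma~\ref{restr} combined with the sign computation $\varepsilon(1,2,0)=-1$, which is forced by $d_1,d_2$ both even (so $1=1'$ and $2=2'$) and by the normalisation~\eqref{normalise}. That $\eta\notin\gt E$ is clear, since every element of $\gt E$ is a polynomial in $e$ and hence preserves each $\VV[i]$, while $\eta$ does not. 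Centrality is verified by applying $[\eta,\xi]$ to the cyclic vectors $w_t$. The decisive observation is
$$
\eta(w_1)=e^{d_2}w_2,\qquad\eta(w_2)=-e^{d_1}w_1,\qquad \eta(w_t)=0\quad(t\ge 3),
$$
so that each $\eta(w_t)$ lies at the top of a Jordan chain and is killed by every subsequent application of $e$. Consequently $[\eta,\xi](w_t)=\eta\xi(w_t)-\xi\eta(w_t)$ can be nonzero only if $\xi$ contributes a very low-degree component in $\VV[1]$ or $\VV[2]$; the hypothesis $d_2>d_3$ guarantees that basis elements of $\gt g_e$ mixing blocks $\{1,2\}$ with a block of index $\ge 3$ land too deep in $\VV[1]\oplus\VV[2]$ to survive a subsequent multiplication by $e^{d_1}$ or $e^{d_2}$, so they contribute zero; the remaining intrablock basis elements pair up and cancel through the $\varepsilon$-signs.

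For the reverse inclusion, following the pattern of Theorem~2.1, let $\eta\in\gt z$. Since the semisimple element $h$ of a Jacobson--Morozov triple $(e,h,f)\subset\gt g$ normalises $\gt g_e$ and acts on it semisimply, $\gt z$ is $h$-stable and I may assume $\eta$ is $h$-homogeneous. I then exploit three families of test commutators against the basis of $\gt g_e$: (i) the toral elements $\xi_i^{i,0}-\xi_{i'}^{i',0}$ (for $i\ne i'$), which cut down the possible summands of $\eta$ to a much shorter list; (ii) the ``$e$-block shifts'' $\xi_i^{i,1}$ (when $i=i'$) or $\xi_i^{i,1}+\xi_{i'}^{i',1}$ (when $i\ne i'$), which force the block-diagonal part of $\eta$ to be a uniform linear combination of the $\xi_i^{i,s}$'s, hence the restriction to $\gt g$ of a polynomial in $e$, hence an element of $\gt E$; and (iii) the ``long arrows'' $\xi_1^{j,0}+\varepsilon(1,j,d_j)\xi_{j'}^{1,d_1-d_j}$ for $j\ge 2$, which by the commutator formula~\eqref{commutator} kill all cross-block summands of $\eta$ except possibly one connecting the topmost two blocks at maximum height, namely a scalar multiple of $\xi_1^{2,d_2}-\xi_2^{1,d_1}$.

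The main obstacle is the meticulous sign bookkeeping through~\eqref{commutator}, which alone determines whether a surviving cross-block candidate actually lies in $\gt g_e$ or in the complementary space $(\widetilde{\gt g}_e)_1$. The exception of the theorem arises precisely at the intersection of three conditions: (a) $d_1,d_2$ are both even, so that $1=1'$, $2=2'$ and the candidate $\xi_1^{2,d_2}\pm\xi_2^{1,d_1}$ stays within the two self-paired blocks; (b) $d_2>d_3$, which prevents a commutator involving a third block from spoiling centrality; and (c) $\gt g$ is orthogonal ($\esi=+1$), yielding $\varepsilon(1,2,0)=-1$, which places $\xi_1^{2,d_2}-\xi_2^{1,d_1}$ (and not $\xi_1^{2,d_2}+\xi_2^{1,d_1}$) in $\gt g_e$. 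In every other configuration---symplectic $\gt g$, orthogonal with $d_1$ or $d_2$ odd, or $d_2=d_3$---the analogous candidate either falls into $(\widetilde{\gt g}_e)_1$, or is annihilated by an additional commutator involving a third block, and one recovers $\gt z=\gt E$.
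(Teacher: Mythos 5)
Step (ii) of your plan does not do what you claim. The elements $\xi_i^{i,1}$ (for $i=i'$) and $\xi_i^{i,1}+\xi_{i'}^{i',1}$ (for $i\ne i'$; incidentally the sign here should be $-$ in the symplectic case, where $i\ne i'$ forces $d_i$ even and $\esi(i,i,d_i-1)=-1$) lie in the span of the block-diagonal basis vectors $\xi_t^{t,\ell}$, and all such vectors commute with one another by the commutator formula~\eqref{commutator}: if $\eta=\sum_{t,\ell}c_t^{t,\ell}\xi_t^{t,\ell}$ then $[\xi_i^{i,1},\eta]=\sum_\ell c_i^{i,\ell}\xi_i^{i,\ell+1}-\sum_\ell c_i^{i,\ell}\xi_i^{i,\ell+1}=0$ identically. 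So these ``block shifts'' impose no constraint whatsoever on the block-diagonal part of $\eta$ and cannot force the uniformity $c_1^{1,\ell}=c_j^{j,\ell}$ that you need to conclude $\eta\in\gt E$. The constraint you want actually falls out of your step (iii): in $[\xi_1^{j,0}+\esi(1,j,d_j)\xi_{j'}^{1',d_1-d_j},\,\eta]$ the coefficient of $\xi_1^{j,\ell}$ is exactly $c_1^{1,\ell}-c_j^{j,\ell}$, and since $\{\xi_a^{b,s}\}$ is a basis of $\widetilde{\gt g}_e$ its vanishing forces the uniformity. So the plan is repairable, but (ii) as written is a dead step, and the work it was supposed to do must be explicitly transferred to (iii).

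With that repair the outline is in the same spirit as the paper's proof (test commutators read off through~\eqref{commutator}, with the paired structure of $\gt g_e$ driving the signs), though the paper organises the reduction differently: instead of your $h$-homogeneity reduction it uses a ``minimal $(i,j,s)$'' argument, it first applies the $\gt{sl}_2$-triples $\gt q_i$ for $i\ne i'$ (which is stronger than your torus-only step (i) and also gives $c_i^{i,s}=c_{i'}^{i',s}$ at once), and in the cross-block step it uses $\xi_j^{1,d_1-s}+\esi(j,1,s)\xi_{1'}^{j,d_j-s}$ with $s$ chosen adaptively to isolate a single offending coefficient, rather than your fixed long arrows $\xi_1^{j,0}+\dots$. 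Your verification that $\xi_1^{2,d_2}-\xi_2^{1,d_1}$ is central in the exceptional case is sound and a bit more transparent than the paper's: applying $[\eta,\xi]$ to the cyclic vectors and observing that $\eta(w_1)=e^{d_2}w_2$, $\eta(w_2)=-e^{d_1}w_1$ sit at the top of Jordan chains reduces everything to the single relation $c_1^{2,0}+c_2^{1,d_1-d_2}=0$ on $\gt g_e$, which is exactly $\esi(1,2,d_2)=-1$. Finally, note that when $1'\ne 1$ (i.e.\ $d_1$ odd) the long arrow in (iii) must read $\xi_{j'}^{1'\!,\,d_1-d_j}$ rather than $\xi_{j'}^{1,\,d_1-d_j}$; in your exceptional case $1'=1$ so this slip is harmless there, but you do need the general form to finish off the non-exceptional configurations.
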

\begin{proof} 
First we show that indeed in the special case indicated in the theorem we have 
an additional central element $x:=\xi_1^{2,d_2}-\xi_2^{1,d_1}$.
Note that $\xi_1^{2,d_2},\xi_2^{1,d_1}$ do not commute only with the elements 
$\xi_1^{1,0}$, $\xi_2^{2,0}$, $\xi_2^{1,d_1-d_2}$, and $\xi_1^{2,0}$.
Since $1'=1,2'=2$, the centraliser  
$\gt g_e$ contains no elements of the form $a\xi_1^{1,0}+b\xi_2^{2,0}$ and
we  have to check only that 
$[x,\xi_1^{2,0}+\esi(1,2,d_2)\xi_2^{1,d_1-d_2}]=0$.
Here $d_1$ and $d_2$ are even, therefore
$\esi(1,2,d_2)=-1$. We get 
$$
[x,\xi_1^{2,0}-\xi_2^{1,d_1-d_2}]=
   -\xi_1^{1,d_1}-\xi_2^{2,d_1}+\xi_1^{1,d_1}+\xi_2^{2,d_1}=0.
$$

Let us prove that $\gt z$ 
is not larger than stated in the theorem. 
The case $\gt g=\gt{gl}(\VV)$ (or $\gt{sl}(\VV)$) 
was treated above. Thus assume that $\gt g$ is either 
$\gt{sp}(\VV)$ or $\gt{so}(\VV)$. 
Then $\gt E$ is a vector space generated by all odd 
powers of $e$. 

Suppose that $\eta\in\gt z$. 
If $\eta$ preserve the cyclic spaces $\VV[i]$, then 
$\eta\in\gt E$. It can be shown exactly in the same way as 
in the $\gt{gl}(V)$ case.  
Note that whenever $i\ne i'$ 
there is an $\gt{sl}_2$-triple (subalgebra)
$\gt q_i=\langle\xi_i^{i,0}-\xi_{i'}^{i',0},
   \xi_i^{i',0},\xi_{i'}^{i,1}\rangle_{\ff}\subset \gt g_e$.
Equality $[\eta,\gt q_i]=0$ forces $c_i^{j,s}(\eta)=0$
whenever $i\ne i'$ (or $j\ne j'$) and $i\ne j$,  also $c_i^{i,s}(\eta)=c_{i'}^{i',s}(\eta)$
for $i\ne i'$. 

Assume that $\eta\not\in\gt E$.
Take the minimal $i$ such that there is a non-zero $c_i^{j,s}(\eta)$ with $j\ne i$.
(Necessary $i'=i$ and $j'=j$.)
Fix this  $i$ and take the minimal $j$, and then the minimal $s$, with this property.
Since $c_j^{i,d_j-d_i+s}(\eta)\ne 0$, we have also $j>i$ and therefore $j>1,1'$.
There is an element $\xi:=\xi_j^{1,d_1-s}+\esi(j,1,s)\xi_{1'}^{j,d_j-s}\in\gt g_e$.
Consider the commutator $[\xi,\eta]=\xi\eta-\eta\xi$. We are interested 
in the coefficient $a_i:=c_i^{1,d_1}([\xi,\eta])$. 
Since all coefficients $c_i^{1',r}(\eta)$ are zeros and $j\ne i$, we get
$$
a_i=c_i^{j,s}(\eta)-\delta_{i,1}\esi(j,1,s)c_j^{1,d_1-d_j+s}(\eta).
$$
In particular, if $i\ne 1$, then $\eta$ is not a central element. 
Therefore $i=1$.

In the symplectic case $d_1$ and $d_j$ are odd, hence
$d_j-s$ and $s$ have different parity and 
$\esi(j,1,s)\esi(1,j,d_j-s)=-1$. Thus $a_i=2c_i^{j,s}\ne 0$. 
We get a contradiction. 

The orthogonal case is more complicated. 
If $j>2$, then also $j>2'$ and 
$$
c_1^{2,d_2}([\xi_j^{2,d_2-s}+\esi(j,2,s)\xi_{2'}^{j,d_j-s},\eta])=c_1^{j,s}(\eta)\ne 0.
$$
Since $\eta\in\gt z$, we get $j=2$. If $d_3=d_2$, then $3'=3$ and there is 
a semisimple element $\xi_2^{3,0}-\xi_3^{2,0}\in\gt g_e$, 
which does not commute with $\eta$. 

It remains to consider only the special case $d_2>d_3$. 
There is no harm in replacing $\eta$ by 
$\eta-c_1^{2,d_2}(\eta)(\xi_1^{2,d_2}-\xi_2^{1,d_1})$. In other words, we may assume that 
$c_1^{2,d_2}(\eta)=0$ and thereby $s<d_2$. It is not difficult to see that 
$\eta$ does not commute either with 
$\xi_1^{2,1}+\xi_2^{1,d_1-d_2+1}$ or $\xi_1^{2,0}-\xi_2^{1,d_1-d_2}$, 
depending on the parity of $s$.
Thus if $\eta\not\in\gt E\oplus\ff(\xi_1^{2,d_2}-\xi_2^{1,d_1})$, then $\eta$ is not a
central element. This completes the proof.
\end{proof}

\begin{rmk}
In  \cite[Proposition 3.5]{kur}, Kurtzke 
overlooked nilpotent 
elements in $\gt{so}(\VV)$ such that 
 $\gt E$ is of codimension $1$ in $\gt z$ and $k>2$.
\end{rmk}
  
\section{Centralisers of commuting pairs}

By Vinberg's inequality, $\dim(\gt g_e)_\alpha\ge\rk\gt g$ for 
any $\alpha\in\gt g_e^*$.
A famous conjecture of Elashvili states that there is $\alpha\in\gt g_e^*$ 
such that $\dim(\gt g_e)_\alpha=\rk\gt g$. In the classical case, 
Elashvili's conjecture is proved in  \cite{fan}
and for the exceptional Lie algebras it is verified (with a computer aid) 
by W. de Graaf \cite{graaf}. In \cite{graaf}, de Graaf also showed 
that in the exceptional Lie algebras there are only three nilpotent orbits $Ge$ such that 
$\dim(\gt g_e)_x>\rk\gt g$ for all $x\in\gt g_e$. 
The result was predicted by Elashvili.

By a result of Richardson \cite{Rich-com}, 
the commuting variety
$\gt C(\gt g):=\{(x,y)\in\gt g{\times}\gt g \mid [x,y]=0\}$
 is irreducible for each reductive Lie algebra $\gt g$. 
It coincides with the closure of a $G$-saturation 
$G(\gt t,\gt t)$, where $\gt t\subset\gt g$ is a maximal torus. 
Hence $\dim(\gt g_e)_x\ge\rk\gt g$ for all $x\in\gt g_e$.
A general 
belief is that in the classical Lie algebras there is always an 
element $x\in\gt g_e$ for which the equality holds.  
The statement even appeared in the literature without a proof, \cite{sek}. 
Here we prove a slightly stronger statement. 
Set $\gt g_{(e,x)}:=(\gt g_e)_x=\gt g_e\cap\gt g_x$.

\begin{thm}\label{pary} Suppose that $\gt g$ is a classical 
simple Lie algebra and $e\in\cN(\gt g)$. 
Then there is a \ {\tt nilpotent}\, element $x\in\gt g_e$ 
such that $\dim \gt g_{(e,x)}=\rk\gt g$.
\end{thm}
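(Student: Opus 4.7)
The plan is to exhibit an explicit nilpotent $x\in\gt g_e$ in the basis $\{\xi_i^{j,s}\}$ of Section~1, and verify $\dim\gt g_{(e,x)}=\rk\gt g$ by directly solving the linear system $[x,\xi]=0$ via the commutator formula~(1.3). For $\gt g=\gt{gl}(\VV)$ I would take the downward-shift element
$$
x\ :=\ \sum_{i=1}^{k-1}\xi_{i+1}^{\,i,\,d_i-d_{i+1}}.
$$
Each summand is admissible since $d_i\ge d_{i+1}$, so the index $s=d_i-d_{i+1}$ meets the constraint $\max\{d_i-d_{i+1},0\}\le s\le d_i$, and sends $\VV[i+1]$ into $\VV[i]$; consequently $x$ drops each cyclic subspace by one, $x(\VV[1])=0$, and $x^k=0$, so $x$ is nilpotent. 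Writing $\xi=\sum c_i^{j,s}\xi_i^{j,s}$ and plugging into~(1.3), the equation $[x,\xi]=0$ becomes a triangular system, solved by induction on $j-i$: the diagonal coefficients are identified by $c_i^{i,s}=c_1^{1,s}$ whenever both sides are valid basis coefficients, the strictly upper-triangular coefficients ($i<j$) are forced to vanish in the relevant range, and the strictly lower-triangular ones are determined from the first-column values $c_i^{1,s}$. The free parameters count to $\sum_{i=1}^k(d_i+1)=n=\rk\gt{gl}(\VV)$. For $\gt g=\gt{sl}(\VV)$ the same $x$ serves: it is already traceless (being nilpotent), and the trace-zero condition on $\xi$ drops the centraliser dimension by one to $n-1=\rk\gt{sl}(\VV)$.

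For $\gt g$ symplectic or orthogonal I would form the $\sigma$-fixed analogue of $x$ using Lemma~\ref{restr} and the normalisation~(1.4). The $\sigma$-partner of $\xi_{i+1}^{i,d_i-d_{i+1}}$ is $\xi_{i'}^{(i+1)',0}$, so whenever $\{i,i+1\}$ and $\{i',(i+1)'\}$ are disjoint the symmetric combination
$$
\xi_{i+1}^{\,i,\,d_i-d_{i+1}}+\varepsilon_i\,\xi_{i'}^{\,(i+1)',\,0}
$$
(with the sign $\varepsilon_i$ dictated by the rule at the end of Section~1) lies in $\gt g_e$, acts by lowering the cyclic index on both the original and the primed blocks, and is therefore nilpotent on $\VV$. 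Summing such symmetrised terms produces a $\sigma$-fixed nilpotent $x\in\gt g_e$. The centraliser computation then proceeds by first running the type-$A$ analysis in $\widetilde{\gt g}_e=\gt{gl}(\VV)_e$ to get $\dim\widetilde{\gt g}_{(e,x)}=n$, and then intersecting with the $\sigma$-fixed subspace; the parity constraints on Jordan block sizes encoded in Lemma~\ref{restr} pin the fixed-point dimension down to $\rk\gt g$.

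The main obstacle is the case analysis in the symplectic and orthogonal settings when the involution $i\mapsto i'$ behaves non-trivially on the pair $\{i,i+1\}$ --- explicitly when $i=i'$ or $(i+1)'=i$ --- which occurs for $\gt{so}$-blocks of odd size, for $\gt{sp}$-blocks of even size, and whenever two consecutive equi-dimensional blocks are paired with each other. In these coincidence cases the naive symmetrisation of the summand $\xi_{i+1}^{i,\cdot}$ either collapses to zero or produces a non-nilpotent "rotation" on $\VV[i]\oplus\VV[i+1]$, and must be replaced by a different nilpotent element supported on the pair (for instance a scaled version of $\xi_{i+1}^{i,d_i-d_{i+1}}$ alone when $d_i=d_{i+1}$ and the sign forces the symmetrisation to vanish, possibly corrected by a nilpotent element of the centre $\gt E\subset\gt g_e$ of Section~2 which does not alter the centraliser count). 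The case split then spawns several sub-cases indexed by how the pairing meets $(i,i+1)$, each requiring a separate verification of nilpotency and of the dimension of $\gt g_{(e,x)}$. Once this combinatorial bookkeeping is carried out, a direct though tedious inspection of~(1.3) in each case delivers $\dim\gt g_{(e,x)}=\rk\gt g$ and completes the proof.
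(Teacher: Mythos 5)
Your $\gt{gl}(\VV)$-element $x=\sum_{i=1}^{k-1}\xi_{i+1}^{i,\,d_i-d_{i+1}}$ is essentially a ``transpose'' of the element used in the paper, which takes $x_+:=\sum_{i=1}^{k-1}\xi_i^{i+1,0}$ and invokes Ginzburg's theory of principal nilpotent pairs for the dimension count; your claim that a direct triangular analysis of $[x,\xi]=0$ gives $\dim\widetilde{\gt g}_{(e,x)}=n$ is plausible but nowhere carried out, and is not obviously cheaper than citing \cite{gin}. This part could probably be repaired.

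The orthogonal/symplectic case, however, has a genuine gap, and it is precisely where the paper's argument has a clever idea that your sketch lacks. After symmetrising $x$ into $\gt g=\widetilde{\gt g}^\sigma$, you need two things: (a) $\dim\widetilde{\gt g}_{(e,x)}=n$ for the \emph{symmetrised} $x$, not for the plain $\gt{gl}$-element, and (b) a way to read off $\dim\bigl(\widetilde{\gt g}_{(e,x)}\cap\gt g\bigr)$. For (a) the paper degenerates $x$ to $x_+$ along the cocharacter $a(t)$ and uses semicontinuity to get the upper bound $\le n$, then observes that the $n$ elements $e^rx^l$ ($0\le l\le k-1$, $0\le r\le d_{l+1}$) are linearly independent in $\widetilde{\gt g}_{(e,x)}$ to get $\ge n$ and, crucially, to conclude that $\widetilde{\gt g}_{(e,x)}$ is \emph{spanned} by $\{e^rx^l\}$. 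For (b) the same spanning set does the work automatically: since $e\in\gt g$ and $x\in\gt g$, the product $e^rx^l$ lies in $\gt g$ when $r+l$ is odd and in $\widetilde{\gt g}_1$ when $r+l$ is even, which pins $\dim\gt g_{(e,x)}$ to $\rk\gt g$ by a one-line parity count. Your proposal replaces this with the statement that ``the parity constraints on Jordan block sizes encoded in Lemma~\ref{restr} pin the fixed-point dimension down to $\rk\gt g$,'' but merely knowing $\dim\widetilde{\gt g}_{(e,x)}=n$ and that the space is $\sigma$-stable gives no control on the dimension of the $\sigma$-fixed part: that can be anything from $0$ to $n$. Without identifying $\widetilde{\gt g}_{(e,x)}$ with $\mathrm{span}\{e^rx^l\}$ you have no lever to compute the split.

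A secondary but real concern is the case analysis you flag for ``coincidences'' $i=i'$ or $(i+1)'=i$. You propose ad hoc replacements (dropping the symmetrisation, scaling, adding central nilpotents) but verify none of them; the suggestion that $\xi_{i+1}^{i,d_i-d_{i+1}}$ ``alone'' could be used is problematic, since when the symmetrisation vanishes that element need not lie in $\gt g$. The paper's choice of $x$ avoids all this: the sign $\esi(i,i+1,0)$ is built into the basis of $\gt g_e$ described at the end of Section~1, so the combined element automatically lies in $\gt g$ and the argument is uniform, with no case split. The combinatorial fallback you invoke at the end (``a direct though tedious inspection of~(1.3) in each case'') is exactly what the $e^rx^l$ observation is designed to avoid, and as written the proof does not go through without it.
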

\begin{proof} 
({\bf 1}) \   If $\gt g=\gt{sl}_n$, then $e$ can be included into a so called 
{\it principal nilpotent} pair 
$(e,x)$, where  
$x:=\sum_{i=1}^{k-1} \xi_{i}^{i+1,0}$ and $\dim\gt g_{(e,x)}=n-1$, 
see \cite{gin}.

({\bf 2}) \ Now assume that $\gt g\subset\gt{gl}(\VV)$ is either symplectic or 
orthogonal. 
The required element $x\in\gt g_e$ is defined as
$$ 
x:=\sum_{i=1}^{k-1} \xi_{i}^{i+1,0}+\esi(i,i{+}1,0)\xi_{(i+1)'}^{i',d_i-d_{i+1}}.
$$
Set $\widetilde{\gt g}:=\gt{gl}(\VV)$. 
Let $a\colon\, {\ff}^{^\times}\rightarrow
\GL(\VV)_e$ be the cocharacter such that $a(t){\cdot}w_i=t^iw_i$ for all
$i\le k$ and $t\in{\ff}^{^\times}$ (the same as in  (\ref{ro})).
Then   $\Ad(a(t)){\cdot}x=tx_+ + t^{-1}x_-$, where $x=x_+ + x_-$ and 
$x_+=\sum_{i=1}^{k-1} \xi_{i}^{i+1,0}$. 
As in part ({\bf 1}) of the proof, 
$e$ and $x_+$ form a principal nilpotent pair in $\widetilde{\gt g}$. 
Therefore $\dim\widetilde{\gt g}_{(e,x_+)}=n=\dim\VV$. 
Points $x_+ + t^2x_{-}$ with $t\in\ff^{^\times}$ form a dense 
(if $\ff$ is algebraically closed, then open) 
subset of the line $x_+ +\ff x_-$.  Hence,
using 
semi-continuity of dimension, one can show that also 
$\dim\widetilde{\gt g}_{(e,x)}\le n$. 
  
Consider a product of matrices $e^rx^l$ as an element of $\widetilde{\gt g}_e$.
Then  
$e^rx^l{\cdot}w_1=e^r{\cdot}w_l+v$, where 
$$
v\in \VV_1\oplus\ldots\oplus \VV_{l-1}\oplus
 \left< w_l,e^1{\cdot}w_l,\ldots, e^{r-1}{\cdot}w_l  \right>.  
$$
Hence $\dim\left<e^rx^l{\cdot}w_1\mid r,l\ge 0\right>=n$. 
Clearly each $e^rx^l$ is an element of $\widetilde{\gt g}_{(e,x)}$.
Therefore $\dim\widetilde{\gt g}_{(e,x)}\ge n$. 
Taking into account that  
$\dim\widetilde{\gt g}_{(e,x)}\le n$, we get the  equality 
$\dim\widetilde{\gt g}_{(e,x)}=n$.
The centraliser $\widetilde{\gt g}_{(e,x)}$ 
is the linear span of the vectors $e^rx^l$.   

Recall that there is a $\gt g$-invariant bilinear form on $\VV$ 
such that  
$(\xi{\cdot}v,w)=-(v,\xi{\cdot}w)$ and 
$(\eta{\cdot}v,w)=(v,\eta{\cdot}w)$
for all vectors $v,w\in\VV$, $\xi\in\gt g$, $\eta\in\widetilde{\gt g}_1$.
Hence $e^r x^l\in\gt g$ if  $r+l$ is odd and 
$e^rx^l\in\widetilde{\gt g}_1$ if $r+l$ is even. 
The centraliser of the pair $(e,x)$ in $\gt g$ is equal to 
the intersection $\widetilde{\gt g}_{(e,x)}\cap\gt g$, which has  
dimension $[(n{+}1)/2]=\rk\gt g$. 
\end{proof}

\begin{rmk} Suppose that $y=y_s+y_n$ is the Jordan decomposition of $y\in\gt g$ 
and $\gt g$ is classical. Then $\gt g_y=(\gt g_{y_s})_{y_n}$ and 
$\gt g_{y_s}$ is a direct sum of the
centre and simple classical ideals. 
Therefore Theorem~\ref{pary} is valid for all 
(not necessary simple) classical Lie algebras $\gt g$ and all 
(not necessary nilpotent)
$y\in\gt g$.
\end{rmk}

\section{Commuting varieties}\label{commv}

With a non-reductive Lie algebra $\gt q$ one can associate two 
different commuting varieties. The usual one $\gt C(\gt q)$, consisting of commuting pairs 
$(\xi,\eta)\in\gt q{\times}\gt q$, appeared in the previous section. 
In this section we consider {\it mixed} commuting varieties 
$$
\gt C^*(\gt q):=\{(x,\alpha)\in\gt q{\times}\gt q^* \mid \alpha([x,\gt q])=0\},
$$ 
associated with centralisers. These varieties are closely related to some questions 
concerning 
rings of differential operators. 
Another way to define $\gt C^*(\gt q)$ is to say that  it is the zero fibre 
of the moment map $\gt q\times\gt q^*\to\gt q^*$. 
 
The usual commuting  variety 
$\gt C(\gt g_e)$ is not always irreducible,  see \cite{fan}.
Here we show that  
$\gt C^*(\gt g_e)$ 
can be reducible  as well, even if 
$\gt g$ is of type $A$. 
However,  let us start with examples outside of type $A$.
The first of them is 
related to the following property:

\begin{equation}\label{strange}
\gt q_{\rm  reg}\cap \left(\bigcup_{\alpha\in\gt q^*_{\rm reg}} \gt q_\alpha\right) 
  =\varnothing. 
\end{equation}

\noindent
Here $\gt q^*_{\rm reg}:=\gt q^*\setminus \gt q^*_{\rm sing}$
and 
$\xi\in\gt q_{\rm reg}$ if and only if 
the stabiliser $\gt q_\xi$ has the minimal possible dimension.  

\begin{prop} Suppose that  $\gt q$ satisfies~(\ref{strange}).
Then $\gt C^*(\gt q)$ is reducible. 
\end{prop}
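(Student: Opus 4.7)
The plan is to exhibit two disjoint, non-empty Zariski-open subsets of $\gt C^*(\gt q)$; since every non-empty open subset of an irreducible variety is dense, the existence of such a pair forces reducibility.

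First, rewrite the defining condition of $\gt C^*(\gt q)$: the equation $\alpha([x,\gt q])=0$ is equivalent to $x$ annihilating $\alpha$ under the coadjoint action, i.e.\ $x\in\gt q_\alpha$. Thus
\[
\gt C^*(\gt q)=\{(x,\alpha)\in\gt q\times\gt q^*\mid x\in\gt q_\alpha\}.
\]
Let $\pi_1\colon\gt C^*(\gt q)\to\gt q$ and $\pi_2\colon\gt C^*(\gt q)\to\gt q^*$ be the two projections. By the standard lower-semicontinuity of the dimension of the (co)adjoint stabiliser, both $\gt q_{\rm reg}$ and $\gt q^*_{\rm reg}$ are open in their ambient spaces, so
\[
A:=\pi_1^{-1}(\gt q_{\rm reg}),\qquad B:=\pi_2^{-1}(\gt q^*_{\rm reg})
\]
are open in $\gt C^*(\gt q)$. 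Both are non-empty: $(x,0)\in A$ for any $x\in\gt q_{\rm reg}$, and $(0,\alpha)\in B$ for any $\alpha\in\gt q^*_{\rm reg}$.

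The hypothesis~(\ref{strange}) now reads precisely as $A\cap B=\varnothing$: a point of $A\cap B$ would be a pair $(x,\alpha)$ with $x\in\gt q_{\rm reg}$, $\alpha\in\gt q^*_{\rm reg}$ and $x\in\gt q_\alpha$, which is forbidden by~(\ref{strange}). Two disjoint non-empty open subsets cannot coexist in an irreducible variety, so $\gt C^*(\gt q)$ must be reducible.

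There is essentially no real obstacle here — the content of the proof is packaged into the hypothesis, which is tailor-made to yield the disjointness of $A$ and $B$. The only auxiliary input is the standard openness of the regular loci.
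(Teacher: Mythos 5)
Your proof is correct and follows the same route as the paper: the sets you call $A=\pi_1^{-1}(\gt q_{\rm reg})$ and $B=\pi_2^{-1}(\gt q^*_{\rm reg})$ are exactly the open subsets $U_1=\gt C^*(\gt q)\cap(\gt q_{\rm reg}\times\gt q^*)$ and $U_2=\gt C^*(\gt q)\cap(\gt q\times\gt q^*_{\rm reg})$ used in the paper, and the disjointness is condition~(\ref{strange}) read off verbatim. You only add the explicit remark that $A$ and $B$ are non-empty (via $(x,0)$ and $(0,\alpha)$), which the paper leaves implicit.
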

\begin{proof} Clearly $U_1:=\gt C^*(\gt q)\cap(\gt q_{\rm reg}{\times}\gt q^*)$ 
and $U_2:=\gt C^*(\gt q)\cap(\gt q{\times}\gt q^*_{\rm reg})$ are open subsets of 
$\gt C^*(\gt q)$ and according to (\ref{strange}), $U_1\cap U_2=\varnothing$.
\end{proof}

\begin{ex}\label{sp42-com} 
Let $e\in\cN(\gt{sp}_6)$ be  defined by 
the partition $(4,2)$. Then $\gt g_e$ has a basis 
$$
\xi_1^{1,1}, \xi_2^{2,1}, \xi_1^{1,3}, \xi=\xi_{1}^{2,0}+\xi_2^{1,2},
\eta=\xi_1^{2,1}-\xi_{1}^{2,3}  
$$
with the only non-trivial commutators being
$[\xi,\xi_1^{1,1}]=[\xi_2^{2,1},\xi]=\eta$ and  $[\eta,\xi]=2\xi_1^{1,3}$.
Suppose that $\alpha\in(\gt g_e^*)_{\rm reg}$ and 
$x\in(\gt g_e)_\alpha$. 
Since $\alpha$ is regular, it is non-zero on 
$[\gt g_e,\gt g_e]=\left<\xi_1^{1,3},\eta\right>$. 
On the other hand $\alpha([x,\gt g_e])=0$,  hence 
$\dim [x,\gt g_e]\le 1$ and $\dim(\gt g_e)_x\ge 4>\rk\gt g$.
Therefore $x$ is not regular and condition (\ref{strange}) holds for 
$\gt g_e$.
 \end{ex}

\begin{rmk}
The simplest example of a Lie algebra satisfying condition (\ref{strange}) 
is a Heisenberg algebra. The centralisers of subregular elements (given by partitions 
$(2n{-}2,2)$) in  $\gt{sp}_{2n}$ also satisfy~(\ref{strange}).   
\end{rmk} 

The second example is slightly different. 

\begin{prop} Suppose that for each $\alpha\in(\gt g_e^*)_{\rm reg}$
the stabiliser $(\gt g_e)_\alpha$ consists of nilpotent elements, but 
$\gt g_e$ itself contains semisimple elements. Then $\gt C^*(\gt g_e)$ 
is reducible.
\end{prop}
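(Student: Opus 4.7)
The plan is to mimic the proof of the preceding proposition: I will exhibit a proper irreducible closed subset $C_1\subseteq\gt C^*(\gt g_e)$ together with a point of $\gt C^*(\gt g_e)$ lying off $C_1$. My candidate is the closure of the ``principal'' locus
\[
U:=\{(x,\alpha)\in\gt g_e\times(\gt g_e^*)_{\rm reg}\mid x\in(\gt g_e)_\alpha\},
\]
which is the open subset of $\gt C^*(\gt g_e)$ cut out by the condition $\alpha\in(\gt g_e^*)_{\rm reg}$. On the regular locus the stabiliser dimension is constant and equal to $\ind\gt g_e$, so the second projection realises $U$ as a vector sub-bundle of the trivial bundle over $(\gt g_e^*)_{\rm reg}$ with fibre $\gt g_e$. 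Since $(\gt g_e^*)_{\rm reg}$ is a dense open subset of the irreducible affine space $\gt g_e^*$, it is irreducible, and hence so are $U$ and its closure $C_1:=\overline{U}$.

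Next I would constrain the first projection of $C_1$ using the hypothesis. For every $\alpha\in(\gt g_e^*)_{\rm reg}$ each element of $(\gt g_e)_\alpha$ is nilpotent, so $\pi_1(U)\subseteq\cN(\gt g)\cap\gt g_e$; since the nilpotent cone $\cN(\gt g)$ is Zariski closed in $\gt g$, passing to closures yields $\pi_1(C_1)\subseteq\cN(\gt g)$. Now choose a nonzero semisimple element $s\in\gt g_e$, guaranteed by the hypothesis. The pair $(s,0)$ lies trivially in $\gt C^*(\gt g_e)$, but it cannot lie in $C_1$, because $s\notin\cN(\gt g)$. Any irreducible component of $\gt C^*(\gt g_e)$ containing $(s,0)$ is therefore distinct from $C_1$, and reducibility follows.

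The step I expect to be the most delicate is the irreducibility of $U$, i.e.\ verifying that the family of stabilisers $\{(\gt g_e)_\alpha\}_{\alpha\in(\gt g_e^*)_{\rm reg}}$ assembles into a genuine vector bundle. This reduces to the observation that the linear maps $\gt g_e\to\gt g_e^*$ given by $x\mapsto\alpha([x,\cdot])$ have constant rank on the regular locus, which is the defining property of $(\gt g_e^*)_{\rm reg}$. Once this bundle structure is in place, the remaining ingredients---closedness of $\cN(\gt g)$ and the witness $(s,0)\in\gt C^*(\gt g_e)\setminus C_1$---are immediate, and the proof is complete.
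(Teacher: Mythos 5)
Your argument is correct and rests on the same two observations as the paper's proof: over the regular locus of $\gt g_e^*$ the hypothesis forces the first components to be nilpotent, while a non-nilpotent element of $\gt g_e$ paired with the zero covector gives a point of $\gt C^*(\gt g_e)$ away from that locus. The paper packages this more economically as two disjoint nonempty open subsets ($\alpha$ regular, respectively $x$ non-nilpotent), and indeed your vector-bundle irreducibility step for $U$ is not needed: since $U$ is nonempty and open in $\gt C^*(\gt g_e)$, irreducibility of $\gt C^*(\gt g_e)$ would already force $(s,0)\in\overline{U}\subseteq\bigl(\cN(\gt g)\cap\gt g_e\bigr)\times\gt g_e^*$, a contradiction.
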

\begin{proof}
Clearly $U_1:=\{(\gt g_e)_\alpha{\times}\{\alpha\} \mid \alpha\in(\gt g_e^*)_{\rm reg}\}$
is an open subset of $\gt C^*(\gt g_e)$. On the other hand, there is an
open subset in $\gt g_e$ containing no nilpotent  elements. Its 
preimage $U_2\subset\gt C^*(\gt g_e)$ is again an open subset. By our assumptions 
$U_1\cap U_2=\varnothing$.  
\end{proof}

\noindent
There are such nilpotent elements in the orthogonal Lie algebra. 

\begin{ex}\label{comv-so7} Let $e\in\cN(\gt{so}_7)$ be  defined by the
partition $(3,2,2)$. Then $x:=\xi_2^{2,0}-\xi_3^{3,0}\in\gt g_e$  is a 
semisimple element, which is unique up to conjugation and multiplication by scalars.   
Suppose that $\alpha\in\gt g_e^*$ is such that 
$(\gt g_e)_\alpha$ does not consist of nilpotent elements. 
Since $(\gt  g_e)_\alpha$ is the Lie algebra of an algebraic group
$(G_e)_\alpha$, it contains a semisimple element, we may assume that $x$.
Then $\alpha$ is zero on $[x,\gt g_e]$.  Note that the centraliser of $x$ 
in $\gt g_e$ is three
dimensional. More precisely, it is generated by $x$, $\xi_1^{1,1}$ and 
$\eta:=\xi_2^{2,1}+\xi_3^{3,1}$. 
Since $x$ is semisimple, 
$\alpha=a_1((\xi_2^{2,0})^*-(\xi_3^{3,0})^*)+
  a_2(\xi_1^{1,1})^*+a_3((\xi_2^{2,1})^*+(\xi_3^{3,1})^*)$, 
where $a_1,a_2,a_3\in\ff$.  
It not difficult to see that $(\gt g_e)_\alpha$ contains elements 
$\xi_1^{2,1}-\xi_3^{1,2}$, 
$\xi_1^{3,1}+\xi_2^{1,2}$, $\xi_1^{1,1}$, and, by the assumption, $x$. Hence 
$\dim(\gt g_e)_\alpha\ge 4$ and $\alpha\in(\gt  g_e^*)_{\rm sing}$.
\end{ex}

\begin{rmk} 
It is possible to  show that 
if $e\in\cN(\gt{so}(\VV))$  is given by a partition 
$(d_1+1,\ldots,d_k+1)$ 
with $d_1$ being even and all other $d_i$ odd, then 
$(\gt g_e)_\alpha$ consists of nilpotent elements for each 
$\alpha\in(\gt  g_e^*)_{\rm reg}$. Note that $\gt g_e$ contains semisimple  
elements, if  $k>1$. 
\end{rmk}

Let us say that a point $\gamma\in\gt g_e^*$ is {\it generic} and 
$(\gt g_e)_\gamma$ is a {\it generic stabiliser} if 
there is an open subset  
$U_0\subset \gt g_e^*$ such that $(\gt g_e)_\delta$ is 
conjugate to $(\gt g_e)_\gamma$ for each $\delta\in U_0$. 
Suppose that $\gt g=\gt{gl}(\VV)$. 
Consider a point $\alpha=\sum_{i=1}^{k} a_i(\xi_i^{i,d_i})^*\in\gt g_e^*$, 
where $a_i$ are pairwise distinct non-zero numbers.  
Then, as was proved in \cite{fan}, $\alpha$ is a 
generic point 
in $\gt g_e^*$ and $\gt h:=(\gt g_e)_\alpha=\left<\xi_i^{i,s}\right>_\ff$ is 
a generic stabiliser for the coadjoint action of $\gt g_e$. 
Set $\gt h^*:=\left<(\xi_i^{i,s})^*\right>_\ff\subset\gt g_e^*$. 
Then $\{\gamma\in\gt g_e^* \mid\ad^*(\gt h)\gamma=0\}=\gt h^*$
and $\gt C_0:=\overline{ G_e(\gt h{\times}\gt h^*)}$ is an 
irreducible component of $\gt C^*(\gt g_e)$.  
Likewise, if $e\in\gt{sp}(\VV)$, then  $\gt C_0\cap\gt C^*(\gt{sp}(\VV)_e)$ 
is an irreducible component of the mixed commuting variety 
associated with $\gt{sp}(\VV)_e$.


\begin{ex}\label{cv-min} Let $e$ be a minimal nilpotent element 
in $\gt g=\gt{sl}_{n+2}$
with $n>1$.
Then the mixed commuting variety $\gt C^*(\gt g_e)$ has at least 
two irreducible components. 
\end{ex}
\begin{proof}
Let us include $e$ into an $\gt{sl}_2$-triple
$\left<e,h,f\right>$ in $\gt g$. Then $h$ defines a $\mathbb Z$-grading 
of $\gt g$:
$$
\gt g(-2)\oplus\gt g(-1)\oplus\gt g(0)\oplus\gt g(1)\oplus\gt g(2),
$$
where $\gt g(-2)=\mathbb F f$, $\gt g(2)=\mathbb F e$, 
$\gt g(-1)\subset\gt g_f$, and $\gt g(1)\subset\gt g_e$. 
The centraliser  
$\gt g_e$ is a semiderect product of 
$\gt{gl}_n=\gt g(0)_e$ and a (normal) Heisenberg 
Lie algebra $\gt n=V\oplus\ff e$, where 
$V=\gt g(1)\cong \ff^n{\oplus}(\ff^n)^*$ as a 
$\gt{gl}_n$-module.
Making use of the Killing form, we identify 
$\gt g_e^*$ and $\gt g_f$. Let $\chi_f$ be the 
element of $\gt g_e^*$ corresponding to $f$. 
Fix the $h$-invariant decomposition 
$\gt g_e^*=\gt{gl}_n^*\oplus V^*\oplus (\ff e)^*$.

The theory of $\gt{sl}_2$-actions tells us that 
  $V^*=\ad^*(V)\chi_f$ 
and that the stabiliser of a point $\gamma+0+\chi_f$, 
with $\gamma\in\gt{gl}_n^*$, 
is equal to $(\gt{gl}_n)_\gamma\oplus\ff e$. 
Let $N\subset G_e$ be the unipotent radical. Then 
$\Lie N=\gt n$ and  $N(\gt{gl}_n^*+\ff\alpha)$ is an open subset of
$\gt g_e^*$. Taking its preimage in $\gt C^*(\gt g_e)$, we obtain  
that the $N$-saturation 
$$
Y:=N\left\{ (\gt{gl}_n)_\gamma\oplus\ff e)\times(\gamma+0+\ff^*\chi_f) 
 \mid \gamma\in\gt{gl}_n^*\right\}
$$
is an open subset of $\gt C^*(\gt g_e)$. 
It is irreducible, because the usual commuting variety 
associated with $\gt{gl}_n$ ($\cong\gt{gl}_n^*$) is irreducible by a result of 
Richardson \cite{Rich-com}.  Thus $\overline{Y}$ is an irreducible component of 
$\gt C^*(\gt g_e)$. A generic point $\alpha\in\gt g_e^*$ can be chosen as 
$\alpha=\gamma+\chi_f$, where $\gamma$ is a generic point in $\gt{gl}_n^*$. 
Therefore $\overline{Y}$ coincides with the irreducible component $\gt C_0$ 
related to generic stabiliser. 


Suppose that $((x,y,z)\times(\gamma,\beta,\delta))\in Y$.
Then there is unique $\xi\in V$ such that
$\beta=\ad^*(\xi)\delta$. Hence $y=[\xi,x]$ by the construction of $Y$. 

Take a pair $((x,y,z)\times(\gamma,\beta,0))\in \gt g_e{\times}\gt g_e^*$.
It belongs to $\gt C^*(\gt g_e)$ if and only if 
 $(\gamma+\beta)([x+y,\gt{gl}_n])=0$ and 
$\beta([x,V])=0$. Fix $\beta\in V^*$ and 
$x\in(\gt{gl}_n)_\beta$. 
Then the second condition is automatically satisfied and 
the first one can be rewritten as 
$\ad^*(x)\gamma+\ad^*(y)\beta=0$. 
Varying  $\gamma$ we can get any element 
of $(\gt{gl}_n/(\gt{gl}_n)_x)^*$ on the first place in this sum. Thus, if 
$\ad^*(y)\beta$ is zero on $(\gt{gl}_n)_x$, i.e., if
$\beta([y,(\gt{gl}_n)_x])=0$, then there is $\gamma\in \gt{gl}_n^*$ such that 
$((x,y,z)\times(\gamma,\beta,0))\in\gt C^*(\gt g_e)$. 

Suppose that  
$((x,y,z)\times(\gamma,\beta,0))\in\overline{Y}$. 
Then there are curves $\{\xi(t)\}\subset V$,  and 
$\{x(t)\}\subset\gt{gl}_n$ such that 
$\lim_{t\to 0} x(t)=x$, 
$\lim_{t\to 0} \ad^*(\xi(t))t\chi_f=\beta$, and 
$\lim_{t\to 0} [\xi(t),x(t)]=y$.
 Clearly this is possible only if either $\beta$ or $x$ or $y$ is zero. 

If $n>1$, then there are non-zero $x\in\gt{gl}_n$ and $\beta\in V^*$
such that $x\in(\gt{gl}_n)_\beta$. Since $\ad^*(\gt{gl}_n)\beta\ne V^*$, 
 there is also a non-zero $y\in V$ such that 
$((x,y,z)\times(\gamma,\beta,0))\in\gt C^*(\gt g_e)$. 
Therefore $\gt C^*(\gt g_e)$ is reducible. 
\end{proof}

\begin{rmk} It seems that the mixed commuting 
variety $\gt C^*(\gt g_e)$ considered in Example~\ref{cv-min}
has exactly two irreducible components. The first one 
is $\overline{Y}$ and the closure of 
$$   
\{(x,y,z)\times(\gamma,\beta,0) \mid  (\gt{gl}_n)_\beta\cong\gt{gl}_{n-1},
 x\in(\gt{gl}_n)_\beta, \beta([y,(\gt{gl}_n)_x])=0, 
 \ad^*(x)\gamma+\ad^*(y)\beta=0 \}
$$
is the second.
\end{rmk}

If $n=1$, i.e., the minimal nilpotent element has only two Jordan blocks, 
then the argument of Example~\ref{cv-min} does not work. This is not a coincidence.
As we will prove below, 
$\gt C^*(\gt g_e)$ is irreducible for all nilpotent elements with at most two 
Jordan blocks. Similar result for $\gt C(\gt g_e)$ was obtained 
by Neubauer and Sethuraman in \cite{NS}.

\begin{thm}\label{irred} 
Suppose that $e\in\cN(\gt{gl}(\VV))$ has at most two Jordan blocks. Then 
the mixed commuting variety $\gt C^*(\gt g_e)$ is irreducible.
\end{thm}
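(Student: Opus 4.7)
If $e$ has a single Jordan block then, by Section~1, $\gt g_e=\gt E$ is abelian, so $[x,\gt g_e]=0$ for every $x$ and $\gt C^*(\gt g_e)=\gt g_e\times\gt g_e^*$ is trivially irreducible. From now on I assume that $e$ has exactly two Jordan blocks of sizes $d_1+1\ge d_2+1$; then $n=d_1+d_2+2$ and, by the result of \cite{fan}, $\ind\gt g_e=n=\rk\gt g$.

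I would work with the second projection $\pi\colon\gt C^*(\gt g_e)\to\gt g_e^*$, whose fibre over $\alpha$ is the stabiliser $(\gt g_e)_\alpha$. Over the regular locus $(\gt g_e^*)_{\rm reg}$ the fibre dimension is constantly $n$, so $U:=\pi^{-1}((\gt g_e^*)_{\rm reg})$ is a rank-$n$ vector bundle over an irreducible base and is therefore irreducible of dimension $\dim\gt g_e+n$. Its closure is the component $\gt C_0=\overline{G_e(\gt h\times\gt h^*)}$ described before Example~\ref{cv-min}. The theorem is thus reduced to the dimension bound
\[
\dim\pi^{-1}\bigl((\gt g_e^*)_{\rm sing}\bigr)<\dim\gt g_e+n.
\]

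To obtain this bound I would stratify the singular locus by $S_k:=\{\alpha\in\gt g_e^*\mid\dim(\gt g_e)_\alpha\ge n+k\}$ for $k\ge 1$. Since $\dim\pi^{-1}(S_k\setminus S_{k+1})\le\dim S_k+(n+k)$, it is enough to verify the codimension estimate $\codim_{\gt g_e^*}S_k\ge k+1$ for every $k\ge 1$. I would carry this out by an explicit computation of the Poisson tensor $M(\alpha)=\bigl(\alpha([\xi_i^{j,s},\xi_{i'}^{j',s'}])\bigr)$ in the dual basis $\{(\xi_i^{j,s})^*\}$. When $d_1=d_2$ the algebra $\gt g_e$ is the truncated current (Takiff) Lie algebra $\gt{gl}_2\otimes\ff[t]/(t^{d_1+1})$, and $M(\alpha)$ has a transparent $2\times 2$ block-over-polynomial form whose rank drops are easy to count. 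When $d_1>d_2$ the only modification is that the central elements $\xi_1^{1,s}$ with $d_2<s\le d_1$ enlarge $\gt g_e$ without contributing new rows or columns to the rank-relevant submatrix of $M$; the codimension count is then a minor variant of the equal-block case. The grading by $\rho$ (see (\ref{ro})) gives a convenient filtration for this bookkeeping, since $S_k$ is stable under $\rho(\ff^\times)$ and one may reduce to its ``leading terms''.

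\textbf{Main obstacle.} The estimate $\codim S_k\ge k+1$ is strictly stronger than the codimension-two property $\codim(\gt g_e^*)_{\rm sing}\ge 2$, and it fails in general: its collapse for elements with three or more blocks is precisely what powers the reducibility example~\ref{cv-min}. Making the estimate rigorous comes down to a case-by-case tracking of the rank-stratification of the matrix $M(\alpha)$, using the fact that for two Jordan blocks the commutator relation (\ref{commutator}) confines the off-diagonal interactions between the $\VV[i]$'s to a single pair of ``root spaces'' $\{\xi_1^{2,s}\}$ and $\{\xi_2^{1,s}\}$, leaving enough independent defining conditions on $\alpha$ to witness the required drop in dimension of $S_k$.
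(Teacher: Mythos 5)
Your reduction is where the argument breaks. From $\dim\pi^{-1}\bigl((\gt g_e^*)_{\rm sing}\bigr)<\dim\gt g_e+n$ you cannot conclude that $\gt C^*(\gt g_e)=\overline{U}$: a set of smaller dimension lying over the singular locus is perfectly capable of being a separate irreducible component, unless you know in advance that every component of $\gt C^*(\gt g_e)$ has dimension at least $\dim\gt g_e+n$. No such lower bound is available. The variety $\gt C^*(\gt g_e)$ is the zero fibre of the moment map on $\gt g_e\times\gt g_e^*$, and the general bounds one gets (each component is coisotropic, or, by Krull's theorem, is cut out by at most $\dim\gt g_e-\dim\gt z$ non-trivial equations) only give $\dim\ge\dim\gt g_e$, respectively $\dim\ge\dim\gt g_e+\dim\gt z$; for the partition $(m,n)$ with $m\ge n$ one has $\dim\gt z=m$ while $\rk\gt g=m+n$, so the guaranteed bound falls short by $n$. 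Hence, even granting your stratified estimate $\codim S_k\ge k+1$ for all $k\ge1$, a component of dimension between $\dim\gt g_e$ and $\dim\gt g_e+n-1$ sitting entirely inside $\pi^{-1}\bigl((\gt g_e^*)_{\rm sing}\bigr)$ is not excluded. What must actually be shown is the inclusion $\pi^{-1}\bigl((\gt g_e^*)_{\rm sing}\bigr)\subset\overline{U}$, i.e.\ that every pair $(x,\alpha)$ with $\alpha$ singular is a limit of pairs with regular second component; a dimension count alone cannot deliver this.

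The second gap is that the estimate $\codim S_k\ge k+1$ — which you yourself flag as the main obstacle — is nowhere established: for $k=1$ it is the codimension property of Section~5, but for intermediate $k$ your proposal only promises a ``case-by-case tracking'' of the rank stratification of $M(\alpha)$, and the assertion that the case $d_1>d_2$ is ``a minor variant'' of the Takiff case is not substantiated. So as written this is a plan, not a proof. The paper takes a different and fully explicit route: it splits off the centre of $\gt g_e$ and the annihilator of $[\gt g_e,\gt g_e]$, encodes the remaining conditions as the matrix equations $AX=BY$, $CX=BZ$, $CY=AZ$ with $A,B,C$ in the centraliser of a regular nilpotent element of $\gt{gl}(W)$, and proves irreducibility of the solution set in Lemma~\ref{troyki} by degenerating every stratum (indexed by the corank of $B$, after using the cyclic symmetry of the equations to assume $\rk B$ is maximal) into the open chart where $B$ is invertible. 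To rescue your approach you would need both the unproved codimension estimates and an explicit limiting argument placing the singular fibres inside $\overline{U}$ — at which point you would essentially be redoing that degeneration.
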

\begin{proof} For regular nilpotent elements the statement is clear. 
Therefore assume that $e$ is given by a partition $(m,n)$ with $m\ge n$.
Let $\gt z$ be the centre of $\gt g_e$ and 
${\rm Ann}([\gt g_e,\gt g_e])\subset\gt g_e^*$ the annihilator of 
the derived algebra $[\gt g_e, \gt g_e]$. 
Suppose that $\{(\xi,\alpha)\}\in\gt C^*(\gt g_e)$.
Then also 
$(\xi+\gt z)\times (\alpha+{\rm Ann}([\gt g_e,\gt g_e])\subset\gt C^*(\gt g_e)$.
The centre $\gt z$ is the linear span of vectors 
$\xi_1^{1,s}+\xi_2^{2,s}$ with $0\le s< m$.  
The derived algebra $[\gt g_e,\gt g_e]$ is spanned 
by vectors $\xi_i^{j,s}$ with $i\ne j$ and 
$(\xi_1^{1,s}-\xi_2^{2,s})$. Let us choose
complementary subspaces to $\gt z$ (in $\gt g_e$) and 
to ${\rm Ann}([\gt g_e,\gt g_e])$ (in $\gt g_e^*$) consisting 
of the elements $\xi$ and $\alpha$ of the following form: 
$$
\begin{array}{l}
\xi=\sum\limits_{i=0}^{n-1} a_{i+1}\xi_1^{2,i}+
 \sum\limits_{i=0}^{n-1} c_{i+1}\xi_2^{2,i}+
\sum\limits_{i=0}^{n-1} b_{i+1}\xi_2^{1,i+m-n} \enskip \text{ and } \\[1.5ex]
\alpha=\sum\limits_{i=0}^{n-1} x_{i+1}(\xi_1^{2,i})^*+
 \sum\limits_{i=0}^{n-1} z_{i+1}( (\xi_1^{1,m-n+i})^*-(\xi_2^{2,m-n+i})^* ) +
\sum\limits_{i=0}^{n-1} y_{i+1} (\xi_2^{1,i+m-n})^*, \\
\end{array}
$$
for some  $a_i,b_i,c_i,x_i,z_i,y_i\in\ff$.
We will prove irreducibility for the set of   ``commuting" pairs
$(\xi,\alpha)$. 

Set $X:=(x_1,\ldots,x_n)^t$, $Y:=(y_1,\ldots,y_n)^t$, and 
$Z:=(z_1,\ldots,z_n)^t$. Consider $X$, $Y$, and $Z$ as vectors 
of an $n$-dimensional vector space $W$. 
Let $A,\,B$, and $C$ be the upper triangular $n{\times}n$ matrices with entries 
$a_i,\,b_i$, and $c_i$ on the $i$th diagonal line. So the first line of $A$
is $(a_1,a_2,\ldots,a_{n})$, the second 
$(0,a_1,a_2,\ldots,a_{n-1})$, and so on. 
Note that these matrices lie in the centraliser 
$\gt{gl}(W)_{\hat e}$ of a regular nilpotent element $\hat e$. Hence they  
commute with each other.   
The mixed commuting variety 
$\gt C^*(\gt g_e)$ is defined by equations of three types  
$\alpha([\xi,\xi_2^{2,s}])=0$, $\alpha([\xi,\xi_1^{2,s}])=0$, and 
$\alpha([\xi,\xi_2^{1,s}])=0$. Take the first of them with $s=0$. Then 
we get the following 
$\sum_{i=1}^{n} b_iy_i  -\sum_{i=1}^{n} a_ix_i=0$.   
The vector $\xi_2^{2,1}$ will give us that
$\sum_{i=1}^{n-1} b_iy_{i+1}=\sum_{i=1}^{n-1} a_i x_{i+1}$. 
In matrix terms this can be expressed as $AX=BY$. 
Explicitly writing down equations of all three types one can 
deduce that $\gt C^*(\gt g_e)$ is defined by
the matrix equations 
\begin{equation}\label{2reg}
AX=BY, \ CX=BZ, \  CY=AZ.
\end{equation}
Thus our problem is reduced to a simple exercise in linear algebra.
The following lemma solves this exercise and thereby completes the proof. 
\end{proof}

\begin{lm}\label{troyki}
Suppose that $W$ is an $n$-dimensional vector space and 
$\hat e\in\gt{gl}(W)$ is a regular nilpotent element. 
 Let $P$ be the set of six-tuples 
$(A,B,C;X,Y,Z)$, where $A,B,C\in\gt{gl}(W)_{\hat e}$, 
$X,Y,Z\in W$, satisfying equations~(\ref{2reg}). 
Then $P$ is irreducible. 
\end{lm}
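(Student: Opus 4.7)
The plan is to reformulate $P$ as a jet scheme of a determinantal variety, parametrize an irreducible open dense subset of the expected dimension, and show its closure is all of $P$.

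First I would set up the identifications. The centraliser $R:=\gt{gl}(W)_{\hat{e}}$ is canonically isomorphic to $\ff[T]/T^n$ (sending $\hat{e}\mapsto T$), and $W$ is a free cyclic $R$-module, so fixing a cyclic generator gives an $R$-isomorphism $W\cong R$. Under these, each of $A,B,C,X,Y,Z$ becomes an element of $R$, and the matrix equations~(\ref{2reg}) assert precisely the vanishing of the three $2\times 2$ minors of
\[
M\;=\;\begin{pmatrix} A & B & C\\ Y & X & Z\end{pmatrix}\in R^{2\times 3}.
\]
Thus $P$ is the $(n-1)$-st jet scheme $J_{n-1}(X_0)$ of the affine variety $X_0\subset\ff^6$ of rank-$\le 1$ $2\times 3$ matrices, namely the affine cone over the Segre embedding $\mathbb{P}^1\times\mathbb{P}^2\hookrightarrow\mathbb{P}^5$.

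Next I would produce an irreducible open subset of dimension $4n$. Set $P_{\star}:=\{M\in P\mid A\in R^{\times}\}$. The morphism
\[
\Phi\colon R^{\times}\times R^3\to P_{\star},\quad (A,B,C,Y)\longmapsto (A,B,C,\,A^{-1}BY,\,Y,\,A^{-1}CY),
\]
is well-defined: the first and third minor relations hold by construction, and the middle one $CX-BZ=A^{-1}(CB-BC)Y=0$ vanishes by commutativity of $R$; its inverse is projection onto $(A,B,C,Y)$. Hence $P_{\star}$ is open in $P$, irreducible, and of dimension $4n$.

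To show $P=\overline{P_{\star}}$ I would use the truncation $\pi\colon P\to X_0$, $M\mapsto M\bmod T$. Over the smooth locus $X_0\setminus\{0\}$ the variety $X_0$ is smooth, so $\pi$ restricts to the jet bundle of $X_0$, a vector bundle of rank $4(n-1)$. Therefore $\pi^{-1}(X_0\setminus\{0\})$ is open in $P$, irreducible of dimension $4n$, and contains $P_{\star}$ as a nonempty open subset; hence its closure coincides with $\overline{P_{\star}}$. The complementary fiber $\pi^{-1}(0)$ consists of matrices $M=TM^*$ with $M^*\in R^{2\times 3}/T^{n-1}R^{2\times 3}$ such that $M^*\bmod T^{n-2}$ lies in the analogous variety over $\ff[T]/T^{n-2}$ and whose $T^{n-2}$-coefficient in $\ff^{2\times 3}$ is free; by induction on $n$ this fiber is irreducible of dimension $4(n-2)+6=4n-2$.

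Since $\dim\pi^{-1}(0)<4n$, no irreducible component of $P$ of dimension $4n$ can lie in $\pi^{-1}(0)$. The main obstacle is to rule out lower-dimensional hidden components of $P$ supported in $\pi^{-1}(0)$, i.e., to establish equidimensionality of $P$. This follows from classical results on jet schemes of normal Cohen--Macaulay toric varieties with rational singularities (a class that includes the affine Segre cone $X_0$), generalizing Musta\c{t}\u{a}'s theorem for locally complete intersections. A self-contained alternative is to construct, for each $M\in\pi^{-1}(0)$, an explicit polynomial curve $M(s)\in P$ with $M(0)=M$ and $M(s)\in P_{\star}$ for $s\ne 0$; such a curve is built by deforming $A$ to $A+s$ together with compatible deformations of the five remaining entries, solved using the given relations $AX=BY$, $CX=BZ$, $CY=AZ$ and commutativity of $R$. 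Either route yields $\pi^{-1}(0)\subset\overline{P_{\star}}$, hence $P=\overline{P_{\star}}$ is irreducible.
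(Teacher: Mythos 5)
The jet-scheme reformulation is elegant and correct: identifying $\gt{gl}(W)_{\hat e}\cong\ff[T]/T^n$ and $W\cong\ff[T]/T^n$ does make equations~(\ref{2reg}) exactly the vanishing of the $2{\times}2$ minors of a $2{\times}3$ matrix over $\ff[T]/T^n$, so $P$ is indeed the $(n{-}1)$-jet scheme of the affine cone $X_0$ over $\mathbb P^1{\times}\mathbb P^2$. Your parametrization of the open chart $P_\star$ is also correct (and mirrors the paper's $U=\{b_1\ne 0\}$ with $B$ playing the role of $A$), and so is the identification $\pi^{-1}(0)\cong P_{n-2}\times\ff^6$ with dimension $4n-2$.

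The genuine gap is exactly where you flag it: nothing in the argument excludes $\pi^{-1}(0)$ from being an irreducible component of $P$ of dimension $4n-2$. Neither proposed fix closes this. For fix (a): $X_0$ has codimension $2$ in $\ff^6$ but is cut out by $3$ minors, so it is \emph{not} a local complete intersection, and Musta\c t\u a's theorem does not apply; I am not aware of an established generalization to normal Cohen--Macaulay toric varieties with rational singularities that gives irreducibility (not just equidimensionality) of jet schemes, and citing an unproven extension here would in effect be assuming the lemma. For fix (b): for $M\in\pi^{-1}(0)$ every entry of $M$ lies in the maximal ideal $(T)$, so $A$ is nilpotent; the curve $A\mapsto A+s$ then forces $X(s)=(A+s)^{-1}BY=s^{-1}(1+A/s)^{-1}BY$, which diverges as $s\to 0$ unless $BY$ happens to vanish to a high enough order. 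So "compatible deformations of the five remaining entries" are not there for the taking; one must change what is held fixed, which is precisely the content of the paper's argument.

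The paper avoids this obstacle by a different stratification: after using the cyclic symmetry of~(\ref{2reg}) to assume $\rk B\ge\rk A,\rk C$, it stratifies by $\rk B=n-d$ and, inside each stratum $P_d$, works on the dense open locus $(P_d)^\circ$ where $x_ny_nz_n\ne 0$ (equivalently, $X,Y,Z$ are all cyclic vectors). There, one keeps $X,Y,Z$ \emph{fixed} and perturbs $(A,B,C)\mapsto(A+\lambda\mathcal E_A,\,B+\lambda E,\,C+\lambda\mathcal E_C)$, choosing $\mathcal E_A,\mathcal E_C\in GL(W)_{\hat e}$ with $\mathcal E_A X=Y$ and $\mathcal E_C X=Z$, which exist precisely because $X,Y,Z$ lie in the open $GL(W)_{\hat e}$-orbit. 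This perturbation lands in $U$ for $\lambda\ne 0$ and converges as $\lambda\to 0$ because the vectors, not the operators, are kept fixed. The key point your stratification misses is that the paper's dense locus $(P_d)^\circ$ is chosen so that the vectors are cyclic even though $A,B,C$ are singular; in your stratum $\pi^{-1}(0)$ the vectors $X,Y,Z$ are all non-cyclic, and that is exactly the regime where the perturbation argument is not available. To salvage your approach you would have to prove directly that $\pi^{-1}(0)\subset\overline{P_\star}$, and the cleanest way I see to do that is to import the paper's rank stratification anyway.
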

\begin{proof}
Suppose that $\hat e$ is written in the normal Jordan form. 
Keep notation of Theorem~\ref{irred}.
Let $U\subset P$ be an open subset, where  
$b_1\ne 0$ or, which is the same, $\rk B=n$. 
Then 
$U=\{(A,B,C;X,B^{-1}AX,B^{-1}CX)\mid b_1\ne 0\}$ 
is a $4n$-dimensional irreducible affine variety. 
On $U$ the third equation $CY=AZ$ reduces 
to  $CB^{-1}AX=AB^{-1}CX$
and is satisfied automatically because $CB^{-1}A=AB^{-1}C$.

Equations~(\ref{2reg}) are invariant under simultaneous cyclic permutation of 
$(A,B,C)$ and $(Z,Y,X)$. Therefore we may consider only those solutions, where  
$\rk B\ge \max(\rk A,\rk C)$. 
Note  that $\rk B=n-d$ (with $d>0$) if and only if 
$b_1=\ldots=b_d=0$ and $b_{d+1}\ne 0$.
Set 
$$
P_d:=\{(A,B,C;X,Y,Z)\in P \mid \rk B=n-d, \rk A\le n-d,\rk C\le n-d\}.
$$
Our goal is to show that $P_d\subset \overline{U}$ for each $0<d\le n$.

Let $A'$ be the $(n-d){\times}(n-d)$ right upper corner of $A$ and 
$X':=(x_{d+1},\ldots,x_n)^t$. Define $B',C',Y'$, and $Z'$ in the same way. Then 
$P_d$ is defined by: 
$$
\begin{array}{l}
 b_1=\ldots=b_d=a_1=\ldots=a_d=c_1=\ldots c_d=0, \ \ \ b_{d+1}\ne 0; \\
  Y'=(B')^{-1}A'X', \  \text{ and } \ Z'=(B')^{-1}C'X' . \\
 \end{array} 
$$
Clearly $P_d$ is an irreducible affine variety and it contains an irreducible open subset 
$(P_d)^{\circ}$ where $x_n y_n z_n\ne 0$. It suffices to prove that 
 $(P_d)^{\circ}\subset\overline{U}$. Therefore assume that $x_n y_n z_n\ne 0$.
 We would like to replace  $A$ by $A+{\mathcal E}_A$, where 
 ${\mathcal E}_A\in GL(W)_{\hat e}$ is non-degenerate and 
``small'',  and do the same with 
$B$ and $C$. 
Since $x_n y_n z_n\ne 0$, the vectors 
$X$, $Y$, and $Z$ lie in the single 
open orbit of $GL(W)_{\hat e}$. In particular 
$Y={\mathcal E}_A Y$ and $Z={\mathcal E}_C X$ for some 
${\mathcal E}_A,{\mathcal E}_C\in GL(W)_{\hat e}$. 
Let $E\in GL(W)$ be the  identity matrix.
Then 
$(A+\lambda{\mathcal E}_A,B+\lambda E, C+\lambda{\mathcal E}_C;X,Y,Z)\in U$ 
for all $\lambda\in{\mathbb F}^{^\times}$. 
Taking limit with $\lambda$ tending to zero, we conclude that 
$(P_d)^{\circ}\subset\overline{U}$ and $P$
is irreducible.
\end{proof}

\begin{qn} Is it true that 
in the case of two Jordan blocks the defining ideal 
of $\gt C^*(\gt g_e)$ is generated by Equations~(\ref{2reg})?
Here the singularities of $\gt C^*(\gt g_e)$ form a 
subset of codimension $3$ (defined by the equation $a_1=b_1=c_1=0$).  
Maybe this can help to solve the problem. 
\end{qn}

\begin{rmk} Let $x=x_s+x_n$  be the Jordan decomposition of 
$x\in\gt{gl}_n$. 
Then $(\gt{gl}_n)_x$ is a sum of 
centralisers  $(\gt{gl}_{n_i})_{e_i}$, where all $e_i$ are nilpotent.
Suppose that each $e_i$ has at most two Jordan block. 
In that case $x$ is said to be {\it two-regular}, see \cite{NS}.
The  mixed commuting variety associated with 
$\gt g_x$ is a product of mixed commuting 
varieties associated with $(\gt{gl}_{n_i})_{e_i}$. Hence it is irreducible.
\end{rmk}

\section{Poisson structures on the dual space of a centraliser}
\label{str}

From now on, we assume that $\mathbb F$ is algebraically closed and 
of characteristic zero.

By the Jacobson-Morozov theorem, $e$ can be included into an 
$\gt{sl}_2$-triple $(e,h,f)$ in $\g$.  By means of the  Killing form on $\g$, 
we identify $\gt g$ and $\g^*$. 
Consider $e$ as an element of $\gt g^*$ and 
let ${\mathbb S}_e$ denote the {\it Slodowy slice} $e+\g_f$ at $e$
to the coadjoint orbit $Ge$. 
The Slodowy slice ${\mathbb S}_e$ 
is a transversal slice to coadjoint $G$-orbits (symplectic leaves) 
in a sense of \cite{alan} and 
therefore  carries a {\it transversal} Poisson 
structure obtained from $\gt g^*$ by the Weinstein reduction, 
see e.g. \cite{cush-rob} or \cite{gg}. 
This Poisson structure, which is in general non linear,  
turns out to be polynomial \cite{cush-rob}.
For each element $F\in{\mathcal S}(\gt g)^{\gt g}$ its restriction 
$F\vert_{{\mathbb S}_e}$ lies in the centre 
${\eus Z}\mathbb F[{\mathbb S}_e]$ of the Poisson algebra 
$\mathbb F[{\mathbb S}_e]$. Moreover 
${\eus Z}\mathbb F[{\mathbb S}_e]$ is a polynomial algebra in $\rk\gt g$ variables 
generated by the restrictions ${F_i}\vert_{{\mathbb S}_e}$ for each generating
system of invariants $\{F_1,\ldots, F_{\rk\gt g}\}\subset{\mathcal S}(\gt g)^{\gt g}$,
see e.g. \cite[Remark 2.1]{ppy}.

The $G$-equivariance of the Killing form implies that $\g_e=[e,\g]^\perp$. On the
other hand, $\g=[e,\g]\oplus\g_f$ by the $\gt{sl}_2$-theory. 
Thereby  ${\mathbb S}_e$ is naturally 
isomorphic to $\gt g_e^*$ and 
${\mathbb F}[{\mathbb S}_e]\cong {\mathbb F}[\g_f]\cong {\mathcal S}(\gt g_e)$.
Remarkably, the linear part of the transversal Poisson 
structure on ${\mathbb S}_e$
gives us the usual Lie-Poisson bracket on $\gt g_e^*$, see e.g. 
\cite{cush-rob}. This leads to a natural construction of 
symmetric $\gt g_e$-invariants.  

For a homogeneous  $F\in{\mathcal S}(\gt g)$, 
let $^{e\!}F$ be the component of minimal degree  of the 
restriction $F\vert_{\mathbb S_e}$. (The restriction is not necessary homogeneous.)
Identifying $\mathbb F[\mathbb S_e]$ and $\mathbb F[\gt g_e^*]$, we consider 
$^{e\!}F$ as an element of ${\mathcal S}(\gt g_e)$. 

\begin{lm}\cite[Proposition~0.1.]{ppy} 
Keep the above notation. Then $^{e\!}F\in{\mathcal S}(\gt g_e)^{\gt g_e}$
for each homogeneous $F\in{\mathcal S}(\gt g)^{\gt g}$.  
\end{lm}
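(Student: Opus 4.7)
The plan is to deduce the $\gt g_e$-invariance of ${}^{e\!}F$ from the much stronger statement that $F\vert_{\mathbb S_e}$ is Poisson-central with respect to the transversal Poisson bracket on $\mathbb S_e$, by passing to the lowest-degree component.

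More precisely, by Weinstein reduction (cited in the excerpt), the restriction $F\vert_{\mathbb S_e}$ lies in ${\eus Z}\mathbb F[\mathbb S_e]$, i.e.\ $\{F\vert_{\mathbb S_e},g\}_{\mathrm{tr}}=0$ for all $g\in\mathbb F[\mathbb S_e]$. Under the identification $\mathbb F[\mathbb S_e]\cong\mathcal S(\gt g_e)$, the transversal bracket is polynomial (Cushman–Roberts), and it decomposes by total polynomial degree as
$$
\{\,,\,\}_{\mathrm{tr}}=\{\,,\,\}_{\mathrm{LP}}+\{\,,\,\}_{(0)}+\{\,,\,\}_{(1)}+\cdots,
$$
where a summand $\{\,,\,\}_{(m)}$ raises the polynomial degree by $m$ (so $\{\,,\,\}_{\mathrm{LP}}$, being the linear part, corresponds to $m=-1$). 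The key input from the excerpt is that this linear part is exactly the Lie–Poisson bracket on $\gt g_e^*$.

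The main step is then a weight comparison. Write $F\vert_{\mathbb S_e}={}^{e\!}F+R$, where $\deg{}^{e\!}F=d$ and $R$ has only components of polynomial degree $>d$. For any linear function $x\in\gt g_e\subset\mathcal S(\gt g_e)$, the identity $\{F\vert_{\mathbb S_e},x\}_{\mathrm{tr}}=0$ decomposes into homogeneous components; the unique contribution of minimal polynomial degree $d$ is $\{{}^{e\!}F,x\}_{\mathrm{LP}}$, since every other summand has strictly larger degree (either because it involves a higher-degree piece of $F\vert_{\mathbb S_e}$, or because it uses $\{\,,\,\}_{(m)}$ with $m\ge 0$). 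Hence $\{{}^{e\!}F,x\}_{\mathrm{LP}}=0$ for every $x\in\gt g_e$, which is precisely the assertion ${}^{e\!}F\in\mathcal S(\gt g_e)^{\gt g_e}$.

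The main obstacle is not the logical structure of the argument, which is a standard weight/lowest-term extraction, but the two facts that have to be taken for granted here: the polynomiality of the transversal Poisson structure on $\mathbb S_e$ and the identification of its lowest-degree component with the Lie–Poisson bracket on $\gt g_e^*$. Both are cited from \cite{cush-rob} (and \cite{gg}), so in the present exposition the proof reduces to the clean degree-counting argument above.
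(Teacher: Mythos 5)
Your proof is correct, and it is the natural argument given what the paper has just quoted: the Poisson-centrality of $F\vert_{\mathbb S_e}$ for the transversal bracket, the polynomiality of that bracket, and the identification of its linear part with the Lie--Poisson bracket of $\gt g_e$. The paper itself offers no proof here, only the citation \cite[Proposition~0.1]{ppy}, so there is nothing local to compare against. One point worth making explicit in your write-up: the degree expansion $\{\,,\,\}_{\mathrm{tr}}=\{\,,\,\}_{\mathrm{LP}}+\{\,,\,\}_{(0)}+\{\,,\,\}_{(1)}+\cdots$ you use tacitly assumes that there is no degree-$(-2)$ (constant) summand. That is true because the transversal Poisson bivector vanishes at $e$ -- the symplectic leaf through $e$ meets $\mathbb S_e$ only in $e$ itself -- and it is exactly this vanishing that makes $\{{}^{e\!}F,x\}_{\mathrm{LP}}$ the \emph{unique} term of minimal degree in $0=\{F\vert_{\mathbb S_e},x\}_{\mathrm{tr}}$, hence forces it to vanish. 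Stating that explicitly closes the only small gap in an otherwise clean weight-extraction argument.
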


In types $A$ and $C$ it is possible to choose generating sets 
$\{F_1,\ldots,F_{\rk\gt g}\}\subset{\mathcal S}(\gt g_e)^{\gt g_e}$ such that 
the $^{e\!}F_i$'s are algebraically independent and generate the whole algebra of 
symmetric $\gt g_e$-invariants, see 
\cite[Theorems~4.2 and 4.4]{ppy}. The success is partially due to the fact 
that  in those two cases 
$\codim(\gt g_e^*)_{\rm sing}\ge 2$. In all other simple Lie algebras 
there are nilpotent elements, for which the codimension is $1$. 
Here we show that in type $A$ the codimension of 
$(\gt g_e^*)_{\rm sing}$ in $\gt g_e^*$ is 
greater than or equal to  $3$.

Suppose that  $\gt g=\gt{gl}(\VV)$.
Then there are certain points 
$\alpha:=\sum_{i=1}^{k} a_i (\xi_i^{i,d_i})^*$,
with $a_i\in\mathbb F^{^\times}$ being pairwise distinct,   and  
$\beta:=\sum_{i=1}^{k-1}(\xi_{i+1}^{i,d_i})^*$ in 
$\gt g_e^*$ such that 
$(\ff\alpha\oplus\ff\beta)\cap(\gt g_e^*)_{\rm sing}=\{0\}$,
see \cite[Section 3]{ppy}.  

To prove that the codimension of $(\gt g_e^*)_{\rm sing}$ is greater than $2$, 
we need to find the third, linear independent with 
$\alpha$ and $\beta$,  regular point. 
The following is a slight modification of 
\cite[Proposition 3.2]{ppy}. 

\begin{lm} \label{gamma}
Suppose that $\gt g$ is of type $A$. Take 
$\gamma:=\sum\limits_{i=1}^{k-1} (\xi_i^{i+1,d_i+1})^*$.
Then $\gamma\in(\g_e^*)_{\rm reg}$.
\end{lm}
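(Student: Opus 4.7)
The plan is to compute the stabiliser $(\gt g_e)_\gamma$ explicitly and verify that its dimension equals $n$, which by \cite{fan} is the index of $\gt g_e$ in type $A$; this will certify $\gamma\in(\gt g_e^*)_{\rm reg}$.

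First I would apply the commutator relation~(\ref{commutator}) to an arbitrary $\xi\in\gt g_e$. For each basis vector $\xi_i^{j,s}$ the calculation yields
\begin{equation*}
\gamma([\xi_i^{j,s},\xi])\;=\;c_{j-1}^{i,\,d_j-s}(\xi)\;-\;c_j^{i+1,\,d_{i+1}-s}(\xi),
\end{equation*}
under the convention that $c_a^{b,t}(\xi)=0$ whenever $(a,b,t)$ lies outside the admissible range $\max\{d_b-d_a,0\}\le t\le d_b$. The condition $\xi\in(\gt g_e)_\gamma$ is then the vanishing of all these expressions as $(i,j,s)$ runs through the basis.

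Next I would organise the resulting linear system by the diagonals $c=p-q$ of the $(p,q)$-grid indexing the coefficients $c_p^{q,t}$. The ``boundary'' equations, coming from $j=1$ or $i=k$ (where one of the two terms automatically vanishes), yield $c_1^{b,t}(\xi)=0$ for all $b\ge 2$ and $c_a^{k,t}(\xi)=0$ for all $a\le k-1$. The remaining ``interior'' equations take the form
\begin{equation*}
c_p^{q,t}(\xi)\;=\;c_{p+1}^{q+1,\,t+d_{q+1}-d_{p+1}}(\xi)
\end{equation*}
and identify coefficients along each diagonal.

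Every diagonal with $c<0$ begins at $p=1$ and ends at $q=k$, so both endpoints are zeroed by the boundary equations; link propagation (including instances whose source is out of range and thus forces the target to vanish) then annihilates the whole chain. On each diagonal with $c\ge 0$ the chain starts at the ``safe'' cell $(1+c,1)$, whose $d_{c+1}+1$ admissible $t$-values cannot be killed by any boundary or invalid-source relation; a bookkeeping argument along the chain shows that every further coefficient on that diagonal is either identified with one of these via a valid link or zeroed by an invalid-source link, so exactly $d_{c+1}+1$ free parameters survive. Summing,
\begin{equation*}
\dim(\gt g_e)_\gamma\;=\;\sum_{c=0}^{k-1}(d_{c+1}+1)\;=\;\sum_{i=1}^{k}(d_i+1)\;=\;n,
\end{equation*}
so $\gamma\in(\gt g_e^*)_{\rm reg}$.

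The main obstacle is this final combinatorial count: one must verify carefully that on each diagonal $c\ge 0$ the interaction between link identifications and invalid-source zeroings produces \emph{exactly} $d_{c+1}+1$ independent parameters --- neither more (no extra unconstrained coefficient hides further along the chain) nor fewer (the ``safe'' source variables at $(1+c,1)$ are not secretly constrained by downstream relations). A short induction along each chain, tracking which $t$-values admit a valid link step to the next cell, will make the count rigorous.
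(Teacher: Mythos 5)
Your proposal follows the paper's proof in all essentials: the same formula $\gamma([\xi_i^{j,s},\xi])=c_{j-1}^{i,d_j-s}(\xi)-c_j^{i+1,d_{i+1}-s}(\xi)$, the same organisation of the linear system along the diagonals $p-q$ (the paper packages the vanishing of the sub-diagonal part as a maximal-$j$ contradiction argument and then identifies the remaining coefficients with those of the head cells $c_{1+\ell}^{1,t}$), and the same count $\sum_{\ell}(d_{\ell+1}+1)=n$. Two adjustments are needed to make your sketch close. First, the half of your ``main obstacle'' that worries about \emph{no fewer} than $d_{c+1}+1$ surviving parameters is unnecessary: since $\dim(\gt g_e)_\alpha\ge\ind\gt g_e=\rk\gt g=n$ for every $\alpha\in\gt g_e^*$ (Vinberg's inequality together with $\ind\gt g_e=\rk\gt g$, \cite{fan}), it is enough to prove the upper bound $\dim(\gt g_e)_\gamma\le n$, i.e.\ that every coefficient is either forced to vanish or identified (possibly after the $t$-shift you record) with one of the admissible head-cell coefficients; this is exactly what the paper establishes, and the delicate ``the head variables are not secretly constrained'' check can simply be dropped.

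Second, your boundary claim at the $p=1$ end is overstated: the equations with $j=1$ kill $c_1^{m,t}(\xi)$ only for $t\le d_m-d_1+d_{m-1}$, which is a proper subrange of $[0,d_m]$ whenever $d_{m-1}<d_1$, so a negative diagonal is not annihilated outright at that end. What does work is the other end together with backward propagation: for $q=k$ the equations kill $c_a^{k,t}(\xi)$ for all $a\le k-1$ and \emph{all} admissible $t$, and for a cell $(p,q)$ with $p<q$ every admissible $t$ occurs as the left-hand side of a link (because $d_{p+1}\ge d_q$), whose right-hand side lies at $(p+1,q+1)$ on the same diagonal or is out of range; backward induction from $q=k$ then annihilates the whole negative diagonal. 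This is precisely the content of the paper's choice of the maximal $j$ with $c_j^{i,t}(\xi)\ne0$, $i>j$. With these two repairs your argument coincides with the published proof.
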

\begin{proof}
From (\ref{commutator}) and the definition of $\gamma$ it follows
that $\gamma([\xi_i^{j,s},\xi])=c_{j-1}^{i,d_j-s}(\xi)
-c_j^{i+1,d_{i+1}-s}(\xi)$ for all $\xi\in\g_e$. 
Suppose that ${\rm ad}^*(\xi)\gamma=0$. Then $\gamma([\xi,\g_e])=0$ forcing
$c_{j-1}^{i,d_j-s}(\xi)=c_j^{i+1,d_{i+1}-s}(\xi)$ for all
$i,j\in\{1,\ldots,k\}$ and all $s$ such that 
$\max(0,d_j-d_i)\le s\le d_j$.

We claim that $c_j^{i,s}(\xi)=0$ for $i>j$. Suppose for a
contradiction that this is not the case and take 
the maximal $j$ for
which there are $i>j$ and $0\le t\le d_i$ such that
$c_j^{i,t}(\xi)\ne 0$. Recall that, according to our convention,
$d_i\le d_j$. Moreover, $d_i\le d_{j+1}$, since $i\ge j+1$.
Set $s:=d_{j+1}-t$. Then $d_{j+1}-d_i\le s\le d_{j+1}$ and
$c_{j}^{i,d_{j+1}-s}(\xi)=c_{j+1}^{i+1,d_{i+1}-s}(\xi)$. As $j+1>j$ and
$i+1<j+1$, the right hand side of the equality is zero, forcing 
$c_{j}^{i,d_{j+1}-s}(\xi)=c_j^{i,t}(\xi)$ to be zero.

Now take $\xi_{i-1}^{i,s}\in\g_e$ with $0\le s\le d_{i}$. 
Since $\gamma([\xi,\xi_{i-1}^{i,s}])=0$, we have
$c_{i}^{i,d_{i}-s}(\xi)=c_{i-1}^{i-1,d_i-s}(\xi)$. Therefore,
$c_i^{i,t}(\xi)=c_{i-1}^{i-1,t}(\xi)= c_1^{1,t}(\xi)$ for 
$0\le t\le d_{i}$. In the same way one can show that
$c_{i+\ell}^{i,t}(\xi)=c_{i+\ell-1}^{i-1,t}(\xi)=
c_{1+\ell}^{1,t}(\xi)$ for $d_{i}-d_{i+\ell}\le t\le d_{i}$. 
Hence
$\xi$ is determined by a pair $(\ell,t)$, where 
$0\le\ell<k$ and $d_1-d_{\ell+1}\le t\le d_1$, and a scalar 
$c_{1+\ell}^{1,t}(\xi)$. Thus 
$\dim(\g_e)_\gamma\le\dim\VV$ and
$\gamma\in(\gt g_e^*)_{\rm reg}$. 
\end{proof}

\begin{cl} The stabiliser $(\gt g_e)_\gamma$ 
has a basis $\eta_{i,s}$ with $1\le i\le k$ and 
$d_1-d_i\le s\le d_1$, where 
$\eta_{i,s}=\xi_i^{1,s}+\xi_{i+1}^{2,s}+\ldots+\xi_{k}^{k-i+1,s}$.
\end{cl}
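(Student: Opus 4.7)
The plan is to verify directly that each $\eta_{i,s}$ lies in $(\gt g_e)_\gamma$, then conclude via a dimension count against Lemma~\ref{gamma}. The central tool is the identity established in the proof of Lemma~\ref{gamma}, namely
$$
\gamma([\xi_a^{b,t},\xi])=c_{b-1}^{a,d_b-t}(\xi)-c_b^{a+1,d_{a+1}-t}(\xi)
$$
for all $\xi\in\gt g_e$, with the convention that any coefficient $c_p^{q,r}$ indexed outside the valid range $\max(d_q-d_p,0)\le r\le d_q$ is zero (and likewise whenever $p=0$ or $q=k+1$).

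Applying this to each summand $\xi^{m,s}_{m+i-1}$ of $\eta_{i,s}=\sum_{m=1}^{k-i+1}\xi_{i+m-1}^{m,s}$ (setting $a=i+m-1$, $b=m$, $t=s$), I would sum to obtain
$$
\gamma([\eta_{i,s},\xi])=\sum_{m=1}^{k-i+1}\Bigl[c_{m-1}^{i+m-1,\,d_m-s}(\xi)-c_m^{i+m,\,d_{i+m}-s}(\xi)\Bigr].
$$
The aim is to recognise this as a telescoping sum: the $m$-th negative term pairs with the $(m+1)$-th positive term, and the boundary terms at $m=1$ (where $b-1=0$) and $m=k-i+1$ (where $a+1=k+1$) disappear by the out-of-range convention. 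Any interior residual that would survive due to $d_{m+1}$ differing from $d_{i+m}$ has to be shown to be a coefficient at a power outside the valid range $[\max(d_{i+m}-d_m,0),\,d_{i+m}]$, and hence automatically zero.

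Linear independence is then transparent: $\eta_{i,s}$ contains the distinguished summand $\xi_i^{1,s}$ with upper index exactly $1$ and lower index exactly $i$, and no other $\eta_{i',s'}$ with $(i',s')\ne(i,s)$ contributes to this coordinate. Finally, the total number of pairs $(i,s)$ equals $\sum_{i=1}^k(d_i+1)=n=\dim\mathbb V$, while Lemma~\ref{gamma} shows $\dim(\gt g_e)_\gamma\le\dim\mathbb V$ with equality since $\gamma$ is regular. Therefore the linearly independent family $\{\eta_{i,s}\}$ of the right cardinality must be a basis.

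The main obstacle is the first step: making the telescoping water-tight when the Jordan blocks are of unequal size. One needs a careful bookkeeping argument showing that at every interior index $m$ the two potential mismatching coefficients $c_m^{i+m,\,d_{m+1}-s}$ and $c_m^{i+m,\,d_{i+m}-s}$ either coincide (when $d_{m+1}=\cdots=d_{i+m}$) or have their difference forced to zero by the range constraints imposed by the admissible $s$.
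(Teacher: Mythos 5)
Your overall architecture is reasonable: if each $\eta_{i,s}$ did lie in $(\gt g_e)_\gamma$, then linear independence (via the leading term $\xi_i^{1,s}$), the count $\sum_{i=1}^k(d_i+1)=\dim\VV$, and the bound $\dim(\gt g_e)_\gamma\le\dim\VV$ from Lemma~\ref{gamma} would finish the argument, and those last three points are fine. The gap is your first step, and it is not removable bookkeeping: the interior residuals really survive, because the relation imposed by $\gamma$ matches coefficients along a diagonal at \emph{shifted} exponents. Applying the displayed identity to the basis vector $\xi_{i-1}^{i+\ell,\,d_i-t}$ gives $c_{i+\ell}^{i,\,t}(\xi)=c_{i+\ell-1}^{i-1,\,t-(d_i-d_{i+\ell})}(\xi)$, so the two coefficients that must agree differ in exponent by $d_i-d_{i+\ell}$, not sit at one common $s$. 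Concretely, take the partition $(3,3,1)$, so $d_1=d_2=2$, $d_3=0$, and $\gamma=(\xi_1^{2,2})^*+(\xi_2^{3,0})^*$. For $i=2$, $s=2$ your telescoped sum collapses to $\gamma([\eta_{2,2},\xi])=c_1^{3,0}(\xi)$, a nonzero functional: indeed $[\xi_2^{1,2}+\xi_3^{2,2},\,\xi_1^{3,0}]=\xi_1^{2,2}$, on which $\gamma$ takes the value $1$, so $\eta_{2,2}\notin(\gt g_e)_\gamma$; the case $(i,s)=(2,0)$ fails similarly. The actual stabiliser elements on that diagonal are $\xi_2^{1,0}+\xi_3^{2,2}$, $\xi_2^{1,1}$ and $\xi_2^{1,2}$: the exponent jumps by $d_m-d_{i+m-1}$ in passing from one summand to the next. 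So the membership you set out to verify is false for the $\eta_{i,s}$ exactly as written, and your telescoping can only be made exact after the lower summands are redefined with the shifted exponents (terms being discarded once the chain of relations breaks off); with that correction your plan does go through.

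Note also that this is not the route intended by the paper: the corollary carries no separate proof and is meant to be read off the proof of Lemma~\ref{gamma}, which establishes the \emph{opposite} inclusion --- every $\xi\in(\gt g_e)_\gamma$ has vanishing coefficients $c_j^{i,s}(\xi)$ with $i>j$ and all remaining coefficients determined by the scalars $c_i^{1,s}(\xi)$ --- and then the equality $\dim(\gt g_e)_\gamma=\ind\gt g_e=\rk\gt g=\dim\VV$ converts this parametrisation into a basis indexed by your pairs $(i,s)$. The same exponent shift is suppressed there as well (the line $c_{i+\ell}^{i,t}=c_{i+\ell-1}^{i-1,t}$ is only correct when $d_i=d_{i+\ell}$), which is presumably why the corollary is printed with a single exponent $s$; the indexing and the leading terms $\xi_i^{1,s}$ are right, but the trailing summands must carry the shifted exponents. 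So your instinct that the unequal-block bookkeeping is the crux was correct, but it is a genuine obstruction rather than a technicality, and any complete proof --- yours or the paper's --- has to build the shift into the definition of $\eta_{i,s}$.
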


\noindent
For a nilpotent element with three Jordan blocks,
points $\alpha$, $\beta$, and $\gamma$ are shown on 
Picture~\ref{pikcha_alpha}. Here $\beta$ and 
$\gamma$ are sums of two matrix elements 
with coefficients $1$, and $\alpha$ is the sum 
with coefficients $a_1,a_2,a_3$. All of them are considered 
as elements of $\gt g_f$. On the same picture we remind 
Arnold's description of a generic element of $\gt g_e$ 
(see also Picture~2 in Section~1).

\begin{figure}[htb]
\setlength{\unitlength}{0.020in}
\begin{center}
\begin{picture}(93,90)(-3,0)

\qbezier(0,0),(-3,45),(0,90)
\qbezier(90,0),(93,45),(90,90)

\put(0,20){\line(1,0){90}}
\put(0,50){\line(1,0){90}}
\put(70,0){\line(0,1){90}}
\put(40,0){\line(0,1){90}}

\put(1,89){\line(1,-1){38}}
\put(5,87){$e$}
\put(8.4,86.6){\line(1,-1){11.5}}
\put(20,72){$e$}
\put(23.4,71.6){\line(1,-1){12.2}}
\put(36,56){$e$}
\put(11,90){\line(1,-1){27.5}}
\put(41,49){\line(1,-1){28}}
\put(71,19){\line(1,-1){18}}
\qbezier(15.5,90),(27.25,78.25)(39,66.5) 
\qbezier[10](20,88),(27.5,88),(35,88) 
\qbezier(35,90)(37,88)(39,86)
\put(0.4,52){$a_1$}

\put(45,46){$e$}
\put(48.4,46){\line(1,-1){17.5}}
\put(66,25.5){$e$}
\put(50,49){\line(1,-1){18}}
\put(54,49){\line(1,-1){15}}
\qbezier[6](58.5,45.5)(63.5,45.5)(68.5,45.5)
\put(41,22){$a_2$}

\put(75,16){$e$}
\put(78.4,16){\line(1,-1){10.5}}
\qbezier(79.5,19)(84.5,14)(89.5,9)
\qbezier[2](83.5,17)(84.5,17)(85.5,17)
\qbezier(85.5,19)(87.5,17)(89.5,15)
\put(71,1.5){$a_3$}

\put(41,89){\line(1,-1){28}}
\put(45,89){\line(1,-1){24}}
\put(49,89){\line(1,-1){20}}
\put(53,89){\line(1,-1){16}}
\qbezier[6](56,88)(61,88)(66,88)
\qbezier(67,89)(68,88)(69,87)
\put(40.5,51){$\gamma$}

\put(71,89){\line(1,-1){19}}
\put(75,89){\line(1,-1){15}}
\put(79,89){\line(1,-1){11}}
\qbezier[6](82.25,87)(86.125,87)(90,87)

\put(71,49){\line(1,-1){19}}
\put(75,49){\line(1,-1){15}}
\put(79,49){\line(1,-1){11}}
\qbezier[6](82.25,47)(85.75,47)(89.25,47)
\put(70.5,21){$\gamma$}

\put(39,21){\line(-1,1){28}}
\put(39,25){\line(-1,1){24}}
\put(39,29){\line(-1,1){20}}
\qbezier[8](22,47)(29,47)(36,47)
\qbezier(35,49)(37,47)(39,45)
\put(0,21){$\beta$}

\put(39,1){\line(-1,1){18}}
\put(39,5){\line(-1,1){14}}
\put(39,9){\line(-1,1){10}}
\qbezier[4](32,18)(35.5,18)(39,18)

\put(69,1){\line(-1,1){18}}
\put(69,5){\line(-1,1){14}}
\put(69,9){\line(-1,1){10}}
\qbezier[4](62,18)(65.5,18)(69,18)
\put(41,1){$\beta$}

\end{picture}
\end{center}
\caption{}\label{pikcha_alpha}
\end{figure}

\begin{thm}\label{c3A} 
If $\gt g=\gt{gl}(\VV)$ with $\dim\VV\ge 3$,  
then $\codim(\gt g_e^*)_{\rm sing}\ge 3$.
\end{thm}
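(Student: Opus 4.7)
My plan is to produce a three-dimensional subspace $L\subset\gt g_e^*$ with $L\setminus\{0\}\subset(\gt g_e^*)_{\rm reg}$ and to deduce the codimension bound from the fact that $(\gt g_e^*)_{\rm sing}$ is a Zariski-closed cone. The candidate is $L:=\ff\alpha\oplus\ff\beta\oplus\ff\gamma$, with $\alpha=\sum_i a_i(\xi_i^{i,d_i})^*$ and $\beta=\sum_i(\xi_{i+1}^{i,d_i})^*$ the two points already recalled from \cite[Section 3]{ppy} and $\gamma$ the third regular point of Lemma~\ref{gamma}. These three vectors have pairwise disjoint supports in the dual basis $\{(\xi_i^{j,s})^*\}$ (diagonal blocks, subdiagonal, super-diagonal, respectively), so $L$ is three-dimensional.

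The key step is to show that every nonzero $\delta=\lambda\alpha+\mu\beta+\nu\gamma\in L$ is regular. The case $\nu=0$ is precisely the content of \cite[Section 3]{ppy}, so assume $\nu\ne 0$ and normalise $\nu=1$. Expanding $\delta([\xi,\xi_i^{j,s}])=0$ via~(\ref{commutator}) yields, for each triple $(i,j,s)$, a linear relation on the coefficients $c_i^{j,s}(\xi)$ of $\xi\in(\gt g_e)_\delta$: namely a $\lambda$-contribution from $\alpha$ of shape $a_j c_j^{i,d_j-s}(\xi)-a_i c_j^{i,d_i-s}(\xi)$, an analogous $\mu$-contribution from $\beta$, and the $\gamma$-contribution $c_{j-1}^{i,d_j-s}(\xi)-c_j^{i+1,d_{i+1}-s}(\xi)$ already analysed in the proof of Lemma~\ref{gamma}. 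Re-running that induction, processing the indices in the same order and using the $\alpha$- and $\beta$-terms to absorb marginal cases where the $\gamma$-equation alone is insufficient, should force $\dim(\gt g_e)_\delta\le\dim\VV$; Vinberg's inequality then gives equality, so $\delta\in(\gt g_e^*)_{\rm reg}$.

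For the final step, $(\gt g_e^*)_{\rm sing}$ is closed (being the locus where the Kirillov form on $\gt g_e$ drops rank) and is a cone, since $(\gt g_e)_{t\delta}=(\gt g_e)_\delta$ for every $t\in\ff^{^\times}$. By the previous step, the projective plane $\mathbb P(L)\subset\mathbb P(\gt g_e^*)$ is disjoint from the closed subvariety $\mathbb P((\gt g_e^*)_{\rm sing})$, and the projective dimension theorem (applied to each irreducible component of the latter) gives $\dim\mathbb P((\gt g_e^*)_{\rm sing})+2<\dim\mathbb P(\gt g_e^*)$, i.e.\ $\codim(\gt g_e^*)_{\rm sing}\ge 3$. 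The main obstacle in this programme is the stabiliser calculation in the case $\nu\ne 0$: one has to superimpose three sets of linear constraints without overdetermining the system, and to handle the branching $d_i=d_j$ versus $d_i\ne d_j$ that modifies the shape of the $\alpha$- and $\beta$-equations. The hypothesis $\dim\VV\ge 3$ enters precisely in ensuring that $\gamma$ is nonzero (in particular $k\ge 2$) and contributes information genuinely independent of $\alpha$ and $\beta$.
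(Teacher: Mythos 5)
Your plan — exhibit a three-dimensional subspace $L=\ff\alpha\oplus\ff\beta\oplus\ff\gamma$ meeting $(\gt g_e^*)_{\rm sing}$ only at the origin, then use that the singular set is a Zariski-closed cone — is exactly the plan of the paper, and the choice of $\alpha,\beta,\gamma$ is also the same. The final projective step is a correct (if slightly elaborate) way to pass from $L\cap(\gt g_e^*)_{\rm sing}=\{0\}$ to $\codim(\gt g_e^*)_{\rm sing}\ge 3$.

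However, there is a genuine gap in the middle. The heart of the matter is to show that every $\delta=\gamma+x\alpha+y\beta$ with $\gamma$-component present is regular, and you only assert that "re-running" the induction of Lemma~\ref{gamma} and "absorbing marginal cases" with the $\alpha$- and $\beta$-equations "should" close the argument; you yourself flag this as the main obstacle. As written, this is not a proof: once $\alpha$ and $\beta$ enter, the recursion that killed the off-diagonal coefficients in Lemma~\ref{gamma} no longer closes termwise — the $\alpha$-term couples $c_j^{i,d_j-s}$ to $c_j^{i,d_i-s}$ at a different $s$-shift, and the $\beta$-term shifts indices the other way, so one has to redo the bookkeeping from scratch (and the cases $d_i=d_j$ vs.\ $d_i\ne d_j$ genuinely branch, as you note). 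The paper avoids this entirely with a degeneration: the $\ff^\times$-action $\rho$ of~(\ref{ro}) acts on the dual basis by $\rho(t)(\xi_i^{j,s})^*=t^{i-j+1}(\xi_i^{j,s})^*$, hence $\rho(t)(\gamma+x\alpha+y\beta)=\gamma+tx\,\alpha+t^2y\,\beta$. Since $\rho(t)$ is a scaled coadjoint transformation, it preserves stabiliser dimensions, so $(\gt g_e^*)_{\rm sing}$ is $\rho$-stable and closed; as $t\to 0$ the whole $\rho$-orbit of $\delta$ degenerates to $\gamma$, which is regular by Lemma~\ref{gamma}, and therefore every $\rho(t)\delta$ — in particular $\delta$ itself — is regular. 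This single observation replaces the superimposed linear algebra you were attempting. Finally, note a small omission: when $e$ is regular ($k=1$), the sums defining $\beta$ and $\gamma$ are empty and $L$ collapses; this case must be handled separately, which is easy because then $(\gt g_e^*)_{\rm sing}=\{0\}$ and $\codim=\dim\VV\ge 3$.
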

\begin{proof}
If $e$ is a regular element, then 
$(\gt g_e^*)_{\rm sing}=\{0\}$ and 
the codimension of this subset is equal to 
$\dim\gt g_e=\dim\VV$. Suppose that $e$ is not regular and 
let elements $\alpha=\sum_{i=1}^{k} a_i(\xi_i^{i,d_i})^*$, 
$\beta=\sum_{i=1}^{k-1}(\xi_{i+1}^{i,d_i})^*$, and
$\gamma=\sum_{i=1}^{k-1}(\xi_{i}^{i+1,d_{i+1}})^*$ be as above.
We claim that 
$(\ff\alpha{\oplus}\ff\beta{\oplus}\ff\gamma)\cap(\gt g_e^*)_{\rm sing}=\{0\}$.
Indeed each non-zero point $x\alpha+y\beta$ is regular by 
\cite[Proposition 3.3]{ppy}. In order  to prove that 
$\gamma+x\alpha+y\beta$ is regular for all $x,y\in\ff$, we
use the action $\rho$ of $\ff^*$ defined by Formula~(\ref{ro}). 
Direct calculation shows that $\rho(t)(\gamma+x\alpha+y\beta)=
\gamma+xt\alpha+yt^2\beta$. Since 
$\gamma=\lim_{t\to 0}\rho(t)(\gamma+x\alpha+y\beta)$ and it is 
regular by Lemma~\ref{gamma}, all points $\rho(t)(\gamma+x\alpha+y\beta)$,
including $\gamma+x\alpha+y\beta$, are regular.

The  result follows, since  the subset  $(\gt g_e^*)_{\rm sing}$ 
is conical and Zariski closed.  
\end{proof}

 Let us say that a subalgebra
${\mathcal A}$ is {\it Poisson-commutative} 
if $\{{\mathcal A},{\mathcal A}\}=0$. 
Our main interest in the ``codim 3'' property is 
motivated by some application related to Poisson-commutative subalgebras 
of ${\mathcal S}(\gt g_e)$.

\begin{df}(Panyushev) A Lie algebra $\gt q$ is said to be 
{\it $n$-wonderful} if 
\begin{itemize}
\item[\sf (i)] \ 
${\mathcal S}(\gt q)^{\gt q}=\mathbb F[H_1,\ldots, H_{\ind\gt q}]$ 
is a polynomial algebra in $\ind\gt q$ variables; 
\item[\sf (ii)] \ 
all $H_i$ are homogeneous and 
 $\sum\limits_{i=1}^{\ind\gt q} \deg H_i=\displaystyle\frac{\dim\gt q+\ind\gt q}{2}$;
\item[\sf (iii)] \ $\codim (\gt q^*_{\rm sing})\ge n$.
\end{itemize}
\end{df}

The centralisers in types $A$ and $C$ are $2$-wonderful by \cite{ppy}. 
Now we know that in type $A$ they are $3$-wonderful. 

For $a\in\gt q^*$ let $\partial_a$ be a linear 
differential operator (partial derivative) on 
${\mathcal S}(\gt q)$ such that 
$\partial_a\xi= a(\xi)$ on $\xi\in\gt q$.

\begin{thm}\cite{codim3}\label{max}
Suppose that $\gt q$ is $3$-wonderful and $a\in\gt q^*_{\rm reg}$. 
Let ${\eus F}_a\subset{\mathcal S}(\gt q)$ 
be a subalgebra generated by the partial derivatives 
$\partial_a^m H_i$ ($m\ge 0$, $1\le i\le\ind\gt q$). 
Then ${\eus F}_a$ is a polynomial algebra in 
$(\dim\gt q+\ind\gt q)/2$ variables and it is maximal 
(with respect to inclusion) 
Poisson-commutative subalgebra of ${\mathcal S}(\gt q)$. 
\end{thm}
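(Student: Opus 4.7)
The plan is to establish three properties of $\eus F_a$ in succession: Poisson commutativity, polynomiality with the stated number of generators, and maximality among Poisson-commutative subalgebras of $\mathcal S(\gt q)$. Only the first two use (i) and (ii); the third is where (iii) becomes essential.

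The first property is the classical Mishchenko--Fomenko shift-of-argument trick. For any $F,G\in\mathcal S(\gt q)^{\gt q}$ and any $a\in\gt q^*$, the polynomials $\xi\mapsto F(\xi+ta)$ and $\xi\mapsto G(\xi+ta)$ are Casimirs of the Lie--Poisson bracket of $\gt q$ in the variable $\xi$, hence Poisson-commute. Expanding $\{F(\xi+ta),G(\xi+sa)\}=0$ as a polynomial in $s,t$ and equating coefficients yields $\{\partial_a^m F,\partial_a^n G\}=0$ for all $m,n\ge 0$. This shows $\eus F_a$ is Poisson-commutative.

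For polynomiality, note that $\partial_a^m H_i=0$ whenever $m\ge\deg H_i$, so the natural generating set has $\sum_i\deg H_i=(\dim\gt q+\ind\gt q)/2$ elements by hypothesis (ii). This is the maximum possible transcendence degree for a Poisson-commutative subalgebra of $\mathcal S(\gt q)$: the generic symplectic leaf has dimension $\dim\gt q-\ind\gt q$, a coisotropic subspace in it has dimension at most $(\dim\gt q-\ind\gt q)/2$, and one adds the $\ind\gt q$ Casimirs. To verify algebraic independence of the generators, I would apply the Jacobian criterion at a well-chosen point: by Taylor expansion the differentials $d_\xi(\partial_a^m H_i)$ are recovered from $d_{\xi+ta}H_i$ as $t$ varies, and under (i)--(ii) the family $\{d_{\xi+ta}H_i\}_{i,t}$ attains the maximal possible rank $(\dim\gt q+\ind\gt q)/2$ for $a\in\gt q^*_{\rm reg}$ and $\xi$ generic. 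The proof of this rank computation is essentially the completeness criterion of Bolsinov, which reduces to counting independent covectors along the line $\xi+\mathbb F a$ using the fact that $\{d_\gamma H_i\}$ spans $\ker\ad^*(\gamma)$ at each regular $\gamma$.

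The main obstacle is maximality, where (iii) is decisive. Suppose $F\in\mathcal S(\gt q)$ Poisson-commutes with $\eus F_a$; I wish to conclude $F\in\eus F_a$. Since $\eus F_a$ already attains the maximal transcendence degree for a Poisson-commutative subalgebra, $F$ must be algebraic over the fraction field of $\eus F_a$, and a clearing-denominators argument over $\gt q^*_{\rm reg}$ upgrades this to integrality of $F$ over $\eus F_a$. The remaining step is to show that $\eus F_a$ is integrally closed in $\mathcal S(\gt q)$. My plan is a Hartogs-type purity argument: let $\pi\colon\gt q^*\to\mathbb F^N$, with $N=(\dim\gt q+\ind\gt q)/2$, be the morphism whose components are the generators of $\eus F_a$. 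On the complement of a closed subset of codimension $\ge 3$, available thanks to (iii), the generic fibres of $\pi$ can be controlled (reduced, irreducible of the expected dimension) and $F$ can be written as a polynomial in the $\partial_a^m H_i$ along them. Condition (iii) is exactly what ensures that no obstruction to extending this presentation is supported on $\gt q^*_{\rm sing}$: the bad locus is too thin in codimension to carry a non-trivial reflexive extension, so the local expression of $F$ in the generators glues to a global one. This purity step is where the technical weight of the argument rests.
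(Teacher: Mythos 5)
A preliminary remark: the paper does not prove Theorem~\ref{max} at all --- it is imported from \cite{codim3} as a black box --- so your proposal can only be measured against the argument of that reference. Your first two steps do follow that argument. Poisson-commutativity is the standard shift-of-argument computation, and the count of nonconstant derivatives $\partial_a^mH_i$, $0\le m\le \deg H_i-1$, gives exactly $\sum_i\deg H_i=(\dim\gt q+\ind\gt q)/2$ generators (note a small slip: at $m=\deg H_i$ the derivative is a nonzero constant, not zero), while algebraic independence comes from a Bolsinov-type rank count along the lines $\xi+\ff a$, using that $d_\gamma H_1,\dots,d_\gamma H_{\ind\gt q}$ span $\gt q_\gamma$ for every $\gamma\in\gt q^*_{\rm reg}$ (a Kostant-type regularity property which itself follows from (i), (ii) and the codimension-$2$ condition, cf. \cite{ppy}).

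The genuine gap is in the maximality step, and it is twofold. First, clearing denominators does not upgrade algebraicity to integrality: from $F$ algebraic over ${\rm Quot}\,{\eus F}_a$ one only gets that $cF$ is integral over ${\eus F}_a$ for some nonzero $c\in{\eus F}_a$. What must actually be proved is that ${\eus F}_a$ is algebraically closed in ${\mathcal S}(\gt q)$, i.e.\ that any polynomial algebraic over ${\rm Quot}\,{\eus F}_a$ already lies in ${\eus F}_a$; your purity sketch assumes rather than proves the substance of this. Second, the role you assign to hypothesis (iii) is not the one it plays. In \cite{codim3} the third codimension is spent on a sweeping argument: by Bolsinov's criterion the differentials of ${\eus F}_a$ at $\xi$ span a space of the maximal dimension $(\dim\gt q+\ind\gt q)/2$ as soon as the whole line $\xi+\ff a$ lies in $\gt q^*_{\rm reg}$, and the set of $\xi$ where this fails is contained in $\bigcup_{t\in\ff}\bigl(\gt q^*_{\rm sing}-ta\bigr)$, which has codimension $\ge 2$ precisely because $\codim\gt q^*_{\rm sing}\ge 3$ (one codimension is lost to the sweep). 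This codimension-$2$ bound on the rank-drop locus of the generators is what feeds the standard Jacobian-locus criterion, giving both that $d_\xi F$ lies in the span of the $d_\xi(\partial_a^mH_i)$ on a big open set and that ${\eus F}_a$ is algebraically closed in ${\mathcal S}(\gt q)$, hence maximal. As you have set things up, a Hartogs/reflexivity argument would already be satisfied with a bad locus of codimension $\ge 2$, so your scheme cannot explain why the $2$-wonderful hypothesis is insufficient --- and it genuinely is insufficient, as the paper's Example~\ref{sp42} shows. The missing idea is exactly the loss of one codimension when $\gt q^*_{\rm sing}$ is swept by the lines $\xi+\ff a$.
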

 
Theorem~\ref{max} is applicable to the centralisers $\gt g_e$ in type $A$. 
Similar results concerning ${\eus F}_{\alpha}$ with 
$\alpha\in\gt g_e^*$ being slightly more general or the same 
as in Theorem~\ref{c3A} are recently obtained by A.\,Joseph. 

In type $C$ the picture is not so nice. There are nilpotent elements 
such that subalgebras ${\eus F}_a$ are never maximal. 

\begin{ex}\label{sp42} 
Let $e\in\cN(\gt{sp}_6)$ be defined by 
the partition $(4,2)$. (It was  considered in Example~\ref{sp42-com}.) 
Then $\dim[\gt g_e,\gt g_e]=2$, hence $\codim(\gt g_e^*)_{\rm sing}=2$. 
Let ${\eus F}_a$ be as in Theorem~\ref{max} with $a\in\gt g_e^*$. 
For this centraliser, ${\eus F}_a$  is never
maximal among Poisson-commutative subalgebras of ${\mathcal S}(\gt g_e)$. 
The general construction of \cite{ppy} allows us to write down the invariants.
They are $H_1=\xi_1^{1,1}+\xi_2^{2,1}$, 
$H_2=\xi_1^{1,3}$, and $H_3=4\xi_1^{1,3}e_2+\eta\eta$, 
with $\eta=\xi_1^{2,1}-\xi_1^{2,3}$. 
If $a$ is not regular, i.e., $a$ is zero on 
$[\gt g_e,\gt g_e]=\left< \xi_1^{1,3},\eta\right>_\ff$,
then $\partial_a H_3$ is proportional to  $\xi_1^{1,3}=H_2$ and
${\eus F}_a={\mathcal S}(\gt g_e)^{\gt g_e}$ is not maximal.

Assume that $a\in(\gt g_e^*)_{\rm reg}$. 
Then ${\eus F}_a$ is generated by four elements, the 
invariants $H_i$ and $x=\partial_a H_3$, which is an element of 
$(\gt g_e)_a$. 
According to Example~\ref{sp42-com}, $\gt g_e$ satisfies 
condition~ (\ref{strange}), hence $x$ is not regular, i.e.,
$\dim(\gt g_e)_x>3$. 
Clearly $(\gt g_e)_x$ commutes with ${\eus F}_a$, but is not contained 
in it.   Therefore  ${\eus F}_a$ is not maximal.
 \end{ex}
 
It is quite possible that there are some wide classes of nilpotent 
elements in type $C$ for which "codim 3" condition holds. For example, 
it is satisfied for  nilpotent elements given by partitions $(d^k)$
with odd $d$ and even $k$. By the contrast, it is not satisfied 
for partitions $(d^k)$ with even $d$ and  $k>1$. 

\section{Explicit formulas for  symmetric invariants 
        of centralisers in type $A$}\label{forms}

In types $A$ and $C$ algebras of symmetric invariants 
${\mathcal S}(\gt g_e)^{\gt g_e}$ were described 
in \cite{ppy}. The outline of that approach is given in 
Section~\ref{str}. In type $A$ we have an alternative description 
of 
${\mathcal S}(\gt  g_e)^{\gt g_e}$ suggested by Brown and Brundan  
\cite{bb}. They reproved that this algebra is a polynomial algebra in 
$\rk\gt g$ variables.  
Comparing the approaches of \cite{bb} and \cite{ppy} 
we confirm \cite[Conjecture 4.1]{ppy}.

Brown and Brundan used different notation. 
At first we should reinterpret symbols
$e_{i,j;r}$ introduced in  \cite{bb} in terms of $\xi_i^{j,r}$. 
According to \cite[Formula (1.1)]{bb}, 
$e_{i,j;r}$ is a sum of matrix units 
$e_{h,k}$, where $w_h$ is a basis
vector of $\VV[i]$ and $w_k$ is a basis 
vector of $\VV[j]$, in the notation of 
Section~1 of the present paper. Thus 
$e_{i,j;r}\in\Hom(\VV[j],\VV[i])$. 
More precisely, $e_{i,j;r}$ is a sum 
of the matrix units on the (above)
diagonal line in the $i,j$-rectangular, see Picture~\ref{pikcha_Arnold}.
Hence $e_{i,j;r}=\xi_j^{i,s}$ for some $s$.
In order to calculate $s$, note that if $r=\lambda_j-1=d_j$,
then $s=d_i$ and for $r=\lambda_j-\min(\lambda_i,\lambda_j)$
we get $s=d_i-\min(d_i,d_j)$.
The final answer is that 
$e_{i,j;r}=\xi_j^{i,s}$ with  $s=r+d_{i}-d_{j}$. 
 
The cardinality of a finite set $I$ is denoted by $|I|$.
Given a permutation $\sigma$ of a subset 
$I=\{i_1,\ldots,i_m\}\subset\{1,\ldots,k\}$ and a nonnegative function $\bar
s\colon\,I\to\mathbb Z_{\ge 0}$, we associate with the triple
$(I,\sigma, \bar s)$ the monomial 
$$
\Xi(I,\sigma,\bar s)\,:=\,\,\xi_{i_1}^{\sigma(i_1),\,\bar
s(i_1)}\xi_{i_2}^{\sigma(i_2),\,\bar s(i_2)}
    \ldots\xi_{i_m}^{\sigma(i_m),\,\bar s(i_m)}\in {\mathcal S}(\gt g_e)
$$
of degree $m=|I|$. If $\bar s(i_j)$ does not satisfies the restriction
on $s$ given in Section~1, then we assume that
$\xi_{i_j}^{\sigma(i_j),\bar s(i_j)}=0$.
For every $\Xi=\Xi(I,\sigma,\bar s)$ we denote by
$\lambda(I,\sigma,\bar s)$ the weight of $\Xi$ with respect to 
$h$, where $h$ is a characteristic of $e$.
Obviously, $\lambda(I,\sigma,\bar s)$ is the sum of the 
$\ad h$-eigenvalues ($h$-weights)
of the factors
$\xi_{i_j}^{\sigma(i_j),\bar s(i_j)}$.

Suppose that $\gt g=\gt{gl}(\VV)$.
Let $\{\Delta_1,\ldots \Delta_{\rk\g}\}$ be a generating set in 
$\mathbb F[\gt g]^{\gt g}$ such that $\Delta_i(\xi)$ are coefficients of the 
characteristic polynomial of $\xi\in\gt g$.  Identifying $\gt g$ and 
$\gt g^*$ we identify also $\mathbb F[\gt g]^{\gt g}$ and 
${\mathcal S}(\gt g)^{\gt g}$. 
Let $\{F_i\}$ be the corresponding (to $\{\Delta_i\}$) set of generators 
of ${\mathcal S}(\gt g)^{\gt g}$. By a result of 
\cite{ppy}, the ${^e\!}F_i$'s form a generating set of 
${\mathcal S}(\gt g_e)^{\gt g_e}$. The following statement 
was conjectured to be true in \cite{ppy}. It will be proved in 
this section. 

\begin{thm}\label{explicit} 
Let $1\le\ell\le\rk\gt g$ and set $m:=\deg{^e\!}F_\ell$. Then 
up to a non-zero constant,
$$
{^e\!}F_\ell\,=\,\,\,\sum\limits_{|I|\,=\,m,\,\,
\lambda(I,\sigma,\bar s)\,=\,2(\ell-m)} 
({\rm sgn}\,\sigma)\,\Xi(I,\sigma,\bar s),
$$
where the summation is taken over all subsets $I$,
all permutations $\sigma$ of $I$, and over all functions 
$\bar s$.
\end{thm}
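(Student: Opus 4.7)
My plan is to compute the lowest Kazhdan-degree component of $F_\ell\vert_{{\mathbb S}_e}$ directly from the principal-minor expansion of the characteristic polynomial, and match the outcome with the sum in the theorem. First, for $\xi = e + y \in {\mathbb S}_e$, I would write $F_\ell(\xi)$ as the sum over all $\ell$-subsets $J \subset \{v_{i,j}:=e^j{\cdot}w_i\}$ of the principal minors $\det(\xi\vert_J)$. Each minor expands as $\sum_\tau ({\rm sgn}\,\tau)\prod_{t\in J} (\xi)_{\tau(t),t}$, and the non-vanishing monomials are products of two kinds of entries: ``$e$-arrows'' $v_{i,j} \mapsto v_{i,j+1}$ contributing the scalar $1$, and ``$y$-arrows'' contributing linear coordinates on ${\mathbb S}_e$. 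Under the Killing form identification ${\mathbb F}[{\mathbb S}_e] \cong {\mathcal S}(\gt g_e)$, each $y$-arrow coordinate corresponds to one of the basis vectors $\xi_i^{j,s}$.

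Second, I would exploit the Kazhdan grading on ${\mathbb F}[{\mathbb S}_e]$, in which $\xi_i^{j,s}$ has degree $(d_i - d_j + 2s) + 2$. Since $F_\ell$ is ${\rm ad}(h)$-invariant on $\gt g^*$, its restriction to ${\mathbb S}_e$ is Kazhdan-homogeneous of degree $2\ell$, so $^{e\!}F_\ell$ consists exactly of those monomials of minimal polynomial degree $m$ and total Kazhdan degree $2\ell$. For a monomial $\Xi(I,\sigma,\bar s)$ of polynomial degree $m$, the Kazhdan condition $\sum_{i\in I}(d_i - d_{\sigma(i)} + 2\bar s(i)) + 2m = 2\ell$ collapses, because $\sigma$ permutes $I$, to $\lambda(I,\sigma,\bar s) = 2(\ell - m)$, matching the restriction in the theorem.

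Third, I would analyse the cycle structure of the permutation $\tau$ on $J$. Each cycle of $\tau$ is a closed walk alternating $e$- and $y$-arrows; minimising the total number of $y$-arrows forces every non-trivial cycle to contain exactly one $y$-arrow, while the remaining cycles collapse to identity fixed points along pure $e$-chains. The combinatorial data of such a minimal cycle cover is exactly a triple $(I,\sigma,\bar s)$: the set $I$ records the Jordan blocks hit by a $y$-arrow, $\sigma$ is the induced permutation of $I$, and $\bar s(i)$ is the position-shift of the unique $y$-arrow leaving block $i$. Summing over all admissible $J$ and $\tau$ that yield the same monomial, one obtains the expression $\sum({\rm sgn}\,\sigma)\,\Xi(I,\sigma,\bar s)$ up to a universal non-zero constant.

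The principal obstacle is the bookkeeping of signs and multiplicities: one must verify that ${\rm sgn}(\tau)$ collapses to ${\rm sgn}(\sigma)$ after contracting the pure-$e$ fixed points, and that the combinatorial multiplicity of each monomial is the same universal constant independent of $(I,\sigma,\bar s)$. A technically cleaner route, which also confirms \cite[Conjecture 4.1]{ppy} as the paper announces, is to invoke Brown and Brundan's construction \cite{bb} via the dictionary $e_{i,j;r} = \xi_j^{i,\,r+d_i-d_j}$ established in the preamble above: their explicit column-determinantal generators of $Z({\bf U}(\gt g_e))$ pass in the associated graded to elements of ${\mathcal S}(\gt g_e)^{\gt g_e}$ whose determinantal expansion is term-by-term the right-hand side of the theorem. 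It then suffices to identify these with $^{e\!}F_\ell$ up to a non-zero scalar, which can be tested on the regular point $\gamma$ of Lemma~\ref{gamma} using that both families are free generators of ${\mathcal S}(\gt g_e)^{\gt g_e}$ with matching polynomial degree and $h$-weight.
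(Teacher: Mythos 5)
Your second route (the one you call ``cleaner'') is essentially the paper's approach in spirit --- Brown--Brundan's formula plus a comparison --- but the comparison step is where your argument has a genuine gap. You propose to ``test on the regular point $\gamma$ of Lemma~\ref{gamma}'' and invoke that both $\{^{e\!}F_\ell\}$ and $\{\overline{z_\ell}\}$ are free generating sets with matching degree and $h$-weight. Neither piece suffices: a single-point evaluation cannot establish proportionality of polynomials, and ``same degree and $h$-weight'' does not rule out that $\overline{z_\ell}$ differs from a scalar multiple of $^{e\!}F_\ell$ by a product of lower generators with the same total degree and weight (e.g.\ ${^e\!}F_{\ell_1}\,{^e\!}F_{\ell_2}$ with $\ell_1+\ell_2=\ell$ and $\deg{^e\!}F_{\ell_1}+\deg{^e\!}F_{\ell_2}=\deg{^e\!}F_\ell$). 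The paper's actual argument avoids this by restricting not to a point but to the Brown--Brundan \emph{affine slice} $\eta+V$ with $\eta=\sum_{i=1}^{k-1}(\xi_{i+1}^{i,d_i})^*$ and $V=\operatorname{span}\{(\xi_1^{i,s})^*\}$, on which the restriction $\psi\colon\,\mathcal{S}(\gt g_e)^{\gt g_e}\to\ff[\eta+V]$ is an isomorphism by \cite{bb}. The key observation, which you should check, is that on this slice every surviving monomial $\Xi(I,\sigma,\bar s)$ forces $I=\{1,\dots,m\}$ and $\sigma$ a single cycle, so both $\psi({^e\!}F_\ell)$ and $\psi(\overline{z_\ell})$ collapse to scalar multiples of the \emph{single} coordinate $\xi_1^{m,s}$ with $s=\ell-(d_1+\cdots+d_{m-1})-m$; proportionality is then immediate and pulls back through the injective $\psi$.

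Your first route --- a direct principal-minor expansion over closed walks alternating $e$-arrows and $y$-arrows, minimizing the number of $y$-arrows per cycle --- is genuinely different from the paper and would yield a self-contained combinatorial proof if carried through. Your Kazhdan-grading reduction of the weight constraint to $\lambda(I,\sigma,\bar s)=2(\ell-m)$ is correct and matches Lemma~\ref{weights}. But as you say yourself, the sign collapse ${\rm sgn}(\tau)\rightsquigarrow{\rm sgn}(\sigma)$ after contracting pure-$e$ chains and the universality of the multiplicity are exactly the hard content, and they are not addressed. Without that, the first route is a plausible plan, not a proof. The safest way forward is your second route, repaired by replacing the single-point test at $\gamma$ with the slice restriction $\psi$ as above.
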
 

\begin{lm}\label{weights} In the above notation we have 
$\lambda(I,\sigma,\bar s)=2\sum\limits_{j\in I} \bar s(j)$.
\end{lm}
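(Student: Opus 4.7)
The plan is to compute the $\mathrm{ad}\,h$-weight of a single basis element $\xi_i^{j,s}$, then sum over the factors of $\Xi(I,\sigma,\bar s)$ and use that $\sigma$ is a permutation.

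First, I would pin down the $h$-action on $\mathbb{V}$. Since $(e,h,f)$ is an $\mathfrak{sl}_2$-triple and each $\mathbb{V}[i]$ is an irreducible $(d_i+1)$-dimensional $\mathfrak{sl}_2$-submodule, the cyclic vectors $w_i$ can be chosen (a standard harmless adjustment) so that $w_i$ is a lowest-weight vector, i.e.\ $h\cdot w_i = -d_i w_i$. Then $e$ raises the $h$-weight by $2$, so
$$
h\cdot (e^a\cdot w_i) \;=\; (2a-d_i)\,(e^a\cdot w_i), \qquad 0\le a\le d_i.
$$

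Next I would compute the $\mathrm{ad}\,h$-weight of the basis element $\xi_i^{j,s}$. By definition $\xi_i^{j,s}$ sends $e^a\cdot w_i$ (of $h$-weight $2a-d_i$) to $e^{a+s}\cdot w_j$ (of $h$-weight $2(a+s)-d_j$), and annihilates $e^a\cdot w_t$ for $t\neq i$. Therefore $\xi_i^{j,s}$ is $\mathrm{ad}\,h$-homogeneous of weight
$$
\bigl(2(a+s)-d_j\bigr)-\bigl(2a-d_i\bigr)\;=\;2s+d_i-d_j.
$$

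Finally, the weight of the monomial $\Xi(I,\sigma,\bar s)=\prod_{i\in I}\xi_i^{\sigma(i),\bar s(i)}\in\mathcal S(\gt g_e)$ is the sum of the weights of its factors:
$$
\lambda(I,\sigma,\bar s)\;=\;\sum_{i\in I}\bigl(2\bar s(i)+d_i-d_{\sigma(i)}\bigr).
$$
Since $\sigma$ is a permutation of the finite set $I$, the terms $d_i$ and $d_{\sigma(i)}$ are summed over the same multiset of indices, so $\sum_{i\in I}d_i=\sum_{i\in I}d_{\sigma(i)}$ and those contributions cancel, leaving $\lambda(I,\sigma,\bar s)=2\sum_{i\in I}\bar s(i)$, as claimed. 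There is no substantial obstacle: the statement is a direct consequence of the $\mathfrak{sl}_2$-grading and the permutation property of $\sigma$.
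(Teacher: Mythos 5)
Your proposal is correct and follows essentially the same route as the paper: compute the $\ad h$-eigenvalue of each factor $\xi_i^{j,s}$ and observe that the $d$-terms cancel when summed over the permutation $\sigma$. Incidentally, your intermediate formula $2s+d_i-d_j$ is the accurate one (the paper's proof states $2(d_i-d_j+s)$, an apparent slip in the coefficient of $d_i-d_j$), but since $\sum_{j\in I}\bigl(d_j-d_{\sigma(j)}\bigr)=0$ either version yields the stated identity $\lambda(I,\sigma,\bar s)=2\sum_{j\in I}\bar s(j)$.
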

\begin{proof} It is not difficult to compute that the weight of 
$\xi_i^{j,s}$ is equal to $2(d_i-d_j+s)$. Therefore 
$$
\lambda(I,\sigma,\bar s)=2\sum_{j\in I} (d_j-d_{\sigma(j)}+\bar s(j))=
 2\sum_{j\in I} \bar s(j).
$$
The second equality holds because $\sigma$ is a permutation. 
\end{proof}

Set $\tilde{\xi}_i^{j,s}:=\xi_i^{j,s}-\delta_{s,0}\delta_{i,j}(i-1)(d_i+1)$, 
where $\delta_{i,j}=1$ for $i=j$ and is zero otherwise.  
Note that $e_{i,i;0}=\xi_i^{i,0}$ and, as above, 
for a permutation $\sigma$ of $I$ we have 
$\sum\limits_{j\in I} (\bar s(j)+d_j-d_{\sigma(j)})=
  \sum\limits_{j\in I} \bar s(j)$. 
Taking these two facts into account,
we rewrite Formulas~(1.2) and (1.3) of \cite{bb} in 
the $\xi_j^{i,s}$-notation. 
For each set $I$ of indices $1\le i_1< i_2 < \ldots < i_m$ and
each permutation $\sigma$,
define 
$$\widetilde{\Xi}(I,\sigma,\bar s):=
 \,\,\tilde{\xi}_{i_1}^{\sigma(i_1),\,\bar
s(i_1)}\tilde{\xi}_{i_2}^{\sigma(i_2),\,\bar s(i_2)}
    \ldots\tilde{\xi}_{i_m}^{\sigma(i_m),\,\bar s(i_m)}\in {\bf U}(\gt g_e).
$$
Let $\ell$ be in the range $1\le\ell\le\rk\gt g$ 
and $m=\deg{^e\!}F_\ell$. In view of Lemma~\ref{weights}, 
we can express elements $z_\ell$ of \cite{bb} as follows 
\begin{equation}\label{bb2}
z_\ell=\sum_{|I|=m,\,\, \lambda(I,\sigma,\bar s)=2(\ell-m)}
 ({\rm sgn}\,\sigma)\,\widetilde{\Xi}(I,\sigma,\bar s),
\end{equation}
where the summation is taken over all subsets $I$,
all permutations $\sigma$ of $I$, and over all functions 
$\bar s$.

The main theorem of \cite{bb} states that the elements $z_\ell$ generate the centre 
of ${\bf U}(\gt g_e)$ and that their symbols,
elements of ${\mathcal S}(\gt g_e)$,  
denoted $\overline{z_\ell}$,
are algebraically independent. 

\vskip0.5ex
{\it Proof of Theorem~\ref{explicit}.} \ 
In \cite{ppy} a slightly weaker statement was proved. 
More precisely, it was shown that
for each $\ell\le \rk\gt g$, we have
$$
{^e\!}F_\ell\,=\,\,\,\sum\limits_{|I|\,=\,m,\,\,\lambda(I,\sigma,\bar
s)\,=\,2(\ell-m)} a(I,\sigma,\bar s)\,\Xi(I,\sigma,\bar s)
$$
for some $a(I,\sigma,\bar s)\in\mathbb F$.
Here we prove that 
each ${^e\!}F_\ell$ is a non-zero  multiple of the 
symbol $\overline{z_\ell}$. 

Following Brown and Brundan, restrict the 
invariants to an affine 
slice $\eta+V\subset\gt g_e^*$. In our notation, 
$\eta=\sum_{i=1}^{k-1}(\xi_{i+1}^{i,d_i})^*$ and 
$V$ is the subspace generated by $(\xi_1^{i,s})^*$. 
According to \cite{bb}, this 
restriction map 
$\psi:\enskip {\mathcal S}(\gt g_e)^{\gt g_e}\to \ff[\eta+V]$ 
is an isomorphism.

Suppose that $\deg {^e\!}F_\ell=m$. Then both 
$\psi({^e\!}F_\ell)$ and $\psi(\overline{z_\ell})$
are proportional to $\xi_1^{m,s}$ with 
$s=\ell-(d_1+\ldots+d_{m-1})-m$. This completes the proof 
of Theorem~\ref{explicit}. 
\hfill{$\Box$}

\section{Fibres of the quotient morphism 
$\gt g_e^*\to\gt g_e^*/\protect\!\protect\!/ G_e$} 

Suppose that  $\gt g$ is either of type $A$ or $C$.
Then ${\mathcal S}(\gt g_e)^{G_e}=\mathbb F[H_1,\ldots,H_{\rk\gt g}]$,
where $H_i={^e\!}F_i$ for a certain (good) generating 
set $\{F_i\}\subset{\mathcal S}(\gt g)$
of $\gt g$-invariants, 
see \cite{ppy}. In particular, the algebra of symmetric 
$G_e$-invariants is
finitely generated and we can consider the quotient morphism
$\gt g_e^*\to \gt g_e^*/\!\!/G_e$, where 
$\gt g_e^*/\!\!/G_e={\rm Spec}\,{\mathcal S}(\gt g_e)^{G_e}$ and 
each $x\in\gt g_e^*$ maps to 
$(H_1(x),\ldots,H_{\rk\g}(x))$. 
In this section we are interested in the fibres of the quotient 
morphism. By \cite[Section~5]{ppy}, in type $A$ all fibres of this morphism 
are of dimension $\dim\gt g_e-\rk\gt g$.

Consider a point $\alpha=\sum_{i=1}^{k} a_i(\xi_i^{i,d_i})^*\in\gt g_e^*$, 
where $a_i$ are pairwise distinct non-zero numbers and $\gt g=\gt{gl}(\VV)$.
As was already mentioned, it is a generic point and 
$\gt h=(\gt g_e)_\alpha$ is a generic stabiliser for the coadjoint 
action of $\gt g_e$. In case $e\in\gt{sp}(\VV)$,
similar statements remain true for the restriction 
of $\alpha$ to $\gt{sp}(\VV)_e$ and $\gt h\cap\gt{sp}(\VV)$, see \cite{fan}.
Set $H:=(\GL(\VV)_e)_\alpha$. 
Then $H$ is connected and $(\GL(\VV)_e)_\gamma$ 
is conjugate to $H$ whenever 
$(\gt{gl}(\VV)_e)_\gamma$ is conjugate to $\gt h$. 
In other words, $H$ is a generic stabiliser 
for the coadjoint action of $GL(\VV)_e$. 
Again, if $e\in\gt{sp}(\VV)$, then 
$H\cap Sp(\VV)$ is a generic stabiliser for the 
coadjoint action of $Sp(\VV)_e$. 
 
Recall that $\gt h=\left<\xi_i^{i,s}\right>$ and 
$\gt h$ containes a maximal torus 
$\gt t=\left<\xi_i^{i,0}\right>$ of $\gt{gl}(\VV)_e$. 
Thereby $H=T\ltimes U$, where $T$ is a maximal torus of 
$GL(\VV)_e$ and $U$ is contained in the unipotent radical of $GL(\VV)_e$.
Likewise, for $e\in\gt{sp}(\VV)$, the generic stabiliser 
$H\cap Sp(\VV)$ contains a maximal 
torus $T\cap Sp(\VV)$ of $Sp(\VV)_e$.  
Applying the following lemma, we get that 
generic coadjoint orbits of centralisers in types $A$ and $C$ 
are closed. 

\begin{lm}\label{orbit} Suppose that an algebraic group $G$ 
acts on an affine variety $X$ and a stabiliser $G_x$ of a 
point $x\in X$ contains a maximal torus $T$ of $G$.
Then the orbit $Gx$ is closed. 
\end{lm}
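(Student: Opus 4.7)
The plan is to combine a Borel subgroup argument with two standard ingredients: the closedness of every orbit of a unipotent algebraic group on an affine variety, and the projectivity of the quotient $G/B$ by a Borel subgroup. One may first reduce to the case where $G$ is connected: since every maximal torus of $G$ lies in the identity component $G^{\circ}$, one has $T\subseteq (G^{\circ})_x$, and once $G^{\circ}x$ is known to be closed the orbit $Gx$ is a finite union of translates $g_i(G^{\circ}x)$ and therefore closed.

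Assuming $G$ connected, I would fix a Borel subgroup $B$ of $G$ containing $T$, and write $B=TU$ with $U=R_u(B)$. The key computation is that $Bx$ collapses to the unipotent orbit $Ux$: for $t\in T$ and $u\in U$ one has $tux=(tut^{-1})x$, using $tx=x$ and the fact that $T$ normalises $U$. The classical theorem that orbits of unipotent algebraic groups on affine varieties are always closed then yields that $Ux$, and hence $Bx$, is closed in $X$.

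The final step is to upgrade closedness of $Bx$ to closedness of $Gx$. For this, I would consider the associated bundle $G\times^{B}Bx\to G/B$ together with the morphism
\[
 i : G\times^{B}Bx\longrightarrow X\times G/B, \qquad [g,y]\mapsto (gy,gB).
\]
Its image is the locus $Z=\{(x',gB)\in X\times G/B : g^{-1}x'\in Bx\}$, which is closed because it is the $B$-quotient of the $B$-saturated closed subset $\{(x',g)\in X\times G : g^{-1}x'\in Bx\}$ (itself the preimage of the closed set $Bx$ under the morphism $(x',g)\mapsto g^{-1}x'$). Since $G$ is connected, $G/B$ is projective -- via the Levi decomposition this reduces to the classical projectivity of the flag variety of the reductive quotient -- so the projection $\pi_X:X\times G/B\to X$ is proper, and hence sends the closed set $Z$ to the closed set $\pi_X(Z)=G(Bx)=Gx$.

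The principal obstacle is the invocation of the closedness of unipotent-group orbits on affine varieties; once this ingredient is granted, the rest of the argument is purely geometric and formal.
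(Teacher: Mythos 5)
Your proof is correct and follows essentially the same route as the paper: choose a Borel subgroup $B\supset T$, observe that $Bx$ coincides with the orbit of the unipotent radical of $B$ (hence is closed by the Kostant--Rosenlicht theorem), and then use completeness of $G/B$ to conclude that $G{\cdot}Bx=Gx$ is closed. The only difference is that you spell out explicitly the last step, which the paper delegates to a citation of Steinberg (Lemma~2 in Section~2.13), and you add the harmless reduction to connected $G$.
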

\begin{proof} Let us choose a Borel subgroup $B\subset G$ 
containing $T$. 
Then the $B$-orbit $Bx$ is closed, because 
it coincides with the orbit of a unipotent group,
in this case of the unipotent radical of $B$.

We have a closed subgroup $B\subset G$ such that 
the quotient $G/B$ is complete and the orbit $Bx$ is closed.
It follows that $G{\cdot}Bx=Gx$ is also closed, 
see e.g. \cite[Lemma~2 in Section~2.13]{stein}.
\end{proof}

\noindent
Lemma~\ref{orbit} is a well-known and classical fact. In case 
of complex reductive group $G$, similar result was 
proved by Kostant in 1963, see \cite[proof of Lemma~5]{kost63}.

\begin{thm}\label{fibres} 
If $\gt g$ is either $\gt{gl}(\VV)$ 
or $\gt{sp}(\VV)$, then a generic fibre of the quotient 
morphism $\gt g_e^*\to \gt g_e^*/\!\!/G_e$ consists of 
a single closed $G_e$-orbit. 
\end{thm}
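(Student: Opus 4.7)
The plan is to show that the generic fibre is irreducible by combining closedness of the generic coadjoint orbit (Lemma~\ref{orbit}) with equidimensionality of fibres and a transverse slice argument. Take $\alpha=\sum_{i=1}^{k}a_{i}(\xi_{i}^{i,d_{i}})^{*}$ with pairwise distinct nonzero $a_{i}$. By the discussion preceding the theorem, $(\gt{gl}(\VV)_{e})_{\alpha}$ contains the maximal torus $\langle\xi_{i}^{i,0}\rangle_{\mathbb F}\subset\gt{gl}(\VV)_{e}$, and in the symplectic case $(\gt{sp}(\VV)_{e})_{\alpha}$ contains a maximal torus of $\gt{sp}(\VV)_{e}$. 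Hence Lemma~\ref{orbit} forces $G_{e}\alpha$ to be closed. Elashvili's conjecture (\cite{fan}) gives $\ind\gt g_{e}=\rk\gt g$, so $\dim G_{e}\alpha=\dim\gt g_{e}-\rk\gt g$, which is also the dimension of every fibre of $\pi$ in type $A$ by \cite[Section 5]{ppy} and of the generic fibre in type $C$ by a dimension count against the polynomial quotient. Thus $G_{e}\alpha$ is already a closed irreducible component of $\pi^{-1}(\pi(\alpha))$, and the same conclusion propagates over the $G_{e}$-saturated open subset of $\gt g_{e}^{*}$ where the stabiliser is conjugate to $(G_{e})_{\alpha}$.

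It remains to exclude the existence of any second top-dimensional component. In type $A$, I would invoke the affine slice $\eta+V\subset\gt g_{e}^{*}$ from Section~\ref{forms}, where $\eta=\sum_{i=1}^{k-1}(\xi_{i+1}^{i,d_{i}})^{*}$ and $V=\langle(\xi_{1}^{i,s})^{*}\rangle$. The Brown--Brundan isomorphism
\[
\psi\colon\, {\mathcal S}(\gt g_{e})^{G_{e}}\,\xrightarrow{\sim}\,\mathbb F[\eta+V]
\]
shows that $\pi|_{\eta+V}$ is an isomorphism onto $\gt g_{e}^{*}/\!\!/G_{e}$, so every fibre of $\pi$ meets the slice in exactly one point. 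Since $\eta$ itself is regular by \cite[Proposition 3.3]{ppy}, a generic point of $\eta+V$ is regular, and orbits through distinct slice points lie in distinct fibres of $\pi$ and are therefore disjoint. A dimension count then gives $\dim G_{e}\cdot(\eta+V)=\rk\gt g+(\dim\gt g_{e}-\rk\gt g)=\dim\gt g_{e}$, so $G_{e}\cdot(\eta+V)$ is dense in $\gt g_{e}^{*}$. For generic $\alpha$ we may write $\alpha=g\beta_{\alpha}$ with $\beta_{\alpha}\in\eta+V$ uniquely determined; if $\pi(\alpha_{1})=\pi(\alpha_{2})$ for two generic points, the uniqueness of the slice point in the common fibre forces $\beta_{\alpha_{1}}=\beta_{\alpha_{2}}$, and hence $G_{e}\alpha_{1}=G_{e}\alpha_{2}$.

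In type $C$ the strategy is the same: one constructs an analogous affine slice transverse to the generic orbits using the PPY description of the generators $H_{i}={}^{e\!}F_{i}$ from Section~\ref{str}, and verifies via \cite[Section 3]{ppy} that a generic point of this slice is regular. The main obstacle I foresee is precisely this construction in type $C$, because the clean Brown--Brundan formulas of Section~\ref{forms} are specific to type $A$; one has to exploit the symmetric decomposition $\widetilde{\gt g}_{e}=\gt g_{e}\oplus(\widetilde{\gt g}_{e})_{1}$ from Section~1 to cut down the type-$A$ slice $\eta+V$ to its symplectic counterpart while preserving the isomorphism with the quotient.
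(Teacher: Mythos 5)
Your opening paragraph — closedness of the generic orbit via Lemma~\ref{orbit}, the dimension count $\ind\gt g_e=\rk\gt g$, and the conclusion that $G_e\alpha$ is a closed irreducible component of its fibre — matches the first half of the paper's own proof almost exactly. The two arguments diverge in how they rule out a second component, and your route has a real gap there.

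You use the Brown--Brundan slice $\eta+V$ and the isomorphism $\psi$ to deduce that each fibre of $\pi$ meets $\eta+V$ once, then argue that two \emph{generic} points $\alpha_1,\alpha_2$ of $\gt g_e^*$ lying in the same fibre must be in the same orbit because both have the form $g_i\beta$ with the same slice point $\beta$. But this only controls the open piece $\pi^{-1}(u)\cap G_e\cdot(\eta+V)$ of a generic fibre. It does not exclude the possibility of further irreducible components of $\pi^{-1}(u)$ lying entirely inside the complement of $G_e\cdot(\eta+V)$ (a proper closed/constructible subset, but one that could still meet every fibre). To close this you would need either an equidimensionality statement for the fibres combined with a dimension count against the non-generic locus, or a connectedness statement for the generic fibre; neither appears in your sketch. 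In type $C$ you openly concede the slice construction is missing, so that case is also incomplete. A further small point: the Brown--Brundan isomorphism is stated for $\cS(\gt g_e)^{\gt g_e}$, so you should record the identification $\cS(\gt g_e)^{G_e}=\cS(\gt g_e)^{\gt g_e}$ before claiming $\psi$ trivialises the quotient.

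The paper instead proves connectedness of the generic fibre abstractly: since $\cS(\gt g_e)^{G_e}=\cS(\gt g_e)^{\gt g_e}$ in types $A$ and $C$, any element of $\cS(\gt g_e)$ algebraic over the fraction field of the invariants is itself $\gt g_e$- and $G_e$-invariant, so $\cS(\gt g_e)^{G_e}$ is algebraically closed in $\cS(\gt g_e)$; by Theorem~\ref{fibre} from the appendix this makes generic fibres connected. Connectedness plus the presence of a closed orbit of full dimension as a component then forces the fibre to equal that orbit. This handles types $A$ and $C$ uniformly and avoids both the slice construction and the need to rule out stray components by hand. If you want to pursue the slice route, you must add the argument that non-generic components of full dimension cannot persist over a dense open subset of the quotient, and you must produce the symplectic analogue of $\eta+V$.
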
 
\begin{proof}
In both these cases the coadjoint action of 
$G_e$ has a generic stabiliser, which contains 
a maximal torus of $G_e$, see \cite[Section~4]{fan} and 
discussion before Lemma~\ref{orbit}. 
By Lemma~\ref{orbit}, generic orbits are closed. 
Since $\ind\gt g_e=\rk\gt g$, generic coadjoint 
$G_e$-orbits and generic fibres of the quotient morphism have the 
same dimension, $\dim\gt g_e-\rk\gt g$. 
Hence there is an open subset 
$U\subset\gt  g_e^*/\!\!/G_e$ such that the fibre 
over each $u\in U$ contains a closed $G_e$-orbit of 
maximal dimension and that orbit is an irreducible 
component of the fibre.   

In cases of our interest 
${\mathcal S}(\gt g_e)^{G_e}={\mathcal S}(\gt g_e)^{\gt g_e}$,
see  \cite[Theorems~4.2 and 4.4]{ppy}.
Hence each element of 
${\mathcal S}(\gt g_e)$, which is algebraic over 
${\rm Quot}\,(\ff[\gt g_e^*]^{G_e})$, is $\gt g_e$- and $G_e$-invariant. 
This means that
${\mathcal S}(\gt g_e)^{G_e}$ is algebraically closed in 
${\mathcal S}(\gt g_e)$. 
By Theorem~\ref{fibre}, proved in the appendix, 
generic fibres of the quotient morphism are connected.
Shrinking $U$ if necessary, 
we may assume that the fibres over elements of $U$ are 
connected. Then each of them consists of a single 
closed $G_e$-orbit of maximal dimension.
\end{proof} 
 
\noindent
Theorem~\ref{fibres}
was proved in a discussion with A.\,Premet during his visit to the 
Max-Planck Institut f\"ur Mathematik (Bonn) 
in Spring 2007. 
 
\begin{rmk} The proof of Theorem~\ref{fibres}
can be completed in a slightly different way.
The ring $\mathbb F[\gt g^*]$ is a unique factorisation 
domain. If $a\in{\mathcal S}(\gt g_e)^{\gt g_e}$, 
then all prime factors of $a$ are also $\gt g_e$-invariant. 
One can show quite elementary that 
the field ${\rm Quot}\,(\ff[\gt g_e^*]^{G_e})$
is algebraically closed in $\mathbb F(\gt g^*)$. 
Then generic fibres are known to be irreducible,
see e.g. \cite[Chapter~2, Section~6.1]{Shaf}.
\end{rmk}
 
\begin{rmk} If $\gt g$ is of type $A$ or $C$, then,
as was mentioned above, 
the coadjoint action of 
$G_e$ has a generic stabiliser, which contains 
a maximal torus of $G_e$. This means that 
the ring of semi-invariants 
${\mathcal S}(\gt g_e)^{\gt g_e}_{\rm si}$ 
coincides with ${\mathcal S}(\gt g_e)^{\gt g_e}$.
Lie algebras $\gt q$ with 
${\mathcal S}(\gt q)^{\gt q}_{\rm si}$
being a polynomial ring are actively studied,
see e.g. \cite{oomsb}. In particular, if 
$\gt g$ is of type $A$ and $\gt g_e$ is non-Abelian,
then \cite[Proposition 1.6]{oomsb} combined with 
Theorem~\ref{c3A}, implies that each irreducible component
of $(\gt g_e^*)_{\rm sing}$ has dimension $\dim\gt g_e{-}3$.   
\end{rmk} 
 
\vskip0.7ex

In contrast with a generic fibre, 
the null-cone $\cN(e)$ (the fibre containing zero) 
may have infinitely many closed orbits and 
there might be no regular elements 
(and hence no open orbits) in some of its components. 
Dealing with $\cN(e)$, 
we will freely use the precise formulas for the generators
$H_i={^e\!}F_i$, obtained in Section~\ref{forms}. 

\begin{ex} Let $e\in\cN(\gt{gl}_6)$ be  given by 
the partition $(4,2)$. Here $\dim\gt g_e-\rk\gt g=4$, hence 
all irreducible components of $\cN(e)$ are of dimension $4$.
There are $4$ elements in the centre of 
$\gt g_e$, they are linear invariants $H_1,H_2,H_3,H_4$. The other  
 two  invariants $H_5$ and $H_6$ 
are of degree $2$. Until the end of the example, we replace 
$\gt g_e^*$ by a subspase $P\subset\gt g_e^*$ 
defined by $H_1=\ldots=H_4=0$ and regard $\cN(e)\subset P$ as the
zero set of $H_5$ and $H_6$. 

Then restricted to $P$, the invariants $H_5$ and $H_6$ 
are expressed by the formulas    
$H_6=\xi_1^{2,1}\xi_2^{1,3}$ and 
$H_5=\xi_1^{2,1}\xi_2^{1,2}+\xi_1^{2,0}\xi_2^{1,3}$.  
Both are zero on the linear subspace defined by 
$\xi_1^{2,1}=\xi_2^{1,3}=0$. 
Hence a four-dimensional 
vector space $X\subset P$ 
generated by vectors  
$$
(\xi_1^{1,0})^*-(\xi_2^{2,0})^*,  (\xi_1^{1,1})^*-(\xi_2^{2,1})^*, 
    (\xi_1^{2,0})^*, (\xi_2^{1,2})^*
$$
is an irreducible component of the null-cone $\cN(e)$. 
The action of $G_e$ on $X$ has a $7$-dimensional 
ineffective kernel. Since coadjoint orbits are 
even-dimensional, $G_e$-orbits on $X$ are either trivial 
or $2$-dimensional. Essentially the only non-trivial actions 
are: 
$$
\begin{array}{l}
\ad^*(\xi_1^{1,0}-\xi_2^{2,0}) (\xi_1^{2,0})^* = (\xi_1^{2,0})^*, \ \
\ad^*(\xi_1^{1,0}-\xi_2^{2,0}) (\xi_2^{1,2})^* = -(\xi_2^{1,2})^*, \\[0.6ex]
\text{ and }\  \  
  -\ad^*(\xi_1^{2,0})(\xi_1^{2,0})^*=\ad^*(\xi_2^{1,2})(\xi_2^{1,2})^*=
 (\xi_1^{1,0})^*-(\xi_2^{2,0})^*. \\
\end{array}
$$
Thus $X$ contains a $2$-parameter family of closed 
$2$-dimensional $G_e$-orbits; two non-closed $2$-dimensional orbits; and 
a $2$-parameter family of $G_e$-invariant points. 
In particular, $X$ contains no regular elements.

For this nilpotent element 
the ideal 
$I=(\cS(\gt g_e)^{\gt g_e}_{\circ})\lhd{\mathcal S}(\gt g_e)$ generated by 
the homogeneous  invariants of positive degree 
is not radical. After restriction to $P$, where $I$ is generated by 
$H_5$ and $H_6$,  
we have $\xi_1^{2,1}\xi_2^{1,2}\not\in I$, but 
$$
(\xi_1^{2,1}\xi_2^{1,2})^2=\xi_1^{2,1}\xi_2^{1,2} H_5-\xi_1^{2,0}
     \xi_2^{1,2} H_6 \in I.
$$
\end{ex}

A very interesting problem is to describe the irreducible components of
$\cN(e)$ in type $A$.
Here we compute the number of these components in two particular cases.

\begin{lm}\cite[Theorem 1.2]{Dima} \label{z2-deg}
Suppose that $\gt q$ is a Lie algebra such that
${\rm codim}\,\gt q^*_{\rm sing}\ge 2$ and 
$H_1,\ldots,H_{\rk\gt q}$ are algebraically independent homogeneous elements 
of ${\mathcal S}(\gt q)^{\gt q}$ with 
$\sum\limits_{i=1}^{\rk\gt q} \deg H_i=({\dim\gt q+\rk\gt q})/{2}$. 
Then $H_1,\ldots,H_{\rk\gt q}$ generate the whole algebra 
${\mathcal S}(\gt q)^{\gt q}$ 
of symmetric $\gt q$-invariants.   
\end{lm}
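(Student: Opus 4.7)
The plan is to prove $\mathcal S(\gt q)^{\gt q}=A:=\mathbb F[H_1,\dots,H_r]$ (writing $r=\rk\gt q$, $n=\dim\gt q$) by comparing $\omega:=dH_1\wedge\cdots\wedge dH_r$ with a canonical top form built from the Poisson structure, and then running a Poincar\'e-lemma descent on degree. First I would build the Pfaffian-type $r$-vector
\[
\Omega:=\iota_{v}\bigl(\pi^{(n-r)/2}\bigr)\in{\textstyle\bigwedge^{r}}\gt q\otimes\mathcal S(\gt q),
\]
obtained from the Kirillov bivector $\pi_x(a,b)=x([a,b])$ (an element of $\bigwedge^{2}\gt q^{*}\otimes\mathcal S(\gt q)$ of polynomial degree $1$) by taking its $((n-r)/2)$-th exterior power and contracting with a fixed volume $v\in\bigwedge^{n}\gt q$. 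Since $\ker\pi_x=\gt q_x$ and $\rk\pi_x=n-r$ precisely on $\gt q^{*}_{\rm reg}$, this $\Omega$ has polynomial degree $(n-r)/2$, vanishes exactly on $\gt q^{*}_{\rm sing}$, and at each regular $x$ generates the $1$-dimensional line $\bigwedge^{r}\gt q_x$.

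Next I compare $\Omega$ with $\omega$. Both live in $\bigwedge^{r}\gt q\otimes\mathcal S(\gt q)$; the degree hypothesis gives $\deg\omega=\sum_i(\deg H_i-1)=(n-r)/2$; and $\gt q$-invariance of each $H_i$ forces $dH_i(x)\in\gt q_x$, so at regular $x$ the value $\omega_x$ also lies in $\bigwedge^{r}\gt q_x$. Hence $\omega=c\cdot\Omega$ for a rational function $c$ regular on $\gt q^{*}_{\rm reg}$; the codimension-$2$ hypothesis and normality of $\gt q^{*}$ (algebraic Hartogs) promote $c$ to a polynomial; matching of polynomial degrees then forces $c$ to be a constant, and algebraic independence of the $H_i$ keeps $c\ne 0$.

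For an arbitrary $F\in\mathcal S(\gt q)^{\gt q}$, the expansion $dF=\sum_i a_i\,dH_i$ is uniquely determined (since $\{dH_i(x)\}$ is a basis of $\gt q_x$ at a generic regular $x$, where $dF(x)$ lies too), and may be inverted by wedging:
\[
a_i=\pm\bigl(dF\wedge dH_1\wedge\cdots\wedge\widehat{dH_i}\wedge\cdots\wedge dH_r\bigr)/\omega.
\]
The same codim-$2$ argument makes each $a_i$ polynomial of degree $\deg F-\deg H_i$, and applying any $\xi\in\gt q$ as a derivation to $dF=\sum_i a_i\,dH_i$ together with the generic independence of $dH_1,\dots,dH_r$ forces $\xi\cdot a_i=0$, i.e.\ $a_i\in\mathcal S(\gt q)^{\gt q}$. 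Strong induction on $\deg F$ (trivial base $\deg F=0$; note $\deg H_i\ge 1$) then yields $a_i\in A$, say $a_i=P_i(H_1,\dots,H_r)$.

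To finish, applying $d$ to $dF=\sum_i P_i(H_1,\dots,H_r)\,dH_i$ gives
\[
\sum_{i<j}\bigl(\partial P_j/\partial H_i-\partial P_i/\partial H_j\bigr)\,dH_i\wedge dH_j=0;
\]
generic linear independence of the $dH_i\wedge dH_j$ forces the closedness condition $\partial P_i/\partial H_j=\partial P_j/\partial H_i$ on $\mathbb A^{r}$, so by the polynomial Poincar\'e lemma there is $G\in\mathbb F[y_1,\dots,y_r]$ with $\partial G/\partial y_i=P_i$; then $F-G(H_1,\dots,H_r)$ has zero differential, hence is constant, and $F\in A$. The main obstacle is the first step: identifying the zero locus of $\Omega$ with $\gt q^{*}_{\rm sing}$ and matching polynomial degrees to force $\omega=c\cdot\Omega$ with $c\in\mathbb F^{\times}$; once this proportionality is in hand, the degree induction and Poincar\'e integration are essentially formal.
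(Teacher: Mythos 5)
The paper does not prove this lemma; it is quoted from Panyushev \cite{Dima}, so there is no internal argument here to compare against. As far as I can check, your proof is correct, and it runs along the same lines as the cited source: compare the Jacobian $r$-vector $\omega=dH_1\wedge\cdots\wedge dH_{\rk\gt q}$ with the Pfaffian $r$-vector $\Omega$ built from the Lie--Poisson bivector, note that both have polynomial degree $(\dim\gt q-\rk\gt q)/2$ and both take values in the line $\bigwedge^{\rk\gt q}\gt q_x$ at regular $x$, and then use the codimension-$\ge 2$ hypothesis together with algebraic Hartogs on the normal space $\gt q^*$ to make the ratio a nonzero constant; this is the step that keeps $\omega$ from vanishing anywhere on $\gt q^*_{\rm reg}$, which is exactly what you need to make the Cramer coefficients $a_i$ polynomial (Hartogs again) and, by equivariance of $d$, $\gt q$-invariant, so that the degree induction can run.

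One simplification you may want to note: the final integration via the closedness condition and the polynomial Poincar\'e lemma is not needed. Since an invariant $F$ may be taken homogeneous of degree $m>0$, contract $dF=\sum_i a_i\,dH_i$ with the Euler vector field $E=\sum_j x_j\partial_{x_j}$. Euler's identity $\iota_E(dG)=(\deg G)\,G$ for homogeneous $G$ then gives
\[
m\,F=\sum_i(\deg H_i)\,a_i H_i,
\]
and once the degree induction has placed the homogeneous invariant coefficients $a_i$ (each of degree $m-\deg H_i<m$) in $\mathbb F[H_1,\dots,H_{\rk\gt q}]$, this identity already exhibits $F$ as an element of that subalgebra.
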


\begin{prop} Suppose that $e\in\gt{gl}_{m+n}$ is defined by the partition 
$(n,1^m)$ with $n\ge 2$. Then $\cN(e)$ has $m+1$ irreducible components.
\end{prop}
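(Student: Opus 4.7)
The plan is to realise $\cN(e)$ explicitly on the Slodowy slice, extract the leading-term invariants from the characteristic polynomial of a slice element, and then recognise the resulting variety as a stratified Krylov-orthogonality locus whose strata are indexed by the dimension of a cyclic subspace. To set up, I parameterise the Slodowy slice $\mathbb S_e=e+\gt g_f$ in the basis $w_1, e w_1,\dots,e^{n-1}w_1, w_2,\dots, w_{m+1}$ and write a point as
\[
M = \begin{pmatrix} A & C \\ B & D \end{pmatrix},
\]
where $A$ is $e|_{\VV[1]}$ plus an upper triangular Toeplitz block with free entries $y_0,\dots,y_{n-1}$; $C$ has non-zero entries only in its first row $(c_1,\dots,c_m)$; $B$ has non-zero entries only in its last column $(b_1,\dots,b_m)$; and $D\in\gt{gl}_m$ is arbitrary. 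Via the Killing form these furnish coordinates on $\gt g_e^*\cong\mathbb S_e$.

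A Schur-complement computation combined with the identity $[(A-\lambda I)^{-1}]_{n,1}=(-1)^{n+1}/P(\lambda)$, where $P(\lambda)=\det(A-\lambda I)$, reduces $B(A-\lambda I)^{-1}C$ to the rank-one matrix $\frac{(-1)^{n+1}}{P(\lambda)}\vec b\vec c^{\,t}$; the matrix determinant lemma then yields
\[
\det(M-\lambda I) = P(\lambda)\,Q(\lambda) + (-1)^{n}\,\vec c^{\,t}\,\mathrm{adj}(D-\lambda I)\,\vec b,\qquad Q(\lambda)=\det(D-\lambda I).
\]
Taking the minimal-polynomial-degree component of each coefficient produces the $\gt g_e$-invariant $^e\!F_\ell$. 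The $n$ linear ones come from the centre $\gt E$ and impose $y_1=\cdots=y_{n-1}=0$ and $ny_0+\mathrm{tr}(D)=0$. After these substitutions $P(\lambda)$ collapses to $(y_0-\lambda)^n$; expanding $\mathrm{adj}(D-\lambda I)=\sum_{k=0}^{m-1}\lambda^{k}\,p_{m-1-k}(D)$ via the Faddeev--LeVerrier recursion and comparing $\rho$-homogeneous pieces coefficient by coefficient in $\lambda$, the remaining $m$ leading invariants $^e\!F_{n+k}$ ($k=1,\dots,m$) reduce, up to an invertible upper-triangular change of basis over $\ff[D]/\chi_D(D)$, to the Krylov orthogonality relations
\[
\vec c^{\,t}D^k\vec b = 0, \qquad k=0,1,\dots,m-1.
\]

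Therefore $\cN(e)$ is isomorphic, via the projection to the $(\vec b,\vec c,D)$-coordinates, to the variety
\[
V = \{(\vec b,\vec c, D)\in\ff^m\oplus\ff^m\oplus\gt{gl}_m : \vec c\perp K(\vec b,D)\},
\]
where $K(\vec b,D)=\mathrm{span}(\vec b, D\vec b,\dots,D^{m-1}\vec b)$ is the $D$-cyclic subspace generated by $\vec b$. Stratify $V$ by $r=\dim K(\vec b,D)\in\{0,1,\dots,m\}$ and let $V_r$ be the closure of the $r$-th stratum. Each $V_r$ is the image of the incidence variety
\[
I_r = \{(W,D,\vec b,\vec c) : W\in\mathrm{Gr}(r,m),\ DW\subseteq W,\ \vec b\in W,\ \vec c\in W^\perp\},
\]
a fibre bundle over $\mathrm{Gr}(r,m)$ with affine (hence irreducible) fibres; on the open dense substratum, $W$ is recovered uniquely as $K(\vec b,D)$, so the projection $I_r\to V_r$ is birational and $V_r$ is irreducible. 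The dimension count $r(m-r)+[m^2-r(m-r)]+r+(m-r)=m^2+m$ matches the prescribed value $\dim\gt g_e-\rk\gt g$, so each $V_r$ is an irreducible component. The $V_r$ are pairwise incomparable because they are distinguished by the generic value of $\dim K$, and jointly they exhaust $V$, giving exactly $m+1$ irreducible components of $\cN(e)$.

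The principal obstacle lies in the identification of the second paragraph: checking that the $m$ leading invariants $^e\!F_{n+1},\dots,^e\!F_{n+m}$ generate, after the central substitutions, the same ideal as the Krylov system. This requires a careful weight-by-weight analysis of the $\lambda$-expansion of $\vec c^{\,t}\mathrm{adj}(D-\lambda I)\vec b$ against that of $P(\lambda)Q(\lambda)$, together with a triangular change of basis from the Faddeev--LeVerrier polynomials $\{p_j(D)\}$ to the power basis $\{D^j\}$. Alternatively, for the hook partition the explicit Brown--Brundan formula of Theorem~\ref{explicit} simplifies enough that the Krylov form can be read off directly.
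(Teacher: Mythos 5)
Your proof is correct, and it follows a genuinely different route from the paper's. The paper fixes a point $v$ in the open $L=\GL_m$-orbit of the projection of $\cN(e)$ to $V^*$, passes to the slice $S\cong\gt l_v^*$ through $v$, identifies $\gt l_v$ with the centraliser of a $(2,1^{m-2})$-nilpotent, invokes Lemma~\ref{z2-deg} to conclude that the restricted invariants generate ${\mathcal S}(\gt l_v)^{\gt l_v}$, and then runs an induction on $m$, adding the two linear-subspace components $\gt l^*\oplus\ff^m$ and $\gt l^*\oplus(\ff^m)^*$ at each step. You instead realise $\cN(e)$ as the explicit Krylov-orthogonality locus $V=\{(\vec b,\vec c,D):\vec c\perp K(\vec b,D)\}$ and stratify once and for all by $r=\dim K(\vec b,D)$, with irreducibility of each closed stratum furnished by the Grassmannian incidence variety $I_r$ and the dimension count $\dim V_r=m^2+m=\dim\gt g_e-\rk\gt g$ for all $r$. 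Your argument is more elementary (no appeal to generic stabilisers, to the codimension-3 property, or to Lemma~\ref{z2-deg}), and it yields an explicit description of the components, which the induction does not.

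The one step you flagged as an obstacle — that the non-linear invariants ${^e\!}F_{n+1},\dots,{^e\!}F_{n+m}$ on $P$ cut out the same locus as $\vec c^{\,t}D^{j}\vec b$, $j=0,\dots,m-1$ — does in fact close easily, and you should write it out. In the coefficient of $\lambda^{m-j}$ of $\det(M-\lambda I)$, every term coming from $(y_0-\lambda)^n Q(\lambda)$ has $(\vec b,\vec c,D)$-degree exactly $n+j$ (after the central substitution $y_0=-\mathrm{tr}(D)/n$), while ${^e\!}F_{n+j}$ has degree $j+1$; since $n\ge 2$, the minimal-degree component therefore comes entirely from $(-1)^n\vec c^{\,t}\mathrm{adj}(D-\lambda I)\vec b$. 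Writing $\mathrm{adj}(D-\lambda I)=\sum_{i=0}^{m-1}\lambda^i N_{m-1-i}(D)$, the Faddeev--LeVerrier matrices satisfy $N_k(D)=(-1)^{m-1-k}\bigl(D^k+\text{lower powers of }D\text{ with coefficients in }\ff[\mathrm{tr}\,D,\mathrm{tr}\,D^2,\dots]\bigr)$, so the passage from $\bigl(\vec c^{\,t}N_{j-1}(D)\vec b\bigr)_j$ to $\bigl(\vec c^{\,t}D^{j-1}\vec b\bigr)_j$ is unitriangular over the ring of $\GL_m$-invariants of $D$; in particular the two systems generate the same ideal of $\ff[\vec b,\vec c,D]$. (Your phrase ``over $\ff[D]/\chi_D(D)$'' is slightly off: the coefficients are scalars built from the traces of $D$, not endomorphisms, and they are polynomial, not merely rational, so no invertibility issue arises.) With this in place the proof is complete, and a nice byproduct is that $\cN(e)$ is the union of the closures of the determinantal strata $\{\dim K(\vec b,D)=r\}\cap\{\vec c\in K(\vec b,D)^\perp\}$.
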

\begin{proof} 
Let $P\subset\gt g_e^*$ be the zero-set of 
linear invariants. Then $P$ is isomorphic to the dual
space of a Lie algebra $\gt q=\gt{gl}_m\ltimes V$, where 
$V\cong \ff^m\oplus(\ff^m)^*$ is a commutative ideal. 
Note that $\gt q$ is a quotient 
of $\gt g_e$ and $\gt g_e$ acts on $P$ via the coadjoint representation 
of $\gt q$. 

Set $L=GL_m$ and $\gt l=\Lie L$.
Identifying $\gt l^*$ with the annihilator ${\rm Ann}(V)\subset\gt q^*$ and 
$V^*$ with  ${\rm Ann}(\gt l)\subset\gt q^*$, we consider 
$\gt l^*$ and $V^*$ as subspaces of $\gt q^*$ and of 
$\gt g_e^*$. Take $H_i={^e\!}F_i$ with $i>n$. 
Then $\deg H_i=i{-}n{+}1$ and the restriction ${H_i}\vert_{P}$ 
is a bi-homogeneous polynomial in 
variables $\gt l$ and $V$ of bi-degree $(i{-}n{-}1,2)$.

The image of the projection $\cN(e)\to V^*$ coincides 
with the zero set $\cN(V)$ of ${H_{n+1}}\vert_{P}$. 
There are four $L$-orbits in $\cN(V)$: the 
open orbit,  zero, and two intermediate, 
in $(\ff^m)^*$ and $\ff^m$. Note that 
the subsets $\gt l^*\oplus(\ff^m)^*$ and
$\gt l^*\oplus\ff^m$ of $\gt g_e^*$ are defined by the equations 
$\xi_1^{1,t}=0$ ($t=0,\ldots,m-1$) and 
$\xi_1^{i,1}=0$  or $\xi_i^{1,m-1}=0$, respectively  (here $i>1$). 
Explicit formulas exhibited in Section~6 show that
both these subspaces are contained in $\cN(e)$. Since they are irreducible 
and of the right dimension, $\dim\gt g_e-(m{+}n)$, they are 
irreducible components of $\cN(e)$. 

Let $X$ be an irreducible component of $\cN(e)$ distinct from either  
$\gt l^*\oplus(\ff^m)^*$ or $\gt l^*\oplus\ff^m$.
Then the image of the projection $X\to V^*$ is either
zero or contains an open $L$-orbit ${\mathcal O}$.      
The first case is not possible because $\dim{\gt l^*}< \dim\cN(e)$. 
Thus, it remains to  deal with the irreducible components of 
the intersection $\cN(e)\cap (\gt l^*{\times}{\mathcal O})$. 
Since $G_e$ is connected, each irreducible component of $\cN(e)$ is 
$G_e$-invariant and the problem reduces to 
the intersection $\cN(e)\cap (\gt l^*{\times}\{v\})$,  where 
$v\in{\mathcal O}$. Since $V$ is a commutative ideal of $\gt q$,
it acts on the fibre 
$\gt l^*{\times}\{v\}$. This action of $V$ has a slice 
$S\subset\gt l^*{\times}\{v\}$,
isomorphic to $\gt l_v^*{\times}\{v\}$, which meets each $V$-orbit exactly 
once, see e.g. \cite[Lemma~4]{int}.  
Since both $L_v$ and $V$ are connected, 
$\cN(e)\cap (\gt l^*{\times}{\mathcal O})$ has exactly the same number 
of irreducible components as the zero-set of 
${H_i}\vert_{S}$. 

The restrictions of ${H_i}$ with $n{+}2\le i\le n{+}m$ to ${S}$
are algebraically independent, otherwise $\cN(e)$ 
would have a component of dimension 
$(\dim\gt g_e-\rk\gt g)+1$. Identifying $S$ with $\gt l_v^*$
we may consider them as 
$\gt l_v$-invariant elements of ${\mathcal S}(\gt l_v)$. 
One readily computes that 
$\gt l_v\cong (\gt{sl}_{m})_{\hat e}$, where 
$\hat e$ is a nilpotent element 
defined by the partition $(2,1^{m-2})$. 
Clearly $\deg({H_i}\vert_{S})=\deg H_i-2=n{-}1$ for $i>n$. 
Therefore we get $m{-}1=\ind\gt l_v$ polynomials 
of degrees $1,2,\ldots,m{-}1$. The sum of degrees is equal 
to $(\dim\gt l_v+\ind\gt l_v)/2$. 
There is no 
consequential difference between centralisers in 
$\gt{gl}_m$ and $\gt{sl}_m$. Therefore, according to 
Theorem~\ref{c3A}, the codimension 
of $(\gt l_v^*)_{\rm sing}$ is grater than $2$. 
Thus all conditions of Lemma~\ref{z2-deg} are satisfied. 
Hence ${H_i}\vert_{S}$ generate ${\mathcal S}(\gt l_v)^{\gt l_v}$ 
and $\cN(e)\cap S$ is isomorphic to the null-cone $\cN(\hat e)$
associated with the nilpotent element $\hat e\in\gt{gl}_m$.  

If $m=0$, then $\cN(e)$ is irreducible. 
For $m=1$ there are two irreducible components, 
since ${H_{n+1}}\vert_{P}=\xi_1^{2,0}\xi_2^{1,n{-}1}$.
Arguing by induction on $m$, 
the may assume that $\cN(\hat e)$ has $m{-}1$ components. 
Then $\cN(e)$ has $m{-}1{+}2=m{+}1$ components. 
\end{proof}

\begin{prop} Suppose that $e\in\gt{gl}_{n+m}$ is defined by the partition 
$(n,m)$ with $n\ge m$. 
Then $\cN(e)$ has $\min(n-m,m)+1$ irreducible components.
\end{prop}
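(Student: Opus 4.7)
My plan is to follow the template of the preceding proposition, adapted to the two-block setting. First I would let $P\subset\gt g_e^*$ denote the vanishing locus of the $n$ linear symmetric invariants $H_1,\dots,H_n$; by the description of the centre in Section~2 these span $\gt z^*$, so $\dim P=3m$ and $\cN(e)\subset P$. On $P$ I would introduce polynomial coordinates $A(x)=\sum_{s=0}^{m-1}a_sx^s$, $B(x)=\sum_{s=0}^{m-1}b_sx^s$ and $U(x)=\sum_{s=0}^{m-1}u_sx^s$ corresponding respectively to the dual basis elements $(\xi_1^{2,s})^*$, $(\xi_2^{1,s+n-m})^*$ and $(\xi_1^{1,s})^*$ (the functions $(\xi_2^{2,s})^*$ are eliminated via the first $m$ linear invariants). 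A direct substitution into the formula of Theorem~\ref{explicit} would show that, up to a common non-zero scalar, the remaining quadratic invariant $H_{n+j}$ restricts on $P$ to the coefficient of $x^{n+j-2}$ in the polynomial $Q(x):=x^{n-m}A(x)B(x)+U(x)^2$. Hence $\cN(e)$ is identified with $\{(A,B,U)\in\ff^m\oplus\ff^m\oplus\ff^m \mid \deg Q(x)\le n-2\}$, and equidimensionality in type $A$ (\cite[Section~5]{ppy}) guarantees that every irreducible component has dimension $2m$.

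Writing $k:=n-m$, I would then split into two cases. If $k\ge m$, then $\deg U^2\le 2m-2<n-1$, so $U^2$ does not contribute to any vanishing coefficient and the condition reduces to $\deg(AB)\le m-2$; arguing as in the proof of the previous proposition this locus has $m+1$ top-dimensional linear components $\{\deg A\le r,\deg B\le m-2-r\}$ for $r\in\{-1,0,\dots,m-1\}$, and crossing with the free $U$-space yields $m+1=\min(k,m)+1$ irreducible components. If $k<m$, the top $k$ of the equations still involve only $AB$ and cut out $X=\{\deg(AB)\le 2m-2-k\}$ in $(A,B)$-space, which decomposes into exactly $k+1$ top-dimensional linear components $V_r=\{\deg A\le r,\deg B\le 2m-2-k-r\}$ ($r=m-1-k,\dots,m-1$), each of dimension $2m-k$. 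Denoting by $\pi\colon\cN(e)\to X$ the projection, the remaining $m-k$ equations over $V_r$ express the top coefficients of $U^2$ in terms of those of $AB$; the leading one reads $u_{m-1}^2=-a_rb_{2m-2-k-r}$, while the subsequent ones successively solve for $u_{m-2},u_{m-3},\dots$ as $u_{m-1}^{-1}$ times polynomials in the $a_i,b_j$. The proof would be completed by showing $\pi^{-1}(V_r)$ is irreducible, giving the desired total of $k+1=\min(k,m)+1$ components.

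The main obstacle is precisely this irreducibility statement. On the open locus $\{u_{m-1}\ne 0\}\cap\pi^{-1}(V_r)$, the successive elimination described above identifies this open set with a Zariski-open subset of the irreducible two-sheeted cover of $V_r$ cut out by the quadric $u_{m-1}^2+a_rb_{2m-2-k-r}=0$. A direct dimension count then rules out that the degenerate locus $\{u_{m-1}=0\}\cap\pi^{-1}(V_r)$ carries any further top-dimensional component, since on it the leading equation forces $a_rb_{2m-2-k-r}=0$ and hence strictly drops the dimension. Edge effects at $r=m-1-k$ and $r=m-1$ (where one of the adjacent coefficients $a_{r-1}$ or $b_{2m-1-k-r}$ falls outside its allowed range) require some care but do not change the overall count.
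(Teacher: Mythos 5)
Your coordinate-free packaging (writing the restricted invariants as the coefficients of $Q(x)=x^{n-m}A(x)B(x)+U(x)^2$) is equivalent to the paper's variables $x_i,y_i,z_i$, and the decomposition into the $k+1$ linear candidates $V_r$, the reduction to irreducibility of $\pi^{-1}(V_r)$, and the analysis of the open locus $\{u_{m-1}\ne 0\}$ all mirror the paper's steps (your $u_{m-1}$ is, up to sign, the paper's $z_1$). In the case $n-m\ge m$ the two proofs coincide.

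The gap is in the handling of the degenerate locus $\pi^{-1}(V_r)\cap\{u_{m-1}=0\}$ when $k<m$. You assert that the forced condition $a_rb_{2m-2-k-r}=0$ "strictly drops the dimension" below $2m$, but this is far from a direct count. On $\{u_{m-1}=0\}$ the next equation $2u_{m-1}u_{m-2}+[\,AB\,]=0$ \emph{also} loses its $u$-dependence and collapses to a second pure-$AB$ condition, the one after becomes $u_{m-2}^2+[\,AB\,]=0$, and so on: the system on $\{u_{m-1}=0\}$ is a shifted replica of the original with $u_{m-2}$ playing the role of $u_{m-1}$. To conclude that the full system still cuts codimension $m-k+1$ (so dimension $\le 2m-1$), one must argue that this replica is again a null-cone of the same type — this is exactly the inductive step the paper carries out, identifying $\pi^{-1}(V_r)\cap\{z_1=0\}$ with a union of pieces $P_{a+2,b},P_{a+1,b+1},P_{a,b+2}$ of $\cN(\bar e)$ for $\bar e$ of partition $(n+2,m)$, where $\bar k=k+2$ is closer to the base case $n-m\ge m$. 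Without setting up that induction (or a genuine regular-sequence/Cohen--Macaulay argument showing $u_{m-1}$ is a non-zero-divisor on $\mathbb F[\cN(e)]$), the claim that the degenerate locus has dimension $<2m$ is unjustified: the subsequent mixed equations might become redundant on $\{u_{m-1}=0\}$ and fail to cut the expected codimension. The edge cases $r=m-1-k$, $r=m-1$ that you flag are minor by comparison; the real work is exactly this inductive descent, which your sketch does not perform.
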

\begin{proof} 
Again we replace $\gt g_e^*$ be the zero set  $P\subset\gt g_e^*$
of the linear $\gt g_e$-invariants. 
Suppose first that $m\le n-m$. Set $x_i:=\xi_1^{2,m-i}$ and $y_i:=\xi_2^{1,n-i}$ for 
$1\le i\le m$. Then $\cN(e)$ is defined by the polynomials 
$f_q=\sum\limits_{i+j=q} x_iy_j$ with $2\le q\le m+1$. Each irreducible components is 
given by  a partition $m=a+b$, where $a,b\ge 0$. 
It is a linear subspace defined by 
$x_1=\ldots=x_a=0$, $y_1=\ldots=y_b=0$. 
Hence there are exactly $m+1$ components.  

Consider now the second case, there $n-m<m$. Set $k:=n-m$. Retain the notation for 
$x_i$ and $y_i$. Set in addition $z_i:=\xi_2^{2,m-i}$. Then the 
restrictions of non-linear symmetric invariants $H_i$ to $P$ are 
given by the polynomials 
$$
\begin{array}{l}
f_q=\sum\limits_{i+j=q} x_iy_j  \ \text{ with } \  2\le q\le k+1; \\
\qquad \qquad 
 \text{ and } 
 f_p=\sum\limits_{i+j=p} x_iy_j+\sum\limits_{i+j=p-k}z_iz_j \ \text{ with } \  
k+2\le p\le m+1. \\
\end{array}
$$ 
For example, here  $f_{k+2}=x_1y_{k+1}+\ldots x_{k+1}y_1+z_1^2$ and
$f_{k+3}=x_1y_{k+2}+\ldots x_{k+2}y_1+2z_1z_2$. 
Note that variables $z_j$ appear in these equations only for $j\le m-k$.
The first equations, $f_q$, 
give rise to $k+1$ irreducible components, each of which is a linear subspace.  
Take one of these components, defined by 
$x_1=\ldots=x_a=0$, $y_1=\ldots=y_b=0$ with $a+b=k$, and 
let $P_{a,b}$ be the  intersection of this linear subspace with 
$\cN(e)$.  We are going to show that $P_{a,b}$   
is irreducible and that these components do not coincide 
for distinct partitions $k=a+b$.  

Let $P_{a,b}^{\circ}$ be a subset of $P_{a,b}$, where  
$z_1\ne 0$. Then $P_{a,b}^{\circ}$ is irreducible, because it is
defined by the equations $x_{a+1}y_{b+1}=-z_1^2$ and 
$z_j=f_{k+1+j}(\overline{x},\overline{y},z_2,\ldots,z_{j-1})/z_1$
for $2\le j\le m{-}k$ .  Note that 
$\dim P_{a,b}^{\circ}=\dim\gt  g_e-(m+n)-\dim \cN(e)$. 
On the complement 
$P_{a,b}\setminus P_{a,b}^{\circ}$ we have $z_1=0$
and equations $f_q=0$ and  $f_p=0$ reduce to the following 
\begin{equation}\label{dwa}
\begin{array}{l}
x_1=\ldots =x_a=y_1=\ldots = y_b=0, \\
x_{a+1}y_{b+1}=0,  \qquad  x_{a+1}y_{b+2}+x_{a+2}y_{b+1}=0, \\[0.5ex]
f_p=\sum\limits_{i+j=p} x_iy_j+ (z_2z_{p-k-2}+\ldots + z_{p-k-2}z_2)=0
   \enskip  \ \ \   \text{ for } \  k+4\le p\le m+1. \\
\end{array}
\end{equation}
Equations~(\ref{dwa})  are very similar to the original $f_p$'s and $f_q$'s. 
Using induction on $k-m$ and the previous case, where  
$n-m\ge m$, one can say that they define three irreducible components 
$P_{a+2,b}(\bar e)$, $P_{a+1,b+1}(\bar e)$, $P_{a,b+2}(\bar e)$
of the null-cone associated with a nilpotent element $\bar e$ 
with Jordan blocks $(n+2,m)$. One thing, 
which we should keep in mind, is that for $\bar e$  variables 
$z_2,\ldots z_{m-k-1}$ are used instead of $z_1,\ldots,z_{m-k-2}$. 
Since $\dim\cN(e)=\dim\cN(\bar e)$,  
the complement $P_{a,b}\setminus P_{a,b}^{\circ}$ 
is an irreducible subset of dimension $\dim\cN(e)-1$. In particular, 
it could not be a component of  $\cN(e)$ 
and we have proved that $P_{a,b}$ is an irreducible component.

Suppose that $a'+b'=k$ and $a'\ne a$. Then either $a'>a$ or $b'>b$. Anyway, if
$P_{a',b'}=P_{a,b}$, then $x_{a+1}y_{b+1}$ is zero on $P_{a,b}$. 
Hence $z_1$ is also zero on it. A contradiction, since we know that 
$z_1\ne 0$ defines a non-empty open subset $P_{a,b}^{\circ}\subset P_{a,b}$.
\end{proof}

There should be a combinatorial formula for the 
number of components.  Unfortunately, 
we do not have enough information even to make a 
conjecture. Apart from two cases considered in this section, little 
is known. If the partition is rectangular, i.e., all Jordan blocks are 
of the same size, then $\gt g_e$ is a Takiff Lie algebra and 
the null-cone is irreducible, see \cite[Appendix]{mus}.
A direct calculation shows that the number of irreducible components 
for the partition $(3,2,1)$ is $4$.

%
\section{Further results on the null-cone}
%

Suppose that  $\gt g\subset\gt{gl}(\VV)$ is either 
$\gt{sp}(\VV)$ or $\gt{so}(\VV)$ and $e\in\gt g$ is such that $i'=i$ for all $i$
(in terms of Lemma~\ref{restr}). 
Here we prove that each irreducible component of 
$\cN(e)$ has dimension $\dim\gt g_e-\rk\gt g$. 
Similar result was obtained in \cite[Section 5]{ppy} for all nilpotent elements in 
$\widetilde{\gt g}=\gt{gl}(\VV)$. Our proof uses the same strategy. 

For $m\in\{1,\ldots,k\}$,  partition the set $\{1,\ldots, m\}$
into pairs $(j,m-j+1)$. If $m$ is odd, then there will be a
``singular pair'' in the middle consisting of the singleton
$\{(m+1)/2\}$.  Let  $V_m$ denote the subspace of 
$\widetilde{\gt g}_e$ spanned
by all $\xi_i^{j,s}$ with $i+j=m+1$, and set 
$V:=\bigoplus_{m\ge 1} V_m$. 
 Using the basis $\{(\xi_i^{j,s})^*\}$ of $\widetilde{\gt g}_e^*$ dual to
the basis $\{\xi_i^{j,s}\}$, we shall regard the dual spaces $V_i^*$
and $V^*$ as subspaces of $\widetilde{\gt g}_e^*$.

Since $i'=i$ for all $i$, the restriction 
of the $\gt g$-invariant form on $\VV$ to each 
$\VV_i$ is non-degenerate. Hence the partition  
into pairs $(j,m-j+1)$ can be pushed down to $\gt g_e$. 
Each $V_m$ is preserved by $\sigma$, where $\sigma$ is an involution 
of $\widetilde{\gt g}$ with $\gt g=\widetilde{\gt g}^\sigma$.
Let  $\widetilde{\gt g}=\gt g\oplus\widetilde{\gt g}_1$
be the corresponding symmetric decomposition. 
Let us identify $\gt g_e^*$ with the annihilator 
of $\widetilde{\gt g}_{1,e}$ in $\widetilde{\gt g}_e^*$.
Then the expressions $V_{\gt g,m}^*:=V_m^* \cap \gt g_e^*$
make sense and $V_{\gt g,m}^*=(V_m^*)^\sigma$, similarly set
$V_{\gt g}^*:=V^*\cap\gt g^*$. Note also 
that 
$$
\bar{\gt g}:=\gt g\cap \gt{gl}\big(\VV[1]{\,\oplus}\cdots{\oplus}\VV[k{-}1]\big)
$$
is a semisimple subalgebra of $\gt g$, either 
$\gt{so}\big(\VV[1]{\,\oplus}\cdots{\oplus}\VV[k{-}1]\big)$ or 
 $\gt{sp}\big(\VV[1]{\,\oplus}\cdots{\oplus}\VV[k{-}1]\big)$, depending 
on $\gt g$. 
Likewise $\gt g_k:=\gt g\cap\gt{gl}(\VV_k)$ is either 
$\gt{so}(\VV_k)$ or $\gt{sp}(\VV_k)$. 

Set $n:=\dim\VV$.  Let 
$\Delta_i\in\mathbb F[\widetilde{\gt g}]^{\widetilde{\gt g}}$ 
(with $1\le i\le n$) be the coefficients of the characteristic polynomial. 
Unlike Section~\ref{forms}, here we  consider  $\Delta_i$ as elements
of ${\mathcal S}(\widetilde{\gt g})$. Set $F_i:={\Delta_{2i}}\vert_{\gt g^*}$ 
for all $i$ in the range  $1\le i\le\rk\gt g$.
Note that all $\Delta_i$ with odd $i$ are zero on $\gt g^*$. 
As was proved in \cite[Theorem~4.2 and Lemma~4.5]{ppy}, the polynomials ${^e\!}F_i$ 
are algebraically independent and in the symplectic case they 
 generate ${\mathcal S}(\gt g_e)^{\gt g_e}$. 
Let $\cN_F(e)\subset\gt g_e^*$ be the zero set of the polynomials 
${^e\!}F_i$. 

\begin{thm}
Suppose that $\gt g$ and $e\in\gt g$ satisfy the assumptions of this section. 
Then 
there exists a linear subspace 
$W_{\gt g}=\bigoplus_{m\ge 1} W_{\gt g,m}$ in 
 $V_{\gt g}^*$ of dimension $\rk\gt g$  such that 
$W_{\gt g,m}\subset V_{\gt g,m}^*$ for all $m$ and
$W_{\gt g}\cap\cN_F(e)=\{0\}$.
\end{thm}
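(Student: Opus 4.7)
The strategy mirrors that of \cite[Section~5]{ppy} for $\widetilde{\gt g} = \gt{gl}(\VV)$: I would construct $W_{\gt g}$ explicitly and inductively on $k$, the number of Jordan blocks. The hypothesis $i' = i$ for all $i$ simplifies the geometry considerably, since each cyclic subspace $\VV[i]$ now carries a non-degenerate restriction of the form and $\VV = \bigoplus_{i} \VV[i]$ is an orthogonal direct sum compatible with the block structure visible in the decomposition $V = \bigoplus_m V_m$.

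For the base case $k = 1$, the element $e$ is regular in $\gt g$, so $\gt g_e$ is abelian of dimension $\rk \gt g$, the ${^e\!}F_i$ restrict to a system of algebraically independent generators of $\mathbb F[\gt g_e^*]$, and consequently $\cN_F(e) = \{0\}$. One simply takes $W_{\gt g} := \gt g_e^* = V_{\gt g}^*$.

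For the inductive step, I would peel off the last block $\VV[k]$ and apply the induction hypothesis to $\bar{\gt g}$ and $\bar e := e|_{\VV^{k-1}}$, obtaining $W_{\bar{\gt g}} \subset V_{\bar{\gt g}}^*$ of dimension $\rk \bar{\gt g}$ with $W_{\bar{\gt g}} \cap \cN_F(\bar e) = \{0\}$. Multiplicativity of the characteristic polynomial gives, for $x = \bar x + x_k \in \bar{\gt g} \oplus \gt g_k$, the decomposition $F_i^{\gt g}(x) = \sum_{p + q = i} F_p^{\bar{\gt g}}(\bar x)\, F_q^{\gt g_k}(x_k)$; passing to principal parts along the Slodowy slice yields an analogous triangular decomposition for ${^e\!}F_i$ in terms of the invariants attached to $\bar e$ and to the regular nilpotent of $\gt g_k$. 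I would then enlarge $W_{\bar{\gt g}}$ to $W_{\gt g}$ by adjoining (i) the $\rk \gt g_k$-dimensional subspace arising from the base case applied to $\gt g_k$, and (ii) a small number of cross-term covectors of the shape $(\xi_i^{k,s})^* + \varepsilon(i,k,s)(\xi_k^{i,s'})^*$ distributed into the appropriate $V_{\gt g,m}^*$ so that the total dimension equals $\rk \gt g$.

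The core of the argument, and the main obstacle, is the verification that $W_{\gt g} \cap \cN_F(e) = \{0\}$. This reduces to exhibiting a triangular pattern among the leading monomials of the ${^e\!}F_i$ restricted to $W_{\gt g}$: the covectors added in step (ii) must be chosen so that each newly introduced coordinate first appears in the leading term of exactly one invariant of strictly higher degree than any of the invariants produced by the induction hypothesis. Once such a pattern is in place, any common zero of $\{{^e\!}F_i\}$ in $W_{\gt g}$ can be solved variable by variable: the topmost invariant forces the newest coordinate to vanish, reducing the system to the one coming from $\bar{\gt g}$ plus $\gt g_k$, which has only the trivial zero by induction and the base case. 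The sign factors $\varepsilon(i,j,s)$ built into the symmetric basis of $\gt g_e$ enter these leading-term computations nontrivially, and confirming that they do not conspire to kill the leading monomial is the technical heart of the proof.
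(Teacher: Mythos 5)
Your overall shape — induction on the number $k$ of Jordan blocks, peel off $\VV[k]$, apply the inductive hypothesis to $\bar e$ on $\VV[1]\oplus\cdots\oplus\VV[k{-}1]$, and then find a complementary piece to bring the dimension up to $\rk\gt g$ — matches the paper's. The base case is also handled the same way. But the way you propose to carry out the inductive step diverges from the paper, and the divergence is where the real difficulties lie.

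The key technical input in the paper's argument is a pair of lemmas from \cite{ppy} that you do not invoke and cannot replace by the tools you name. The multiplicativity formula $F_i(x)=\sum_{p+q=i}F_p^{\bar{\gt g}}(\bar x)\,F_q^{\gt g_k}(x_k)$ is valid only for block-diagonal $x$, i.e.\ on $\bar{\gt g}^*\oplus\gt g_k^*$ inside $\gt g_e^*$. But the space $V_k^*$ consists precisely of the \emph{off-diagonal} coordinates $(\xi_{k-i+1}^{i,s})^*$ with $i+(k{-}i{+}1)=k{+}1$, so the factorisation you want to exploit simply does not hold on the slice where the new part of $W_{\gt g}$ must live. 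Moreover, the passage to principal parts ${^e\!}F_i$ does not commute with restriction to a subalgebra or with a product of characteristic polynomials, so the claimed ``triangular decomposition'' of the ${^e\!}F_i$ is not a formal consequence of anything you have written down; it is exactly the content of \cite[Lemma~5.1]{ppy}, which says that ${^e\!}\Delta_{n-q}\vert_{V^*}$ (for $0\le q\le d_k{+}1$) depends only on the coordinates in $V_k$, and of \cite[Lemma~5.2]{ppy}, which identifies the common zero locus $X\subset V_k^*$ of those restricted invariants as an explicit union of linear subspaces $X_{\bar s}$ indexed by compositions $\bar s$ of $d_k{+}1$, all of codimension $d_k{+}1$. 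Without these descriptions, your ``leading-monomial triangularity'' is a heuristic with no supporting computation, and the invariants involved are genuinely nonlinear, so the argument ``the topmost invariant forces the newest coordinate to vanish'' is not available.

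You also place the new part of $W_{\gt g}$ in the wrong spot. The paper takes $W_{\gt g,k}\subset V_{\gt g,k}^*$ entirely inside the \emph{antidiagonal} block $V_k$, and shows using the ppy lemmas that $\cN_F(e)\cap\bigl(\overline{W}_{\bar{\gt g}}\oplus V_{\gt g,k}^*\bigr)$ decouples as the Cartesian product $\bigl(\cN_F(\bar e)\cap\overline{W}_{\bar{\gt g}}\bigr)\times(X\cap\gt g^*)$, after which it only remains to count the codimension of $X\cap\gt g^*$ inside $V_{\gt g,k}^*$ (the $\sigma$-fixed coordinates merge in pairs, and for $k$ even the middle diagonal $\xi_\ell^{\ell,t}$ with $2\ell=k$ drops some coordinates). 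Your proposal instead tries to adjoin the $\rk\gt g_k$-dimensional base-case subspace coming from $\gt g_k$, which consists of coordinates $(\xi_k^{k,s})^*$ — but those sit in $V_{2k-1}^*$, not $V_k^*$ — plus assorted cross-term covectors scattered among the $V_m^*$. That is formally permitted by the theorem statement, but there is no product-decoupling of $\cN_F(e)$ on such a slice, so you would need a completely new analysis of how the invariants restrict; you have not supplied one, and it is not clear one exists. The gap, then, is concrete: you need (and do not have) the structure theory of the restricted invariants $\hat\varphi_{n-q}$ on $V_k^*$ from \cite[Lemmas~5.1--5.2]{ppy}, and the decoupling of $\cN_F(e)$ along the slice $\overline{W}_{\bar{\gt g}}\oplus V_{\gt g,k}^*$ that makes the dimension count possible.
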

\begin{proof}
We argue by induction on $k$. If $k=1$, then $e$ is a regular nilpotent element, 
all ${^e\!}F_i$ are linear functions and they form a basis of $\gt g_e$. Hence 
$\cN_F(e)=\{0\}$ and there is nothing to prove. Assume that
$k>1$ and for all
$k'<k$ the statement is true. 

Regard
the dual spaces $\bar{\g}^*$ and $\g_k^*$ as subspaces of $\g^*$.
Note that $e=e_k+\bar{e}$ where $e_k$ and $\bar{e}$ are the
restrictions of $e$ to  $\VV[k]$ and
$\VV[1]{\,\oplus}\cdots{\oplus}\VV[k{-}1]$, respectively. Clearly, $e_k$
is a regular nilpotent element in $\g_k$ and
$\bar{e}\in\bar{\g}$ is a nilpotent
element with Jordan blocks of sizes $d_1+1,\ldots, d_{k-1}+1$. 
Note that $V_{\gt g,m}^*\subset(\bar{\gt g}_{\bar e})^*$ for $m<k$. 

The restriction of ${^e\!}F_i$  (with $1\le 2i\le n-d_k-1$) 
to $(\bar{\gt g}_{\bar e})^*$ can be obtained 
as follows:  first restrict $\Delta_{2i}$ to the dual 
of $\gt{gl}\big(\VV[1]{\,\oplus}\cdots{\oplus}\VV[k{-}1]\big)$, getting again 
a coefficient of the characteristic polynomial, then restrict it further to  
$\bar{\gt g}$ and apply the ${^{\bar e}\!}F$-construction. 
Hence by the inductive hypothesis there is a subspace 
$\overline{W}_{\bar{\gt g}}=\,\bigoplus_{m=1}^{k-1}\, W_{\gt g,m}$
with $W_{\gt g,m}\subset V^*_{\gt g,m}$
such that $\dim\overline{W}_{\bar{\g}}=\rk\bar{\gt g}$ and
$\overline{W}_{\bar{\g}}\cap\cN_F(\bar{e})=\{0\}$.

Consider the remaining invariants. 
For $0\le q\le d_k{+}1$ set $\hat\varphi_{n-q}:={{^e\!}\Delta_{n-q}}\vert_{V^*}$. 
By \cite[Lemma~5.1]{ppy}, each $\hat\varphi_{n-q}$ is an element of 
${\mathcal S}(V_k)$. 
Let $X\subset V^*_k$ be the zero locus of the $\hat\varphi_\ell$
with $n\ge\ell \ge n{-}d_k{-}1$. 
Note that 
$$
\cN_F(e)\cap (\overline{W}_{\bar{\g}}\oplus V_{\gt g,k}^*)=
  (\cN_F(\bar e)\cap \overline{W}_{\bar{\g}})\times  
   (X\cap \gt g^*)=X\cap \gt g^*.
$$
Thereby it remains to show that  the intersection    
$X\cap \gt g^*$ has no irreducible components of dimension bigger than
$\dim V_{\gt g,k}-\rk\gt g+\rk\bar{\gt g}$. 

The description of $X$ in terms of tuples 
$\bar s:=(s_1,\ldots,s_k)$ with  $s_i\in \mathbb Z_{\ge 0}$
is given in \cite[Lemma~5.2]{ppy}. 
Denote by
$X_{\bar s}$ the subspace of $V_k^*$ consisting of all 
$\gamma\in V_k^*$ such that $\,\xi_{k-i+1}^{i,d_i-t}(\gamma)=0$ for 
$0\le t<s_i$. The variety $X$ is a union of linear subspaces
$X\,=\,\bigcup_{|\bar s|=d_k+1}\,X_{\bar s}$, where 
$|\bar s|:=s_1{+}s_2{+}\ldots{+}s_k$.  In particular, 
all irreducible components of $X$ have dimension equal to
$\dim V_k-(d_k+1)$. Then restricted to $\gt g^*$ not all of the
linear equations $\,\xi_{k-i+1}^{i,d_i-t}=0$ stay independent, 
$\,\xi_{k-i+1}^{i,d_i-t}$ becomes proportional to 
$\,\xi_{i}^{k-1+1,d_i-t}$, and if $k$ is even, 
then $\xi_{\ell}^{\ell,t}$ with $2\ell=k$ and  even $t$ vanishes 
on $\gt g^*$ completely. Summing  up, 
each component of $X\cap\gt g^*$ has dimension greater than or equal to  
$\dim V_{\gt  g,k}-r$, where $r=(d_k+1)/2$ if $d_k$ is odd, 
$r=d_k/2$ if $d_k$ is even and $k$ is odd, and finally 
if both $d_k$ and $k$ are even, then $r=(d_k+1)/2$. 
In any case, $r=\rk\gt g-\rk\bar{\gt g}$. 
Therefore we can find a subspace $W_{\gt g,k}\subset V_{\gt g,k}^*$ such 
that $X\cap W_{\gt g,k}=0$ and $\dim W_{\gt g,k}=\rk\gt g-\dim\overline{W}_{\bar{\g}}$.
The required subspace $W_{\g}$ is equal to $\overline{W}_{\bar{\g}}\oplus W_{\gt g,k}$.
\end{proof}

Each component of $\cN_F(e)$ is a conical Zariski closed 
subset of $\gt g_e^*$ and we found a subspace $W_{\gt g}\subset\gt g_e^*$
of dimension
$\rk\gt g$ such that $\cN_F(e)\cap W_{\gt g}=\{0\}$. Hence  

\begin{cl}\label{null-cone31}
All irreducible components of $\cN_F(e)$ have
codimension $\rk\gt g$ in $\g_e^*$ and 
${{^e\!}F}_1,\ldots,{{^e\!}F}_{\rk \gt g}$ is a regular sequence in 
$\mathbb \cS(\gt g_e)$.
\end{cl}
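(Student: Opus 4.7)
The plan is to deduce the corollary in a few lines from the theorem just proved, combining one elementary piece of projective geometry with one standard piece of commutative algebra; no further intrinsic information about $\cN_F(e)$ is needed.

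First I would establish the upper bound on codimension by Krull's height theorem: the ideal $({^e\!}F_1,\ldots,{^e\!}F_{\rk\gt g})\lhd\cS(\gt g_e)$ is generated by $\rk\gt g$ elements, so every minimal prime over it has height at most $\rk\gt g$; equivalently, every irreducible component of $\cN_F(e)$ has codimension at most $\rk\gt g$ in $\gt g_e^*$.

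For the matching lower bound I would use the subspace $W_{\gt g}$ produced in the theorem. The key observation is that each ${^e\!}F_i$ is, by its very construction (the lowest-degree component of a restriction of a homogeneous invariant), a homogeneous element of $\cS(\gt g_e)$, so $\cN_F(e)$ is a closed cone in $\gt g_e^*$, and hence so is every one of its irreducible components $Y$. For such a component, $Y\cap W_{\gt g}\subseteq\cN_F(e)\cap W_{\gt g}=\{0\}$, so after projectivising, the subvarieties $\mathbb P(Y)$ and $\mathbb P(W_{\gt g})$ are disjoint in $\mathbb P(\gt g_e^*)$. The projective dimension theorem then forces
\[
(\dim Y-1)+(\dim W_{\gt g}-1)\;<\;\dim\gt g_e-1,
\]
i.e.\ $\dim Y\le\dim\gt g_e-\rk\gt g$. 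Combined with the Krull bound, this gives codimension exactly $\rk\gt g$ for every component.

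Finally, the regular-sequence assertion comes for free from the Cohen--Macaulay property of the polynomial algebra $\cS(\gt g_e)$: in a Cohen--Macaulay ring, a sequence of homogeneous elements is regular iff the height of the ideal it generates equals the number of elements, which is precisely what we have just shown. I do not expect a serious obstacle here; the only thing to double-check is that the ${^e\!}F_i$ really are homogeneous (this is built into the definition of ${^e\!\!}(-)$), so that $\cN_F(e)$ is honestly a cone and the projective dimension argument applies.
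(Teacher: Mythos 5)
Your argument is correct and is essentially the paper's own: the paper deduces the corollary from the fact that every component of $\cN_F(e)$ is a conical closed subset meeting the $\rk\gt g$-dimensional subspace $W_{\gt g}$ only at $0$, which is exactly your projectivised dimension count, combined (implicitly) with Krull's height bound and the Cohen--Macaulay property of $\cS(\gt g_e)$ for the regular-sequence claim. You have merely spelled out the standard steps that the paper compresses into the word ``Hence''.
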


Clearly $\cN(e)$ is a subset of $ \cN_F(e)$ and 
each irreducible component of $\cN(e)$ has dimension grater 
or equal than $\dim\gt g_e-\rk\gt g$. Therefore we get the following. 

\begin{cl}\label{null-cone3}
All irreducible components of the null-cone $\cN(e)\subset\gt g_e^*$ have
codimension $\rk\gt g$ in $\g_e^*$.
\end{cl}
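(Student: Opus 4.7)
The plan is to sandwich the dimension of each irreducible component of $\cN(e)$ between two values that coincide, using Corollary~\ref{null-cone31} for the upper bound and a general fact about quotient morphisms for the lower bound.

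First I would observe that $\cN(e) \subset \cN_F(e)$: indeed, by the lemma of Section~\ref{str} each ${^e\!F_i}$ lies in ${\mathcal S}(\gt g_e)^{\gt g_e}$, and hence vanishes on the null-cone, which by definition is the zero set of all symmetric $G_e$-invariants of positive degree. Combined with Corollary~\ref{null-cone31}, which guarantees that every irreducible component of $\cN_F(e)$ has codimension exactly $\rk\gt g$ in $\gt g_e^*$, this yields that each irreducible component of $\cN(e)$ has codimension \emph{at least} $\rk\gt g$ in $\gt g_e^*$.

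For the reverse inequality I would invoke Elashvili's conjecture for the classical case (proved in \cite{fan}), which gives $\ind\gt g_e = \rk\gt g$. By Rosenlicht's theorem, the transcendence degree of the field of rational $G_e$-invariants on $\gt g_e^*$ equals $\ind\gt g_e = \rk\gt g$, so $\dim (\gt g_e^*/\!\!/G_e) \le \rk\gt g$. A standard dimension count for fibres of a dominant morphism then forces each irreducible component of any fibre of the quotient morphism — in particular of $\cN(e)$ — to have dimension \emph{at least} $\dim\gt g_e - \rk\gt g$. Combining the two bounds gives equality.

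The main point to be careful about is that, in the orthogonal setting under consideration, we do \emph{not} know a priori that the ${^e\!F_i}$ generate all of ${\mathcal S}(\gt g_e)^{G_e}$ (Pfaffian-type invariants may be needed), so $\cN(e)$ could be strictly smaller than $\cN_F(e)$. This is exactly why the argument must go through the inclusion $\cN(e) \subset \cN_F(e)$ plus the general lower bound from $\ind\gt g_e = \rk\gt g$, rather than via equality of the two zero loci. All the real work has already been carried out in the preceding theorem, so this corollary reduces to assembling the pieces.
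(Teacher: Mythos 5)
Your proposal is correct and follows essentially the same route as the paper: the inclusion $\cN(e)\subset\cN_F(e)$ together with Corollary~\ref{null-cone31} gives the upper bound on dimensions, and the standard fibre-dimension lower bound $\dim\gt g_e-\rk\gt g$ (via $\ind\gt g_e=\rk\gt g$) gives the other inequality, which is exactly the two-line argument preceding the corollary in the paper. Your caveat about not knowing that the ${^e\!}F_i$ generate all invariants in the orthogonal case is well placed; just note that since finite generation of ${\mathcal S}(\gt g_e)^{G_e}$ is not established there, one should phrase the lower bound via finitely many invariants cutting out $\cN(e)$ set-theoretically (their transcendence degree being at most $\ind\gt g_e$ by Rosenlicht) rather than via the quotient variety $\gt g_e^*/\!\!/G_e$ itself.
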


Let $X\subset\mathbb A^d_{\mathbb F}$ be a Zariski
closed set and let $x=(x_1,\ldots,x_d)$ be a point of $X$. Let $I$
denote the defining ideal of $X$ in the coordinate algebra
${\mathcal A}=\mathbb F[X_1,\ldots, X_d]$ of 
$\mathbb A^d_{\mathbb F}$. 
Each nonzero $f\in\mathcal A$ can be expressed as a polynomial
in $X_1-x_1,\ldots,X_d-x_d$, say $f=f_k+f_{k+1}+\cdots$, where $f_i$
is a homogeneous polynomial of degree $i$ in 
$X_1-x_1,\ldots, X_d-x_d$ and $f_k\ne 0$. We set ${\rm in}_x(f):=f_k$ 
and denote by
${\rm in}_x(I)$ the linear span of all ${\rm in}_x(f)$ with 
$f\in I\setminus\{0\}$. This is an ideal of $\mathcal A$, and the affine
scheme $TC_x(X):=\,{\rm Spec}\,{\mathcal A}/{\rm in}_x(I)$ is called
the {\it tangent cone} to $X$ at $x$.

If $\gt g$ is of type $D$, then 
$n=2q$ and $F_{p}=P^2$, where $P$ is the Pfaffian.
Set $H_i:={^e\!}F_i$ for all $i$ in types $B$, $C$,
and for $2i<n$ in type $D$; and in type $D$ set 
in addition $H_q:={^e\!}P$.
In exactly the same way as in \cite[Subsection 5.4]{ppy},
one can obtain another  corollary.   

\begin{cl}\label{nilcone}
Let $\cN$ be the nilpotent cone of $\gt g$ and
$F_i$ as above. Suppose that $\gt g$ and a 
nilpotent element $e\in\gt g$ satisfies the assumptions 
of this section. 
Set  $r=\dim\g_e$. Then 
$TC_e({\cN(\gt g)})
\cong\,{\mathbb A}^{\dim\gt g-r}_{\mathbb F}\!\times\,{\rm Spec}\,\cS(\g_e)/({H_1},
  \ldots,{H_{\rk\gt g}})$ 
as affine schemes.
\end{cl}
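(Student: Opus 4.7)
The plan is to transplant the argument of \cite[Subsection~5.4]{ppy} from type $A$ to our setting, using the Slodowy slice $\mathbb S_e = e+\gt g_f$ to trade the tangent cone of $\cN(\gt g)$ at $e$ for a tangent cone on $\mathbb S_e\cong\gt g_e^*$, then invoking Corollary~\ref{null-cone31} in the crucial last step.

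Recall from $\gt{sl}_2$-theory that $\gt g=[e,\gt g]\oplus\gt g_f$, so the differential at $(1_G,e)$ of the adjoint-action map $\mu\colon G\times\mathbb S_e\to\gt g$, $(g,x)\mapsto\Ad(g)x$, is surjective. Since $\cN(\gt g)$ is $G$-stable and closed, its preimage in $G\times\mathbb S_e$ is $G\times(\cN(\gt g)\cap\mathbb S_e)$, and the restriction of $\mu$ to this preimage is smooth at $(1_G,e)$. The orbit $Ge$ is smooth at $e$ of dimension $\dim\gt g-r$, so on tangent cones this gives the product decomposition
$$TC_e(\cN(\gt g))\,\cong\,TC_e(Ge)\times TC_e(\cN(\gt g)\cap\mathbb S_e)\,\cong\,{\mathbb A}^{\dim\gt g-r}_{\mathbb F}\times TC_e(\cN(\gt g)\cap\mathbb S_e).$$
It remains to identify the second factor with $\mathrm{Spec}\,\cS(\gt g_e)/(H_1,\ldots,H_{\rk\gt g})$.

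Under the identification $\mathbb S_e\cong\gt g_e^*$ carrying $e$ to the origin, the ideal $I\subset\cS(\gt g_e)$ of the intersection $X:=\cN(\gt g)\cap\mathbb S_e$ is generated by the restrictions $F_i|_{\mathbb S_e}$ for $1\le i\le\rk\gt g$, with $P|_{\mathbb S_e}$ replacing $F_{n/2}|_{\mathbb S_e}$ in type $D$ (since $\cN(\gt g)$ is reduced and $F_{n/2}=P^2$). By the very definition of ${^e\!}F$, the component of minimal degree of each such generator at the origin is $H_i$; hence $(H_1,\ldots,H_{\rk\gt g})$ is contained in the initial ideal $\mathrm{in}_0(I)$ that defines the tangent cone $TC_e(X)$.

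The principal obstacle is upgrading this inclusion to an equality of ideals. Here Corollary~\ref{null-cone31} applies: it asserts that $H_1,\ldots,H_{\rk\gt g}$ is a regular sequence in $\cS(\gt g_e)$. By the standard lifting lemma (see e.g.\ Bourbaki, \emph{Commutative Algebra}, Ch.~III, \S\,2), if the initial forms of elements $f_1,\ldots,f_n$ of a filtered local ring constitute a regular sequence in the associated graded ring, then $f_1,\ldots,f_n$ themselves form a regular sequence, and the associated graded of the quotient identifies canonically with the quotient of the associated graded by those initial forms. Applied to the chosen generators of $I$ (with $P|_{\mathbb S_e}$ in type $D$), this yields the scheme-theoretic identification
$$TC_e(X)\,=\,\mathrm{Spec}\,\cS(\gt g_e)/(H_1,\ldots,H_{\rk\gt g}),$$
which, combined with the product decomposition above, completes the proof.
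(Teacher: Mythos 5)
Your argument is correct and is essentially the paper's own proof, which simply defers to the type~$A$ argument of \cite[Subsection~5.4]{ppy}: transversality of the Slodowy slice, Kostant's generators of the ideal of $\cN(\gt g)$ (with the Pfaffian replacing $P^2$ in type $D$), identification of the initial forms with the $H_i$, and Corollary~\ref{null-cone31} supplying the regular sequence that forces the initial ideal to be $(H_1,\ldots,H_{\rk\gt g})$. Two points worth making explicit: replace $G$ by a locally closed slice $U\ni 1$ transversal to $G_e$, so that $U\times\mathbb S_e\to\gt g$ is \'etale at $(1,e)$ and the product decomposition of tangent cones follows directly (your smoothness formulation leaves superfluous affine factors on both sides that cannot simply be cancelled), and note that in type $D$ the regular-sequence property stated in Corollary~\ref{null-cone31} for $H_1,\ldots,H_{q-1},({}^{e\!}P)^2$ passes to $H_1,\ldots,H_{q-1},{}^{e\!}P$ because the zero loci, and hence the codimensions, coincide.
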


\begin{qn}
Suppose that $\gt g=\gt{so}(\VV)$ and $i'=i$ for 
a nilpotent element $e\in\gt g$.
Is it true that $H_1,\ldots,H_{\rk\gt g}$ generate 
the whole algebra of symmetric $\gt g_e$-invariants?
The first step is to show that generic fibres of the 
morphism $\gt g_e^*\to{\rm Spec}(\ff[H_1,\ldots,H_{\rk\gt g}])$
are connected. Then the subalgebra 
$\ff[H_1,\ldots,H_{\rk\gt g}]$ will be algebraically closed in
${\mathcal S}(\gt g_e)$, see Theorem~\ref{fibre} in 
the Appendix. Since it has the right transcendence degree, 
$\ind\gt g_e$, it will be shown that at least   
${\mathcal S}(\gt g_e)^{\gt g_e}\subset{\rm Quot}\,\ff[H_1,\ldots,H_{\rk\gt g}]$.
\end{qn}

Related, but a slightly different question, 
is whether ${\mathcal S}(\gt g_e)^{\gt g_e}$ 
is free for the nilpotent elements considered above
(in the orthogonal case, the symplectic case is covered 
by \cite{ppy}). According to  
Kac's generalisation of Popov's conjecture, see footnote~1 on page~192 in 
\cite{some},  it should be. 

\appendix   
\section{When generic fibres of a morphism are connected}
\label{vin-sloi}
\setcounter{equation}{0}

Let $\bbk$ be an algebraically closed 
field of characteristic zero. Suppose that we have 
a dominant morphism $\varphi:\enskip X\to Y$ of 
irreducible affine varieties. Regard 
$\bbk[Y]$ as a subalgebra of $\bbk[X]$ and
$\bbk(Y)$ is a subfield of $\bbk(X)$. 
Let us say that $\bbk[Y]$ is algebraically closed in $\bbk[X]$, if each 
element of $\bbk[X]$, which is algebraic over $\bbk(Y)$, 
lies in $\bkk(Y)$.
The following theorem is probably  very well known. 
The proof given below is due to E.B. Vinberg, 
who explained it to his students at the  Moscow University
some twenty years ago.

\begin{thm}\label{fibre}
Generic fibres of 
$\varphi$ are connected if and only if\  $\ \bbk[Y]$ is algebraically closed in $\bbk[X]$. 
\end{thm}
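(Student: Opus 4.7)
The plan is to prove both implications separately.

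For $(\Rightarrow)$: suppose generic fibres are connected. Take $f \in \bbk[X]$ algebraic over $\bbk(Y)$, with minimal polynomial $P(T) \in \bbk(Y)[T]$ of degree $n$. On a dense open $U \subset Y$ where the coefficients of $P$ are regular, $X_y$ is connected, and $P_y(T) \in \bbk[T]$ has $n$ distinct roots (using separability in characteristic zero), the regular function $f|_{X_y}$ takes values in the finite root-set of $P_y$; connectedness forces it to be constant. Consider the morphism $\psi\colon X \to Y \times \mathbb{A}^1_\bbk$, $x \mapsto (\varphi(x), f(x))$. The closure $Z$ of $\psi(X)$ is irreducible and meets each slice $\{y\} \times \mathbb{A}^1$ (for $y \in U$) in a single point, so $Z \to Y$ is birational, and $Z$ is the graph of a rational function $g$ on $Y$; thus $f = g \circ \varphi$ lies in $\bbk(Y) \subset \bbk(X)$.

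For $(\Leftarrow)$: I first strengthen the hypothesis to the statement that $\bbk(Y)$ is algebraically closed in $\bbk(X)$. Given $\alpha \in \bbk(X)$ algebraic over $\bbk(Y)$, the key lemma is that some nonzero $s \in \bbk[Y]$ satisfies $s\alpha \in \bbk[X]$; granting this, the hypothesis forces $s\alpha \in \bbk(Y)$, hence $\alpha \in \bbk(Y)$. To produce $s$, write $\alpha = p/q$ with $p, q \in \bbk[X]$, $q \neq 0$. Since $\alpha$ is algebraic over $\bbk(Y)$, restricting its minimal polynomial to a generic fibre $X_y$ yields an algebraic equation over $\bbk$ for $\alpha|_{X_y}$; a rational function satisfying a polynomial with scalar coefficients cannot have poles, so $\alpha|_{X_y}$ is regular. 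Hence the pole locus of $\alpha$ on $X$ does not dominate $Y$, and any $s \in \bbk[Y]$ vanishing on $\varphi(\text{pole locus})$ to sufficient order clears the denominator.

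With $\bbk(Y)$ algebraically closed in $\bbk(X)$ established, I invoke the classical fact (cited at the end of Section~7 from Shafarevich, Chap.~II, Sect.~6.1) that in characteristic zero this is equivalent to geometric irreducibility of the generic fibre; combined with generic flatness and the constructibility of connectedness, this yields connectedness of $X_y$ for $y$ in a dense open of $Y$.

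The principal obstacle lies in the ``clears the denominator'' step when $X$ is not normal: a rational function with the expected pole behavior along prime divisors may still fail to lie in $\bbk[X]$, only in $\bbk[X^{\mathrm{norm}}]$. One handles this by choosing $s$ also to vanish (to sufficient order) on the image of the non-normal locus of $X$ in $Y$, exploiting that the conductor of $\bbk[X]$ in $\bbk[X^{\mathrm{norm}}]$ has nonzero intersection with $\bbk[Y]$ after such a choice; alternatively, one can work directly with the monic integral relation satisfied by $s_0\alpha$ over $\bbk[Y]$ obtained from the minimal polynomial by clearing denominators.
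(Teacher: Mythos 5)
Your forward direction ($\Rightarrow$) is essentially the same as the paper's (the paper argues in the contrapositive) and is carried out slightly more carefully: the observation that $f|_{X_y}$ is constant on a connected fibre and the graph argument showing that this constancy pushes $f$ down to a rational function on $Y$ are both fine.

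The converse direction, however, is not salvageable: the ``strengthening'' you open with is false, and the paper's own Appendix example refutes it. You propose to derive that $\bbk(Y)$ is algebraically closed in $\bbk(X)$ (equivalently, that generic fibres are irreducible) from the hypothesis that $\bbk[Y]$ is algebraically closed in $\bbk[X]$ (equivalently, by the very theorem being proved, that generic fibres are connected). But for $X = \{x^2 = y^2 z\} \subset \mathbb{A}^3$, $Y = \mathbb{A}^1$, $\varphi(x,y,z)=z$ --- the example given just after Remark~\ref{fconv} --- the generic fibre is two intersecting lines: connected but reducible. Here $\alpha = x/y \in \bbk(X)$ is algebraic over $\bbk(Y)$ (it satisfies $\alpha^2 = z$) yet $\alpha \notin \bbk(Y)$, so $\bbk(Y)$ is \emph{not} algebraically closed in $\bbk(X)$, while $\bbk[Y]$ \emph{is} algebraically closed in $\bbk[X]$. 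Your ``key lemma'' that one can find $s\in\bbk[Y]$ with $s\alpha\in\bbk[X]$ fails for exactly this $\alpha$: its indeterminacy locus $\{x=y=0\}$ coincides with the non-normal locus and dominates $Y$, so no nonzero $s\in\bbk[Y]$ can vanish on its image, and the workaround you sketch in the last paragraph is therefore empty. The chain ``$\bbk[Y]$ alg.\ closed $\Rightarrow$ $\bbk(Y)$ alg.\ closed $\Rightarrow$ irreducible generic fibres $\Rightarrow$ connected generic fibres'' proves strictly more than the theorem asserts and its first arrow is simply not a theorem.

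The paper's proof of ($\Leftarrow$) sidesteps this entirely. It works directly with the fibre ring: after cutting down to a finitely generated subfield $\mK\subset\bbk$ of definition and choosing a $\mK$-generic point $y$, one has $\bbk[\varphi^{-1}(y)] = \mK[Y]^{-1}\mK[X]\otimes_{\mK(Y)}\bbk$. Disconnectedness of the fibre would force a nontrivial decomposition of $A := \mK[Y]^{-1}\mK[X]\otimes_{\mK(Y)} L$ over a Galois extension $L/\mK(Y)$ into indecomposable ideals $A_1\oplus\cdots\oplus A_m$, $m>1$; since $\mK[Y]^{-1}\mK[X] = A^\Gamma$ is a domain, the Galois group $\Gamma$ permutes the $A_i$ transitively, and taking $\Delta = \mathrm{Stab}(A_1)$ one finds a proper finite extension $\mK(Y)\subsetneq L^\Delta \subset \mK[Y]^{-1}\mK[X]$, hence an element of $\mK[X]$ (after clearing a denominator from $\mK[Y]$) that is algebraic over $\mK(Y)$ yet outside $\mK(Y)$ --- contradicting the hypothesis. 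This argument never passes through irreducibility. To repair your write-up you would need to replace the entire ($\Leftarrow$) half with something of this flavour (or another direct argument about connected components of the fibre) rather than trying to upgrade the hypothesis to an algebraic closure statement at the level of fraction fields.
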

\begin{proof} 
Suppose first that $\bbk[Y]$ is algebraically closed in $\bbk[X]$.
The algebra $\bbk[Y]$ is finitely generated  by the assumptions on $Y$. 
Let us choose a finite set of generators and let
$\mK\subset\bbk$ be a subfield generated by their coefficients. 
Then  $\varphi$ is defined over $\mK$. 

In this proof we say that a point $y\in Y$ is {\it generic}
if the corresponding map $y:\enskip \mK[Y]\to\bbk$  is 
a monomorphism. 
Informally speaking, 
being generic means that the coordinates of $y$ are very  
transcendental elements of $\bbk$ with respect 
to the subfield $\mK$.  
These generic $y$'s form a dense, not necessary  
open, subset. Since the points 
$u\in Y$ such that $\varphi^{-1}(u)$ is connected 
form a closed subset,  is suffices to 
prove that $\varphi^{-1}(y)$ is connected
for each generic $y$.  

Suppose  $y$ is generic in the above sense. Then 
$$
\bbk[\varphi^{-1}(y)]=\mK[X]\otimes_{\mK[Y]}\bbk=\mK[Y]^{-1}\mK[X]\otimes_{\mK(Y)}\bbk,
$$
where $\mK[Y]$ is embedded into $\bbk$ by $y$ and the last equality holds because 
all elements of $\mK[Y]$ are invertible. 

Note that 
a $\mK(Y)$-algebra $\mK[Y]^{-1}\mK[X]$ 
contains no zero-divisors. (Indeed, if $pq=0$ in $\mK[Y]^{-1}\mK[X]$, 
then multiplying $p$ and $q$ by suitable {\it invertible} elements of 
$\mK[Y]$, we may assume that $p,q\in \mK[X]$. Hence either $p$ or $q$ 
is zero.) This property might not be preserved by the field extension 
$\mK\subset\bbk$. Nevertheless, 
there are no nilpotent elements in $\bbk[\varphi^{-1}(y)]$. In other words, 
a generic fibre is reduced.  If the fibre over $y$ is not connected, then over 
some Galois extension $\mK(Y)\subset L$, the algebra 
$A:=\mK[Y]^{-1}\mK[X]\otimes_{\mK(Y)} L$ decomposes into a direct sum of
indecomposable  ideals 
$$
A=A_1\oplus\ldots \oplus A_m  \qquad \qquad \text{ with } \ m>1.
$$
Let $\Gamma$ be the Galois group of the extension $\mK(Y)\subset L$. 
Then $\mK[Y]^{-1}\mK[X]=A^{\Gamma}$. 
Since this algebra contains no 
zero-divisors, it could not be a dierct sum of 
two non-trivial ideals. 
On the other hand, each $\Gamma$-orbit in the set of ideals
$A_i$ gives rise to an ideal of $A^{\Gamma}$.
Therefore $\Gamma$ acts transitively on the set $\{A_i\mid i=1,\ldots,m\}$. 
Let $\Delta\subset\Gamma$ 
be the normaliser of $A_1$. Note that $|\Gamma/\Delta|=m$, 
hence $\Delta$ is a proper subgroup. 

Choose a subset $\{\gamma_2,\ldots,\gamma_m\}\subset\Gamma$ such that 
$A_i=\gamma_i{\cdot} A_1$. If $a\in A^{\Gamma}$, then 
$a=(a_1,\gamma_2{\cdot}a_1,\ldots,\gamma_m{\cdot}a_1)$, 
where $a_1\in A_1^{\Delta}$. Thus 
$\mK[Y]^{-1}\mK[X] \cong A_{1}^{\Delta}$. 
The field $L$ is embedded into $A_1$ and into any of 
the other ideals. Threfore $L^{\Delta}$ is embedded into 
$A_1^{\Delta}$.   
We get a  non-trivial extension of $\mK(Y)$, 
which is contained in $\mK[Y]^{-1}\mK[X]$, i.e.,
$\mK(Y) \subset L^{\Delta} \subset \mK[Y]^{-1}\mK[X]$.
This means that neither $\mK[Y]$ nor $\bbk[Y]$ is algebraically 
closed in $\mK[X]$ or $\bbk[X]$, respectively.   
A contradiction! 

Now suppose that  there exists  $f\in\bbk[X]$, which
is algebraic over $\bbk(Y)$, but is not an element of   
$\bbk(Y)$. Then
there is an open subset $U\subset Y$ such that 
$f$ takes a finite number of values, more than one, 
on each fibre $\varphi^{-1}(y)$ with $y\in U$. 
These values correspond to distinct connected components of 
$\varphi^{-1}(y)$. 
\end{proof}

\begin{rmk}\label{fconv}
Generic fibres of $\varphi$ are irreducible if and only if 
the field $\bbk(Y)$ is algebraically closed in the field $\bbk(X)$,
see e.g. \cite[Chapter~2, Section~6.1]{Shaf}. In case $X$ and 
$Y$ are normal, 
connectedness of generic fibres implies irreducibility, see 
\cite[Proposition~4]{Vanya}. 
In general, this is not true.  
\end{rmk}

Here is an example taken form \cite{Vanya} 
of a dominant morphism with connected 
but reducible generic fibres. 

\begin{ex}Let $X\subset{\mathbb A}^3_{\bbk}$ be the irreducible hypersurface 
defined by the equation $x^2=y^2z$. 
Consider the morphism from $X$ to $Y=\bbk$ given by 
$(x,y,z)\mapsto z$.  
For any $c\ne 0$, the fibre over  $c\in\bbk$  consists of two intersecting lines.
Hence it is connected and reducible. 
The set of intersection points $(0,0,z)$ coincides with the 
singular locus of $X$. Evidently, $X$ is not normal. 
\end{ex}

\begin{thebibliography}{3}

\bibitem{Vanya}
{\rusc I.V.\,Arzhancev}, {\rus O de{\u\i}stviyah reduktivnyh grupp
s odnoparametricheskim seme{\u\i}stvom sferi\-cheskih orbit}, 
{\rusi Mat. Sbornik}, {\bf 188}\,(1997), {\rus N0}\,5,
3--20 (Russian). English
translation: {\sc I.V.\,Arzhantsev}, 
On the actions of reductive groups with a one-parameter family of spherical orbits,
{\it Sb. Math.}, {\bf 188}\,(1997), no.\,5, 639--655.
 
\bibitem{bb}
{\sc J.\,\,Brown, J.\,\,Brundan},
Elementary invariants for centralisers of nilpotent matrices, \\
{\tt arXiv:math.RA/0611024}.

\bibitem{cm}
{\sc D.\,Collingwood} and {W.\,McGovern}, ``Nilpotent orbits in
semisimple Lie algebras'', Mathematics Series, Van Nostrand Reinhold, 1993.

\bibitem{cush-rob}
{\sc R.~Cushman} and {\sc  M.~Roberts},
Poisson structures transverse to coadjoint orbits, 
{\it Bull. Sci. Math.}  {\bf 126}(2002), 525--534.

\bibitem{gg}  {\sc W.L.~Gan} and {\sc V.~Ginzburg},
Quantization of Slodowy slices, {\it Intern. Math. Res. Notices},
{\bf 5}\,(2002), 243--255.

\bibitem{gin}
{\sc V.\,Ginzburg}, 
Principal nilpotent pairs in a semisimple Lie algebra. I,
{\it Invent. Math.}, {\bf 140}\,(2000), no.\,3,
511--561.

\bibitem{graaf}
{\sc W. de Graaf},
Computing with nilpotent orbits in simple Lie algebras of exceptional type, \\
{\tt http://arXiv.org/math.GR/0702193}.

\bibitem{ja}
{\sc J.C.\,Jantzen}, Nilpotent orbits in representation theory, in:
B.\,Orsted (ed.), ``Representation and Lie theory'', Progr. in
Math., {\bf 228}, 1--211, Birkh\"auser, Boston 2004.

\bibitem{some}
{\sc V.G.\,Kac}, Some remarks on nilpotent orbits, 
{\it J. Algebra}, {\bf 64}\,(1980), no.\,1, 190--213. 

\bibitem{kost63}
{\sc B.\,Kostant}, 
Lie groups representations on polynomial rings,
{\it Amer. J. Math.},  {\bf 85}\,(1963), 327--404. 

\bibitem{kur}
{\sc J.F.\,Kurtzke}, Centralizers of irregular elements in reductive algebraic groups,
{\it Pacific J. Math.}, {\bf 104}\,(1983), no.\,1, 133--154.

\bibitem{mus}
{\sc M.\,Must{\u{a}}{\c{t}}a}, 
Jet schemes of locally complete intersection canonical
singularities 
(with an appendix by David Eisenbud and Edward Frenkel),
{\it Invent. Math.},
{\bf 145}\,(2001), no.\,3, 397--424.

\bibitem{NS}
{\sc M.G.~Neubauer} and {\sc B.A.~Sethuraman}, Commuting
Pairs in the Centralizers of 2-Regular Matrices, {\it J.
Algebra}, {\bf 214}\,(1999), no. 1, 174--181.

\bibitem{oomsb}
{\sc A.I.~Ooms} and {\sc M. Van den Bergh},
A degree inequality for Lie algebras with a regular Poisson semi-center,
{\tt arXiv:0805.1342v1\,[math.RT].}

\bibitem{Dima}
{\sc D.~Panyushev}, On the coadjoint representation of 
$\mathbb Z_2$-contractions of reductive Lie algebras, 
{\it Adv. Math.}, {\bf 213}\,(2007), no.\,1, 380--404. 

\bibitem{ppy}
{\sc D.~Panyushev, A.~Premet,}  and {\sc O.~Yakimova}, 
On symmetric invariants of centralisers
in reductive Lie algebras, {\it J. Algebra},
{\bf 313}\,(2007), 343--391.

\bibitem{codim3}
{\sc D.~Panyushev} and  {O.~Yakimova},
The argument shift method and maximal commutative subalgebras
of Poisson algebras, {\it Math. Res. Letters}, 
{\bf 15}\,(2008), no.\,2, 239--249.

\bibitem{Rich-com}
{\sc R.\,Richardson}, Commuting varieties of semisimple Lie algebras
and algebraic groups, {\it Compositio Math.}, {\bf 38} (1979), 311--327.

\bibitem{sek}
{\sc J.~Sekiguchi}, 
A counterexample to a problem on commuting matrices.
{\it Proc. Japan Acad. Ser. A Math. Sci.}, 
{\bf 59}\,(1983), no.\,9, 425--426.

\bibitem{Shaf}
{\sc I.R.\,Shafarevich}, 
Basic algebraic geometry. 1.
Varieties in projective space. Second edition. Translated from the 1988 Russian edition and with notes by Miles Reid. Springer-Verlag, Berlin, 1994.

\bibitem{stein}
{\sc R.\,Steinberg}, 
Conjugacy classes in Algebraic groups, 
Notes by V.\,V.~Deodhar. Lecture Notes in Math., 
{\bf 366}, Springer-Verlag, Berlin-New York, 1974.

\bibitem{int}
{\sc E.B.~Vinberg} and {\sc O.S.~Yakimova}, 
Complete families of commuting functions for coisotropic Hamiltonian actions, 
{\tt  arXiv:math.SG/0511498.}

\bibitem{alan}
{\sc A.\,Weinstein},
The local structure of Poisson manifolds,
{\it J. Differential Geom.}, {\bf 18}\,(1983), no.\,3, 523--557. 

\bibitem{fan}
{\rusc O.~Yakimova}, {\rus Indeks centralizatorov {e1}lementov v
klassicheskikh algebrakh Li}, {\rusi Funkc. analiz i ego prilozh.},
{\bf 40}, {\rus N0}\,1 (2006), 52--64 (Russian). English
translation: {\sc O.~Yakimova}, The centralisers of nilpotent
elements in classical Lie algebras, {\it Funct. Anal. Appl.}, 
{\bf 40} (2006), 42--51.

\end{thebibliography}
\end{document}